\documentclass{article}
\usepackage{graphicx}
\usepackage[left=3cm, top=2cm, right=3cm, bottom=3cm]{geometry}
\usepackage{listings}
\usepackage{float}
\usepackage{amssymb,amsmath,mathtools}
\usepackage{enumerate}
\usepackage[numbers,square,sort]{natbib}
\usepackage{wrapfig}
\usepackage{framed}
\usepackage{tabstackengine}
\usepackage{braket}
\usepackage[nottoc]{tocbibind}\settocbibname{References}
\usepackage{mathrsfs}

\usepackage{indentfirst}
\usepackage{setspace}
\onehalfspacing
\usepackage{bm}

\usepackage{hyperref}
\hypersetup{colorlinks,citecolor=black,linkcolor=black}

\usepackage[dvipsnames]{xcolor}

\usepackage{tikz}
\tikzset{every loop/.style={}}
\usetikzlibrary{arrows,shapes,decorations.markings,automata,backgrounds,petri,bending,calc,angles}
\usepackage{circuitikz}

\usepackage{fancyhdr}
\fancypagestyle{firststyle}
{
	
	\fancyfoot[l]{\footnotesize\textit{Date:} \today}
	\fancyfoot[c]{1}
	\fancyhead{}

}

\usepackage{tikz-cd} 
\tikzset{
    labl/.style={anchor=south, rotate=90, inner sep=.5mm}
}
\tikzcdset{row sep/normal=2.7em}
\tikzcdset{column sep/normal=3.6em}

\usepackage{enumitem}
\setlist[enumerate,1]{label=(\arabic*)}
\setlist[enumerate,2]{label={(\alph*)},ref={(\alph*)}}
\setlist[enumerate,3]{label={(\roman*)},ref={(\roman*)}}
\newlist{steplist}{enumerate}{1}
\setlist[steplist]{label={Step \arabic*:}, ref={Step \arabic*}}

\makeatletter
\newcommand*{\wackyenum}[1]{%
	\expandafter\@wackyenum\csname c@#1\endcsname%
}
\newcommand*{\@wackyenum}[1]{%
	$\ifcase#1\or(1')\or(2')\or(3a')\or(3b')\or(4')%
	\else\@ctrerr\fi$%
}
\AddEnumerateCounter{\wackyenum}{\@wackyenum}{(1')}
\makeatother

\usepackage[utf8]{inputenc}
\usepackage[english]{babel} 

\usepackage{amsthm}
\newtheorem{thm}{Theorem}[section]
\newtheorem{lem}[thm]{Lemma}
\newtheorem{prop}[thm]{Proposition}
\newtheorem{cor}[thm]{Corollary}
\numberwithin{equation}{section}


\theoremstyle{definition}
\newtheorem{defn}[thm]{Definition} 
\newtheorem{remk}[thm]{Remark}
\newtheorem{nota}[thm]{Notation}
\newtheorem{cons}[thm]{Construction}


\newcommand{\cA}{\mathcal{A}}

\newcommand{\cP}{\mathcal{P}}

\newcommand{\fR}{\mathfrak{R}}

\newcommand{\La}{\Lambda}

\newcommand{\hH}{\hat{\mathcal{H}}}

\newcommand{\acts}{\curvearrowright}

\newcommand{\Hull}{\operatorname{Hull}}

\newcommand{\link}{\operatorname{link}}

\newcommand{\wt}[1]{\widetilde{#1}}
\newcommand{\ti}[1]{\tilde{#1}}

\makeatletter
\newsavebox{\@brx}
\newcommand{\llangle}[1][]{\savebox{\@brx}{\(\m@th{#1\langle}\)}%
	\mathopen{\copy\@brx\kern-0.5\wd\@brx\usebox{\@brx}}}
\newcommand{\rrangle}[1][]{\savebox{\@brx}{\(\m@th{#1\rangle}\)}%
	\mathclose{\copy\@brx\kern-0.5\wd\@brx\usebox{\@brx}}}
\makeatother

\begin{document}
\begin{center}
{\LARGE\bf
Product separability for special cube complexes}\\
\bigskip
\bigskip
{\large Sam Shepherd\footnote{The author was supported by the CRC 1442 Geometry: Deformations and Rigidity, and by the Mathematics Münster Cluster of Excellence}}
\end{center}

\begin{abstract}
	We prove that if a group $G$ admits a virtually special action on a CAT(0) cube complex, then any product of convex-cocompact subgroups of $G$ is separable. Previously, this was only known for products of three subgroups, or in the case where $G$ is hyperbolic, or in some other more technical cases with additional assumptions on the subgroups (plus these previous results assume that the action of $G$ is cocompact).
	We also provide an application to the action of a virtually special cubulated group on its contact graph (and to some other actions of cubulated groups on graphs).
\end{abstract}
\tableofcontents

\bigskip
\section{Introduction}

The main theorem of this paper is as follows.

\begin{thm}\label{thm:prodsep}
	Let $G\acts \wt{X}$ be a virtually special action of a group $G$ on a CAT(0) cube complex $\wt{X}$, and let $K_1,K_2,\dots,K_n<G$ be convex-cocompact subgroups. Then the product $K_1K_2\cdots K_n$ is separable in $G$.
\end{thm}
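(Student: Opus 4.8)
\emph{Proof strategy.} I would argue by induction on $n$, the case $n\le 1$ being the separability of convex-cocompact subgroups of virtually special groups (Haglund--Wise, via canonical completion and retraction). Two preliminary reductions come first. A descent argument lets one assume the action is free and special: for a finite-index normal subgroup $G_0\trianglelefteq G$ the profinite topologies of $G$ and $G_0$ agree on subsets of $G_0$, and the trace $gK_1\cdots K_n\cap G_0$ can be rewritten as a finite union of translates of products of convex-cocompact subgroups of $G_0$ (sort the elements $k_1\cdots k_n$ by which coset of $G_0$ they lie in --- a short computation, cleanest with $G_0$ normal), so it is enough to treat $G_0$. Hence assume $G=\pi_1(X)$ for a special cube complex $X$ with universal cover $\wt{X}$, each $K_j$ carried by a compact locally convex subcomplex $Y_j=\wt{Y}_j/K_j$ for a convex $K_j$-cocompact $\wt{Y}_j\subseteq\wt{X}$; it is convenient also to prove the mildly more general statement that every multi-coset $K_1g_1\cdots K_ng_n$ is separable. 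I would then record the combinatorial content of membership: $g\in K_1\cdots K_n$ exactly when there is a chain $1=h_0,h_1,\dots,h_n=g$ with every $h_{j-1}^{-1}h_j\in K_j$, equivalently when the translates $h_0\wt{Y}_1,\dots,h_{n-1}\wt{Y}_n$ form a sequence of convex subcomplexes with consecutive members intersecting, running from a basepoint in $\wt{Y}_1$ to the point $g\cdot v$.

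For the inductive step, write $K_1\cdots K_n=K_1\cdot S$ with $S=K_2\cdots K_n$, which is separable by the inductive hypothesis, and observe that $g\in K_1S$ if and only if $K_1\cap gS^{-1}\neq\emptyset$, where $S^{-1}=K_n\cdots K_2$ is again a product of convex-cocompact subgroups. Coarsely, $K_1$ is represented by the convex subcomplex $\wt{Y}_1$ and $gS^{-1}$ by a $G$-translate of the ``multi-cylinder'' got by chaining together the convex subcomplexes coming from $K_n,\dots,K_2$; non-membership says precisely that $\wt{Y}_1$ and this translated multi-cylinder are separated in $\wt{X}$ --- they lie on opposite sides of a finite pattern of hyperplanes produced by their gates. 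The goal is then to pass to a finite cover $\hat X\to X$, i.e.\ to a finite-index normal subgroup $N\trianglelefteq G$, in which the separating hyperplane pattern survives, so that the separation descends to $G/N$ and hence $g\notin K_1\cdots K_n\cdot N$, as required.

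To construct such an $\hat X$ I would combine two ingredients. Rigidity: for a single convex subcomplex, canonical completion and retraction is exactly the device that upgrades ``separated by a wall in $\wt{X}$'' to ``separated in a finite cover of $X$'' (the cubical form of the classical separability-via-geometry argument, as in the proof that convex-cocompact subgroups are separable); applying it to the local isometries $Y_n\to X,\ Y_{n-1}\to X,\dots$, and to the local isometries carrying the convex-cocompact intersection subgroups $K_1\cap K_2^{a_2}\cap\cdots$ that occur (these are convex-cocompact by the gate-and-bridge calculus for families of convex subcomplexes, so the base case and inductive hypothesis apply to them as well), then pulling the resulting finite covers back to a common one, rigidifies the separating pattern link by link, so that a chain from $1$ to $g$ in $G/N$ forces a chain in $\wt{X}$. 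Finiteness: by bounded packing of convex-cocompact subgroups (Hruska--Wise), only finitely many translates of the $\wt{Y}_j$ come uniformly close to $\wt{Y}_1$ and to one another, so only finitely many shapes of multi-cylinder --- hence finitely many covers --- are relevant, and $\hat X$ may be taken to be their common refinement.

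The technical core, and the step I expect to be the main obstacle, is precisely this marriage of rigidity and finiteness: one has to track, link by link along the multi-cylinder, exactly which coset ambiguities the canonical retractions kill and which they leave, and arrange the bookkeeping so that the failure of membership in $\wt{X}$ genuinely propagates to $G/N$. In the hyperbolic setting (Minasyan) the analogous bookkeeping is organised by thin triangles and geodesics; here it must run instead through the gate-and-bridge geometry of convex subcomplexes, which accounts for the extra difficulty. A further, genuine difficulty --- responsible for the improvement over the previously known cocompact case --- is that $G\acts\wt{X}$ is not assumed cocompact, so the canonical-completion and bounded-packing inputs, usually set up for cocompact actions, must be deployed with care, localised to bounded neighbourhoods of the finitely many relevant convex subcomplexes and their intersections, with a check that separation detected there is not destroyed globally.
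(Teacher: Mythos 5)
Your proposal correctly identifies the high-level skeleton: reduce to a free action with special quotient via a finite-index normal subgroup (the paper does this in Lemma \ref{lem:sepfiniteindex}); translate non-membership $g\notin K_1\cdots K_n$ into a geometric statement about chains of translated convex subcomplexes $h_0\wt{Y}_1,\dots,h_{n-1}\wt{Y}_n$ (the paper formalises exactly this as ``routes'' and proves the equivalence in Proposition \ref{prop:routesep}); and use canonical completion and retraction as the basic cover-building device (the paper's Walker--Imitator construction is the same tool in different clothing). You also rightly flag non-cocompactness as a real issue; the paper resolves it by pushing a finite route forward into a compact sub-Salvetti complex $X_\Lambda$ in the proof of Theorem \ref{thm:routesext}.

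However, there is a genuine gap precisely where you say ``the technical core, and the step I expect to be the main obstacle, is.'' You propose that applying canonical completion/retraction to the local isometries $Y_n\to X, Y_{n-1}\to X,\dots$ and pulling back ``rigidifies the separating pattern link by link.'' This is exactly what does \emph{not} work in general, and the paper's main contribution is circumventing this failure. Concretely: when the walker traverses a path along the route (through $Y_1,\dots,Y_n$), the imitator wandering in $Y_1$ can be dragged out of $Y_1\cap Y_2$ by crossing a hyperplane of $Y_1$ that happens to also meet $Y_3,\dots,Y_{n-1}$; there is then no a priori reason the imitator fails to return to the basepoint. Canonical completion against a single $Y_i$ does not prevent this because the offending hyperplane need not osculate $Y_i$. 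The paper's property (Trap) (Definition \ref{defn:trap}) is precisely the hypothesis under which the imitator argument (Lemma \ref{lem:routetrap}) succeeds, and the bulk of the paper (Section \ref{sec:control}) is the reduction to a route satisfying (Trap), via the replacement $\fR\rightsquigarrow\fR_{\mathrm{proj}}$ using gates $\Pi_{\wt{Y}_1}(\wt{Y}_j)$ and orthogonal complements (Propositions \ref{prop:projection} and \ref{prop:orthcomp}), together with a double induction on $n$ and on the complexities $\kappa_j$. None of that machinery appears in your sketch, and nothing in the proposal substitutes for it.

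A second issue: the finiteness you invoke via Hruska--Wise bounded packing is neither available in the required generality (the action is not assumed cocompact, and without further hypotheses bounded packing of convex-cocompact subgroups can fail) nor the finiteness the argument actually needs. The paper's finiteness comes from the route being finite (the $Y_i$ are compact) and from each finite cover having only finitely many elevations of a given $Y_i$; no packing statement is used. Finally, the split $K_1\cdots K_n = K_1\cdot S$ and ``$g\in K_1 S\iff K_1\cap gS^{-1}\neq\emptyset$'' does not straightforwardly reduce to the inductive hypothesis --- separability of $S$ does not directly give separability of $K_1 S$ --- and the paper instead keeps all $n$ factors symmetric in the route formulation rather than peeling off $K_1$.
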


Here, an action $G\acts \wt{X}$ on a CAT(0) cube complex is \emph{virtually special} if there exists a finite-index subgroup $G'<G$ acting freely on $\wt{X}$, with the quotient $\wt{X}/G'$ a special cube complex (in this paper actions on cube complexes will always be by cubical automorphisms).
A \emph{convex-cocompact subgroup} of $G$ is a subgroup that stabilizes and acts cocompactly on a convex subcomplex of $\wt{X}$.
A subset $S$ of a group $G$ is \emph{separable} if, for any $g\in G- S$, there is a homomorphism $\phi:G\to F$ to a finite group $F$ with $\phi(g)\notin\phi(S)$.

If $G\acts \wt{X}$ is a proper and cocompact action on a CAT(0) cube complex, then we call  $G\acts \wt{X}$ a \emph{cubulated group}. We note that in this case it is often easier to construct convex-cocompact subgroups, for instance any hyperplane stabilizer will be convex-cocompact.

The separability of convex-cocompact subgroups of a virtually special cubulated group $G\acts \wt{X}$ is one of the cornerstones of the theory of special cube complexes, and was proved by Haglund and Wise in their first paper on special cube complexes \cite[Corollary 7.9]{HaglundWise08}.
Theorem \ref{thm:prodsep} is a natural generalisation of this result.
If $G$ is hyperbolic and cubulated, then the cubulation is virtually special by \cite{Agol13}, so in this case one deduces that every quasiconvex subgroup of $G$ is separable (a subgroup is quasiconvex if and only if it is convex-cocompact with respect to the cubulation by \cite[Proposition 7.2]{HaglundWise08}).
An astonishing array of groups have virtually special cubulations, including finite volume hyperbolic 3-manifold groups, hyperbolic free-by-cyclic groups and many small cancellation groups \cite{Agol13,WiseQCH,HagenWise15,Wise04}, thus providing many examples of groups with strong separability properties.
In fact, for finite-volume hyperbolic 3-manifold groups, the separability of quasiconvex subgroups provided the critical final step in the resolution of the Virtual Haken and Virtual Fibering conjectures \cite[\S9]{Agol13}.

The separability of a product of two convex-cocompact subgroups has also played an important role in the theory.
Haglund and Wise proved that a cubulated group $G\acts \wt{X}$ is virtually special if and only if the hyperplane subgroups and products of two hyperplane subgroups (corresponding to pairs of intersecting hyperplanes) are separable in $G$ \cite[Theorem 9.19]{HaglundWise08}.
Furthermore, Reyes proved that if $G\acts \wt{X}$ is a virtually special cubulated group then the product of any two convex-cocompact subgroups is separable in $G$ \cite[Theorem A.1]{OregonReyes23}. This was one of the ingredients in Reyes' theorem about virtual specialness of relatively hyperbolic cubulated groups \cite[Theorem 1.2]{OregonReyes23}.
Going another step further, separability of products of three convex-cocompact subgroups was proved by the author in \cite[Proposition 4.13]{Shepherd23}, and this was a key tool in the proof of \cite[Theorem 1.2]{Shepherd23} about commanding convex elements.

The separability of products of arbitrarily many subgroups has also been studied in certain contexts with ``hyperbolic-like'' behaviour. Ribes--Zalesskii proved separability for any product of finitely generated subgroups of a finite-rank free group \cite{RibesZalesskii93}.
Minasyan proved separability for any product of quasiconvex subgroups in a QCERF hyperbolic group \cite{Minasyan06}, which applies to any hyperbolic cubulated group.
This was generalized to the relatively hyperbolic setting by McClellan \cite{McClellan19} and Minasyan--Mineh \cite{MinasyanMineh25}.
In particular, \cite[Corollary 4.5]{McClellan19} combined with \cite[Theorem 1.1]{SageevWise15} implies separability for any product of full relatively quasiconvex subgroups in a relatively hyperbolic virtually special cubulated group.
One can also drop the fullness assumption in some situations, for example combining \cite[Theorem 1.8]{MinasyanMineh25} with \cite[Corollary 6.4]{SageevWise15} implies separability for any product of relatively quasiconvex subgroups in a cubulated group which is hyperbolic relative to virtually abelian subgroups (virtual specialness follows from \cite[Corollary 1.3]{OregonReyes23} in this case).
In a slightly different direction, Mineh--Spriano showed separability for any product of stable subgroups in a virtually special cubulated group \cite{MinehSpriano24} -- in this case the subgroups exhibit ``hyperbolic-like'' behaviour but the ambient group might not.
Combining Theorem \ref{thm:prodsep} with \cite[Proposition 4.2]{Genevois19}, we obtain the following generalization of Mineh--Spriano's result (in fact this also generalizes the result about full relatively quasiconvex subgroups mentioned above).

\begin{cor}\label{cor:relhyp}
	Let $G\acts \wt{X}$ be a virtually special cubulated group, and let $K_1,K_2,\dots,K_n<G$ be Morse subgroups. Then the product $K_1K_2\cdots K_n$ is separable in $G$.
\end{cor}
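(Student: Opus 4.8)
The plan is to deduce Corollary \ref{cor:relhyp} directly from Theorem \ref{thm:prodsep}; the only work is to verify that each $K_i$ is convex-cocompact with respect to the given action $G\acts\wt{X}$. Since $G\acts\wt{X}$ is a cubulated group (proper and cocompact) and $G$ is hyperbolic relative to $\cP$, I would apply \cite[Theorem 1.1]{SageevWise15}, which shows that a relatively quasiconvex subgroup $H<G$ stabilizes and acts cosparsely on a convex subcomplex $\wt{Y}_H\subseteq\wt{X}$, and moreover that this action is cocompact precisely when $H$ is full. Applying this to $K_1,\dots,K_n$ yields that each $K_i$ is a convex-cocompact subgroup of $G$.

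With convex-cocompactness of the $K_i$ in hand, Theorem \ref{thm:prodsep} applies verbatim and gives that $K_1K_2\cdots K_n$ is separable in $G$, which completes the proof. (Alternatively, as noted above, one can combine the convex-cocompactness provided by \cite[Theorem 1.1]{SageevWise15} with the separability of convex-cocompact subgroups of virtually special cubulated groups \cite[Corollary 7.9]{HaglundWise08} and with \cite[Corollary 4.5]{McClellan19} to reach the same conclusion.)

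The main point requiring care is the translation between the relatively hyperbolic vocabulary and the cube-complex vocabulary: one must confirm that ``full relatively quasiconvex'' is exactly the hypothesis under which \cite[Theorem 1.1]{SageevWise15} upgrades a cosparse action on a convex subcomplex to a genuinely cocompact one, and that no assumption beyond properness and cocompactness of $G\acts\wt{X}$ (in particular, \emph{not} virtual specialness) is needed for that step --- virtual specialness enters only through the invocation of Theorem \ref{thm:prodsep}. Beyond this bookkeeping there is no further obstacle.
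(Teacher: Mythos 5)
Your proposal matches the paper's intended argument exactly: the paper deduces Corollary \ref{cor:relhyp} by combining Theorem \ref{thm:prodsep} with \cite[Theorem 1.1]{SageevWise15}, which shows that full relatively quasiconvex subgroups are convex-cocompact, and it also notes the same alternative route via \cite[Corollary 4.5]{McClellan19} and separability of convex-cocompact subgroups that you mention. Both the main argument and the alternative you sketch are precisely what the paper has in mind, so there is nothing to add.
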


Another important class of examples for Theorem \ref{thm:prodsep} is the class of (finitely generated) right-angled Artin and right-angled Coxeter groups. These are virtually special cubulated groups with respect to their standard actions on CAT(0) cube complexes.
Moreover, the previous product separability results from the literature discussed above only apply to some right-angled Artin and Coxeter groups and/or some subgroups of them.
Additionally, Niblo--Reeves showed that any Coxeter group $G$ (not necessarily right-angled) admits a proper action on a locally finite CAT(0) cube complex \cite{NibloReeves03}, and Haglund--Wise showed that this action is virtually special \cite{HaglundWise10}, so Theorem \ref{thm:prodsep} applies here as well.

\subsection{Separability with respect to group actions}

Separability properties are often applied via group actions -- indeed this is the case for many of the applications discussed earlier in the introduction.
One can also formulate a general definition of separability in the context of group actions as follows.
Given an action of a group $G$ on a set $A$, we say that a subset $B\subset A$ is \emph{$G$-separable} if for any $a\in A-B$ there exists a finite-index subgroup $\hat{G}<G$ such that $\hat{G}\cdot a \cap B=\emptyset$.

Naturally, there is a close connection between separability of subsets of $G$ and $G$-separability of subsets of $A$.
For instance, if we consider the action of $G$ on itself by left multiplication, then a subset of $G$ is separable if and only if it is $G$-separable (see the version of separability in Lemma \ref{lem:sepequiv}\ref{it:sep3}).
More generally, if $G$ acts on a set $A$ with finite points stabilisers, then $G$ is residually finite if and only if every finite subset of $A$ is $G$-separable.
Subgroup separability also plays a natural role: for an arbitrary action of $G$ on a set $A$, a singleton subset $\{a\}\subset A$ is $G$-separable if and only if the stabiliser $G_a$ is separable.
Furthermore, in the case where $A$ is a graph with cocompact $G$-action, we have a relation between product separability in $G$ and $G$-separability of balls in $A$, as we demonstrate with the following proposition.

\begin{prop}\label{prop:ballsep}
	Let $G$ act cocompactly on a graph $X$. Suppose that any product of vertex stabilisers is separable in $G$.
	Then for any $x\in VX$ and any integer $R\geq0$, the ball
	$$B_R(x)=\{y\in VX\mid d(x,y)\leq R\}$$
	is $G$-separable.
\end{prop}

We then obtain the following as a corollary of Theorem \ref{thm:prodsep} and Proposition \ref{prop:ballsep}.
See Section \ref{sec:actions} for further explanation.

\begin{cor}\label{cor:contact}
	Let $G\acts X$ be one of the following actions:
	\begin{enumerate}
		\item\label{it:contact} The action of a virtually special cubulated group on its contact graph \cite{Hagen14,BehrstockHagenSisto17}.
		\item\label{it:extension} The action of a finitely generated right-angled Artin group on its extension graph $\Gamma^e$ or clique graph $\Gamma^e_k$ \cite{KimKoberda13}.
		\item\label{it:JSJ} The action of a one-ended hyperbolic cubulated group on its canonical JSJ tree \cite{Bowditch98}.
		\item\label{it:coned} The action of a virtually special cubulated group on its coned-off Cayley graph with respect to a given convex-cocompact subgroup and finite generating set \cite{Farb98} (we make no assumption of relative hyperbolicity here).
	\end{enumerate}
Then for any $x\in VX$ and any integer $R\geq0$, the ball $B_R(x)$ is $G$-separable.
\end{cor}

\subsection{Ideas in the proof}

We begin by remarking that most of the previous results on product separability exploit some form of ``hyperbolic-like'' behaviour. Theorem \ref{thm:prodsep} has no such behaviour present, so the methods are necessarily quite different. Instead, the proof builds on the cubical methods developed in the author's earlier work \cite{Shepherd23}, where separability for products of three convex-cocompact subgroups was proved.
However, going from three to four (and more) subgroups turned out to be quite challenging, and required a number of additional ideas and techniques.

The first step is to reformulate Theorem \ref{thm:prodsep} as a more geometric statement. We do this with the notion of a \emph{route}. Given a non-positively curved cube complex $X$, a \emph{route in $X$ of length $n$} is a sequence of $n$ locally convex subcomplexes in $X$ such that consecutive subcomplexes overlap (this is a slightly simplified version of Definition \ref{defn:routes}, but will suffice for the purposes of the introduction). We choose basepoints in each of these overlaps, plus additional basepoints in the first and last subcomplexes -- which we call the \emph{initial vertex} and \emph{terminal vertex} respectively. A route is \emph{closed} if the initial and terminal vertices are the same, and it is \emph{finite} if the subcomplexes are finite.
A closed route $\fR$ is called \emph{essential} if every path along $\fR$ (i.e. a path with one segment in each of the subcomplexes of $\fR$) is essential.
One can define \emph{elevations} of routes to any cover $\hat{X}\to X$, in analogy with elevations of subcomplexes. 
We can then show that Theorem \ref{thm:prodsep} reduces to the following theorem about elevations of routes.

\theoremstyle{plain}
\newtheorem*{thm:routesext}{Theorem \ref{thm:routesext}}
\begin{thm:routesext}
Let $X$ be a virtually special cube complex.
For any finite essential closed route $\fR$ in $X$, there exists a finite-sheeted cover $\hat{X}\to X$ with no closed elevations of $\fR$. 
\end{thm:routesext}

One of the main tools in the proof of Theorem \ref{thm:routesext} is the Walker and Imitator construction, which was introduced in \cite{Shepherd23}. The construction is essentially equivalent to the canonical completion and retraction of Haglund--Wise \cite{HaglundWise08}, but is more helpful for facilitating certain geometric arguments.
The basic idea is as follows: we have a locally convex subcomplex $Y$ of a finite directly special cube complex $X$ (direct specialness is a slight strengthening of specialness, see Definition \ref{defn:special}), and the walker and imitator wander around the 1-skeleta of $X$ and $Y$ respectively. However, at each step the imitator tries to copy the walker by traversing an edge parallel to the edge traversed by the walker (if not possible, the imitator stays still). It turns out that one can use this setup to construct a finite cover of $X$ with certain properties (see Section \ref{sec:walker} for more details).
In the context of Theorem \ref{thm:routesext} (after reducing to the case where $X$ is finite and directly special), we take $Y$ to be the first subcomplex of the route $\fR$.
We then show that the cover produced by the Walker--Imitator construction satisfies the conclusion of Theorem \ref{thm:routesext} provided that the walker--imitator movements satisfy the following property: if the walker and imitator start together at the initial vertex of $\fR$ and the walker traverses a path along $\fR$, then the imitator does not end up back at the initial vertex.

The walker--imitator property above is satisfied if the route $\fR$ and the hyperplanes of $X$ have certain geometric properties (see Lemma \ref{lem:routetrap}). For routes of length three it is not too hard to reduce to the case where these properties hold, but it becomes much harder for routes of length four and above. This is the source of difficulty for going from three to four subgroups in Theorem \ref{thm:prodsep}, as mentioned earlier.
A large part of this paper is thus devoted to reducing Theorem \ref{thm:routesext} to the case where $\fR$ does satisfy the required properties.
This reduction involves a careful inductive argument, inducting on the length of $\fR$ as well as on several other quantities (this argument takes place across Section \ref{sec:control}).

In one crucial part of the above reduction argument, we take an elevation $\wt{\fR}$ of $\fR$ to the universal cover $\wt{X}$ of $X$, modify $\wt{\fR}$, and then descend it back down to produce a new route in $X$.
The modification of $\wt{\fR}$ involves projecting certain subcomplexes of $\wt{\fR}$ to a certain subcomplex of $\wt{X}$. Here we exploit the theory of projection maps and product subcomplexes in CAT(0) cube complexes.
However, in addition to the existing theory, we require the following two propositions, which we believe to be of independent interest.
In what follows, $\Pi_{\wt{A}}$ denotes the projection map to a subcomplex $\wt{A}$ of a CAT(0) cube complex. See Sections \ref{sec:prelims} and \ref{sec:projections} for the relevant definitions.

\theoremstyle{plain}
\newtheorem*{prop:projection}{Proposition \ref{prop:projection}}
\begin{prop:projection}
	Let $A$ and $B$ be locally convex subcomplexes in a weakly special cube complex $X$, and let $G=\pi_1(X)$. Let $\wt{X}\to X$ be the universal cover and let $\wt{A},\wt{B}\subset\wt{X}$ be elevations of $A$ and $B$. Let $\wt{C}=\Pi_{\wt{A}}(\wt{B})$.
	Then $G_{\wt{C}}=G_{\wt{A}}\cap G_{\wt{B}}$ and $\wt{C}/G_{\wt{C}}$ embeds in $X$.	 
\end{prop:projection}

The second proposition concerns orthogonal complements in CAT(0) cube complexes, a notion originally due to Hagen--Susse \cite{HagenSusse20}.
In a CAT(0) cube complex $\wt{X}$, the \emph{orthogonal complement} of a convex subcomplex $\wt{A}$ at a vertex $\tilde{x}\in\wt{A}$ is the unique largest convex subcomplex $\wt{B}$ containing $\tilde{x}$ such that there is an isometric embedding $\phi:\wt{A}\times\wt{B}\xhookrightarrow{}\wt{X}$ (which identifies $\wt{A}\times\{\tilde{x}\}$ with $\wt{A}$ and $\{\tilde{x}\}\times\wt{B}$ with $\wt{B}$).
See Section \ref{sec:projections} for more background, including other equivalent definitions of orthogonal complement.

\theoremstyle{plain}
\newtheorem*{prop:orthcomp}{Proposition \ref{prop:orthcomp}}
\begin{prop:orthcomp}
	Let $X$ be a weakly special cube complex and $G=\pi_1(X)$. Let $\wt{X}\to X$ be the universal cover, $\wt{A}\subset\wt{X}$ a convex subcomplex and $\ti{x}\in\wt{A}$ a vertex. Let $\wt{B}$ be the orthogonal complement of $\wt{A}$ at $\ti{x}$. Then:
	\begin{enumerate}
		\item $\wt{B}/G_{\wt{B}}$ embeds in $X$.
		\item $G_{\wt{A}}$ and $G_{\wt{B}}$ commute.
		\item $G_{\wt{A}}$ and $G_{\wt{B}}$ preserve $\wt{P}_{\wt{A}}=\phi(\wt{A}\times\wt{B})$, moreover $G_{\wt{A}}$ (resp. $G_{\wt{B}}$) acts on the $\wt{A}$ factor (resp. $\wt{B}$ factor) in the natural way and it acts trivially on the other factor.
	\end{enumerate}
\end{prop:orthcomp}

We remark that Proposition \ref{prop:orthcomp} is similar to \cite[Corollary 2.13]{Fioravanti23} in the case where $G_{\wt{A}}$ acts on $\wt{A}$ cocompactly.
We also note that a cube complex being virtually weakly special is a weaker notion than being virtually special (see Section \ref{subsec:directlyspecial}), so Propositions \ref{prop:projection} and \ref{prop:orthcomp} are actually stronger than we need for the proof of Theorem \ref{thm:routesext}.

\subsection{Structure of the paper}

In Section \ref{sec:prelims} we provide some preliminaries about cube complexes and separable subsets of groups.

In Section \ref{sec:projections} we study projection maps and orthogonal complements in CAT(0) cube complexes. This is where we prove Propositions \ref{prop:projection} and \ref{prop:orthcomp}.

In Section \ref{sec:routes} we introduce the notion of routes in non-positively curved cube complexes and we establish some elementary lemmas about them. This is also where we prove Proposition \ref{prop:routesep}, which will allow us to deduce Theorem \ref{thm:prodsep} from Theorem \ref{thm:routesext}.

In Section \ref{sec:control} we begin the proof of Theorem \ref{thm:embroutes} (which is a weaker version of Theorem \ref{thm:routesext}). This section comprises a number of lemmas which gradually reduce the theorem to the case where the route $\fR$ satisfies certain properties (which are needed in Section \ref{sec:walker}).

In Section \ref{sec:walker} we recall the Walker and Imitator construction, which, together with the properties granted by Section \ref{sec:control}, allows us to complete the proof of Theorem \ref{thm:embroutes}. 

In Section \ref{sec:proof} we deduce Theorems \ref{thm:routesext} and \ref{thm:prodsep} from Theorem \ref{thm:embroutes}.

In Section \ref{sec:actions} we prove Proposition \ref{prop:ballsep} and Corollary \ref{cor:contact} regarding separable subsets in group actions.

\textbf{Acknowledgements:}\,
The author would like to thank Lawk Mineh, Mark Hagen and Motiejus Valiunas for their helpful comments. The author is also grateful for the comments of the anonymous referee, which in particular led to Corollary \ref{cor:relhyp} being generalized.

\bigskip
\section{Preliminaries}\label{sec:prelims}

See \cite{HaglundWise08} for the definitions of cube complex, hyperplane and non-positively curved cube complex. In this section we recall some more specialised definitions regarding cube complexes and specialness. We also recall some facts about separable subsets of groups at the end of the section. First some notational conventions.

\begin{nota}
	Let $X$ be a cube complex. 
	By a \emph{path} in $X$ we will always mean an edge path, and a \emph{loop} or \emph{closed path} is a path that starts and finishes at the same vertex.
	In this paper all cube complexes and their covers will be connected unless otherwise stated.
\end{nota}

\subsection{Local isometries and elevations}

\begin{defn}(Local isometries)\\\label{defn:locisom}
	A \emph{local isometry} $\phi:Y\to X$ of non-positively curved cube complexes is a combinatorial map such that each induced map on links of vertices $\link(y)\to\link(\phi(y))$ is an embedding with image a full subcomplex of $\link(\phi(y))$ (a subcomplex $C$ of a simplicial complex $D$ is \emph{full} if any simplex of $D$ whose vertices are in $C$ is in fact entirely contained in $C$). A subcomplex $Y\subset X$ is \emph{locally convex} if the inclusion $Y\xhookrightarrow{}X$ is a local isometry.	
	If $X$ is also CAT(0) then we simply say that $Y$ is \emph{convex} (and this is equivalent to $Y$ being convex in the combinatorial or CAT(0) metrics on $X$).
\end{defn}

\begin{remk}\label{remk:isembedding}
	If $\phi:Y\to X$ is a local isometry with $X$ CAT(0), then $Y$ is also CAT(0) and $\phi$ is an isometric embedding, so $Y$ can be identified with a convex subcomplex of $X$ \cite[Proposition II.4.14]{BridsonHaefliger99}.
\end{remk}

\begin{defn}(Elevations)\\\label{defn:elevations}
	Let $\phi:Y\to X$ be a local isometry of non-positively curved cube complexes and $\mu:\hat{X}\to X$ a cover. We say that a map $\hat{\phi}:\hat{Y}\to\hat{X}$ is an \emph{elevation} of $\phi$ to $\hat{X}$ if there exists a covering $\nu:\hat{Y}\to Y$ fitting into the commutative diagram
	\begin{equation}\label{elevation}
		\begin{tikzcd}[
			ar symbol/.style = {draw=none,"#1" description,sloped},
			isomorphic/.style = {ar symbol={\cong}},
			equals/.style = {ar symbol={=}},
			subset/.style = {ar symbol={\subset}}
			]
			\hat{Y}\ar{d}{\nu}\ar{r}{\hat{\phi}}&\hat{X}\ar{d}{\mu}\\
			Y\ar{r}{\phi}&X,
		\end{tikzcd}
	\end{equation}
	and such that a path in $\hat{Y}$ closes up as a loop if and only if its projections to $Y$ and $\hat{X}$ both close up as loops. The map $\hat{\phi}$ is necessarily a local isometry. Equivalently, $\hat{Y}$ is a component of the pullback of $\phi$ and $\mu$. 
	Often $Y$ will be a subcomplex of $X$, in which case the elevations of $Y$ will simply be the components of the preimage of $Y$ in $\hat{X}$.
\end{defn}

\begin{remk}\label{remk:univelev}
	If $\hat{X}=\wt{X}$ is the universal cover of $X$, then $\hat{Y}=\wt{Y}$ must be simply connected and $\hat{\phi}:\wt{Y}\to\wt{X}$ is an isometric embedding of CAT(0) cube complexes (see Remark \ref{remk:isembedding}). In this case we can consider $\wt{Y}$ to be a convex subcomplex of $\wt{X}$.
	
	If $G=\pi_1(X)$, then we have an action of $G$ on $\wt{X}$ by deck transformations.
	If we fix a map $\phi:Y\to X$ as in Definition \ref{defn:elevations}, then $G$ acts on the set of elevations of $\phi$ to $\wt{X}$, i.e., if $\wt{Y}$ is one elevation of $\phi$ to $\wt{X}$ and $g\in G$, then $g\wt{Y}$ is another elevation of $\phi$ to $\wt{X}$. Moreover, the covering map $g\wt{Y}\to Y$ is just the composition $g\wt{Y}\overset{g^{-1}}{\longrightarrow}\wt{Y}\to Y$.
	Additionally, since $G$ acts transitively on each fibre of $\wt{X}\to X$, the action of $G$ on the set of elevations of $\phi$ is transitive.
\end{remk}

\begin{defn}(Based elevations)\\\label{defn:basedelevations}
	Let $\phi:(Y,y)\to (X,x)$ be a based local isometry of non-positively curved cube complexes and $\mu:(\hat{X},\hat{x})\to (X,x)$ a based cover. We say that a based map $\hat{\phi}:(\hat{Y},\hat{y})\to(\hat{X},\hat{x})$ is the \emph{based elevation} of $\phi$ to $(\hat{X},\hat{x})$ if there exists a based covering $\nu:(\hat{Y},\hat{y})\to (Y,y)$ fitting into the commutative diagram
	\begin{equation}\label{belevation}
		\begin{tikzcd}[
			ar symbol/.style = {draw=none,"#1" description,sloped},
			isomorphic/.style = {ar symbol={\cong}},
			equals/.style = {ar symbol={=}},
			subset/.style = {ar symbol={\subset}}
			]
			(\hat{Y},\hat{y})\ar{d}{\nu}\ar{r}{\hat{\phi}}&(\hat{X},\hat{x})\ar{d}{\mu}\\
			(Y,y)\ar{r}{\phi}&(X,x),
		\end{tikzcd}
	\end{equation}
	and such that a path in $\hat{Y}$ closes up as a loop if and only if its projections to $Y$ and $\hat{X}$ both close up as loops. Equivalently, $\hat{Y}$ is the component of the pullback of $\phi$ and $\mu$ that contains the vertex $(y,\hat{x})$ (and this vertex is identified with $\hat{y}$) -- this perspective also shows why the based elevation is unique.
	If we do not wish to prescribe the basepoint $\hat{x}\in\hat{X}$, then we can also refer to $\hat{\phi}:(\hat{Y},\hat{y})\to(\hat{X},\hat{x})$ as a \emph{based elevation} of $\phi$ to $\hat{X}$.
\end{defn}

\begin{remk}\label{remk:unibasedelev}
	Similarly to Remark \ref{remk:univelev}, if $\hat{X}=\wt{X}$ is the universal cover of $X$, then based elevations can be considered as based convex subcomplexes of $\wt{X}$.
	And if we fix a based map $\phi:(Y,y)\to (X,x)$ as in Definition \ref{defn:basedelevations}, then $G=\pi_1(X)$ acts transitively on the set of based elevations of $\phi$ to $\wt{X}$.
\end{remk}

\subsection{Special cube complexes}\label{subsec:directlyspecial}

\begin{defn}(Parallelism)\\\label{defn:parallelism}
	Let $X$ be a cube complex. Two edges $e_1,e_2$ in $X$ are \emph{elementary parallel} if they appear as opposite edges of some square of $X$. The relation of elementary parallelism generates the equivalence relation of \emph{parallelism}. We write $e_1\parallel e_2$ if edges $e_1$ and $e_2$ are parallel. We define $H(e_1)$ to be the hyperplane dual to an edge $e_1$ -- note that $e_1\parallel e_2$ is equivalent to $H(e_1)=H(e_2)$.
\end{defn}

\begin{defn}(Intersecting and osculating hyperplanes)\\
	Let $X$ be a cube complex. Suppose distinct edges $e_1$ and $e_2$ of $X$ are incident at a vertex $x$.
	\begin{enumerate}
		\item If $e_1$ and $e_2$ form the corner of a square at $x$, then we say that the hyperplanes $H(e_1)$ and $H(e_2)$ \emph{intersect at $(x;e_1,e_2)$}. If in addition $H(e_1)=H(e_2)$, then we say that $H(e_1)$ \emph{self-intersects at $(x;e_1,e_2)$}. Note that a pair of hyperplanes intersect (as subsets of $X$) if and only if they are equal or they intersect at some $(x;e_1,e_2)$.
		\item If $e_1$ and $e_2$ do not form the corner of a square at $x$, then we say that the hyperplanes $H(e_1)$ and $H(e_2)$ \emph{osculate at $(x;e_1,e_2)$}. If in addition $H(e_1)=H(e_2)$, then we say that $H(e_1)$ \emph{self-osculates at $(x;e_1,e_2)$}. Alternatively, if $e_1$ has both its ends incident at $x$, then we say that $H(e_1)$ \emph{self-osculates at $(x;e_1)$}. We say that a pair of hyperplanes \emph{osculate} if they osculate at some $(x;e_1,e_2)$. 
		\item We say that distinct hyperplanes $H_1$ and $H_2$ \emph{inter-osculate} if they both intersect and osculate. 
	\end{enumerate} 
	We will sometimes just say that a pair of hyperplanes intersect or osculate at a vertex $x$ if we do not wish to specify edges $e_1$ and $e_2$.
	The notation from \cite{HaglundWise08} is slightly different from ours as they work with oriented edges and distinguish between direct and indirect self-osculations.
\end{defn}

\begin{defn}(Two-sided hyperplanes)\\\label{defn:twosided}
	A hyperplane $H$ is \emph{two-sided} if the map $H\to X$ extends to a combinatorial map $H\times[-1,1]\to X$ (where we consider $H$ with its induced cube complex structure).
\end{defn}

\begin{defn}(Right-angled Artin groups and Salvetti complexes)\\\label{defn:RAAG}
	Let $\Gamma$ be a simplicial graph (possibly infinite). The \emph{right-angled Artin group} associated to $\Gamma$ is the group given by the following presentation
	$$A_\Gamma=\langle V\Gamma\mid [u,v]:\,(u,v)\in E\Gamma\rangle.$$
	The \emph{Salvetti complex} associated to $\Gamma$, is a non-positively curved cube complex $X_\Gamma$ with fundamental group $A_\Gamma$, constructed as follows.
	The 1-skeleton of $X_\Gamma$ has a single vertex and, for each $v\in V\Gamma$, an oriented edge labelled by $v$.
	Inductively attach higher dimensional cubes as follows.
	For each finite complete subgraph of $\Gamma$ spanned by vertices $\{v_1,\dots,v_n\}$,
	take an $n$-cube $C$, and make a correspondence between the vertices $v_i$ and the parallelism classes of edges in $C$. For the parallelism class corresponding to $v_i$, orient the edges consistently and label them by $v_i$. Then glue each opposite pair of codimension-1 faces of $C$ to the unique $(n-1)$-cube in $X_\Gamma$ with the same edge labels.
	Note that the 2-skeleton of $X_\Gamma$ is the presentation complex of $A_\Gamma$ (with respect to the above presentation).
\end{defn}

\begin{defn}(Notions of specialness)\\\label{defn:special}
	Let $X$ be a non-positively curved cube complex.
	\begin{itemize}
		\item We say that $X$ is \emph{special} if it admits a local isometry to some Salvetti complex of a right-angled Artin group, $X\to X_\Gamma$.
		\item We say that $X$ is \emph{weakly special} if no hyperplane self-intersects or self-osculates.
		\item We say that $X$ is \emph{directly special} if every hyperplane is two-sided, no hyperplane self-intersects or self-osculates, and no pair of hyperplanes inter-osculate.
		\item We say that $X$ is \emph{virtually special} if it has a finite-sheeted special cover.
	\end{itemize}
\end{defn}

\begin{remk}\label{remk:special}
	\begin{itemize}
		\item The definition of special cube complex can be equivalently formulated in terms of intersections and osculations of hyperplanes. In particular, no hyperplane in a special cube complex self-intersects. However, the self-osculation condition requires working with oriented hyperplanes, which will not concern us in this paper. The definition originally appeared in \cite{HaglundWise08} under the name \emph{$A$-special}.
		\item The definition of weakly special cube complex first appeared in \cite{NakagawaTamuraYamashita14} and it also appears in \cite{Huang17,Oh22}.
		\item The definition of directly special cube complex appears in \cite{Huang18}, and directly special implies both special and weakly special.
		\item Among finite non-positively curved cube complexes, the notions of special and directly special are equivalent up to finite covers \cite[Proposition 3.10]{HaglundWise08}. However, it is unknown whether a finite weakly special cube complex is always virtually special (this is almost the same as \cite[Problem 11.1]{HaglundWise08}).
	\end{itemize}
\end{remk}

\subsection{Some more about hyperplanes}

\begin{defn}(Hyperplane carriers)\\\label{defn:carrier}
	Let $H$ be a hyperplane in a directly special cube complex $X$. The \emph{carrier of $H$}, denoted $N(H)$, is the smallest subcomplex of $X$ containing $H$. 
	In fact, the map $H\times[-1,1]\to X$ from Definition \ref{defn:twosided} is an embedding with image $N(H)$.
	Hyperplane carriers in directly special cube complexes are always locally convex subcomplexes.
	If $X$ is not directly special, one can still define the carrier of $H$ as a certain local isometry $N(H)\to X$ (see \cite{WiseRiches}), but this is more delicate and will not concern us.
\end{defn}

\begin{remk}\label{remk:carrierelev}
	If $H$ is a hyperplane in a directly special cube complex $X$, and $\hat{X}\to X$ is a finite cover, then each hyperplane $\hat{H}$ in $\hat{X}$ which maps to $H$ is in fact a cover of $H$.
	From this, one can deduce that we also get a covering map of hyperplane carriers $N(\hat{H})\to N(H)$.
	In particular, $N(\hat{H})$ is an elevation of $N(H)$, and in fact every elevation of $N(H)$ to $\hat{X}$ is of this form.
\end{remk}

The notion of two hyperplanes in a cube complex $X$ intersecting or osculating generalizes to the notion of a hyperplane and a subcomplex intersecting or osculating as follows. This generalization first appeared in \cite[Remark A.9]{OregonReyes23}.

\begin{defn}(Intersections and osculations of hyperplanes with subcomplexes)\\\label{defn:complexosculate}
	Let $Y$ be a locally convex subcomplex of a non-positively curved cube complex $X$. Let $H$ be a hyperplane in $X$.
	\begin{enumerate}
		\item We say that $H$ and $Y$ \emph{osculate at $(y;e)$} if $y\in Y$ is a vertex, $e$ is an edge in $X$ incident to $y$ whose interior lies outside of $Y$, and $H=H(e)$.
		\item We say that $H$ and $Y$ \emph{inter-osculate} if they both intersect and osculate.
	\end{enumerate}
\end{defn}

With the following proposition we can often reduce to working with subcomplexes instead of local isometries, and we can avoid inter-osculations between hyperplanes and subcomplexes.

\begin{prop}\cite[Corollary 4.8]{Shepherd23}\\\label{prop:elevembed}
	Let $Y_1,...,Y_n\to X$ be local isometries of finite virtually special cube complexes. Then there is a finite directly special regular cover $\hat{X}\to X$ such that all elevations of the $Y_i$ to $\hat{X}$ are embedded and do not inter-osculate with hyperplanes of $\hat{X}$.
\end{prop}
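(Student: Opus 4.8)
The plan is to reduce the statement to a finite collection of separability assertions about convex-cocompact subgroups, exploiting that the universal cover is CAT(0). As a first step I would reduce to the case that $X$ is finite and \emph{directly} special: $X$ has a finite special cover, which by \cite[Proposition 3.10]{HaglundWise08} has a finite directly special cover, and passing to the normal core gives a finite directly special \emph{regular} cover $X'\to X$. The hyperplane pathologies in the definition of direct specialness, as well as an inter-osculation between a hyperplane and a subcomplex, all descend along covering maps, and an elevation of an embedded subcomplex is just a component of its preimage; hence the two properties ``embedded'' and ``inter-osculates no hyperplane'' are inherited by every further cover once they hold. So it suffices to treat the finitely many finite elevations of the $Y_i$ to $X'$: a suitable finite directly special cover of $X'$, with fundamental group replaced by its normal core in $\pi_1 X$, then gives a regular cover of $X$ with the required property. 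Henceforth $X$ is finite and directly special; let $G=\pi_1 X$, let $\wt X\to X$ be the universal cover, fix elevations $\wt Y_i\subset\wt X$ (convex subcomplexes), and let $G_i=G_{\wt Y_i}$, which is convex-cocompact with $\wt Y_i/G_i=Y_i$.

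The key geometric input is that \emph{in a CAT(0) cube complex a hyperplane never inter-osculates a convex subcomplex}: if $\wt H$ crosses a convex subcomplex $\wt Y$ and osculates it at $(y;e)$, then the far endpoint $y'$ of $e$ is not in $\wt Y$ (otherwise convexity forces $e\subset\wt Y$); since $\wt H$ crosses $\wt Y$ it does not separate $y'$ from $\wt Y$, so $z:=\Pi_{\wt Y}(y')$ lies on the $y'$-side of $\wt H$ with $d(y',z)=d(y',\wt Y)=1$; hence $d(y,z)\le2$, while $d(y,z)\neq1$ since there cannot be a second edge at $y$ dual to $\wt H$, so $d(y,z)=2$, the path $y,y',z$ is a geodesic, and by convexity $y'\in\wt Y$ --- a contradiction. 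Using this I would translate the two conclusions, for a cover $\hat X=\wt X/\hat G$ with $\hat G\trianglelefteq G$ of finite index, into conditions on $\hat G$. By compactness of $X$ and the $Y_i$ the set $S_i=\{g\in G: g\wt Y_i\cap\wt Y_i\neq\varnothing\}$ is a finite union of $(G_i,G_i)$-double cosets, those outside $G_i$ omitting $1$, and all elevations of $Y_i$ to $\hat X$ are embedded if and only if $\hat G$ meets none of them. Moreover, by the displayed fact, an inter-osculation of a hyperplane $\hat H$ of $\hat X$ with an elevation of $Y_i$ can only arise from two \emph{distinct} hyperplanes of $\wt X$ --- one crossing, one osculating a common translate $g\wt Y_i$ --- that are identified by $\hat G$; choosing $G_i$-orbit representatives $\wt H^{\mathrm{cr}}_{i,a}$ and $\wt H^{\mathrm{os}}_{i,b}$ of the finitely many hyperplanes crossing, respectively osculating, $\wt Y_i$, a bookkeeping with deck transformations (using normality of $\hat G$) shows that such an inter-osculation forces $\hat G$ to meet one of the finitely many sets $T_{i,a,b}:=G_i\,k_0\,G_{\wt H^{\mathrm{cr}}_{i,a}}\,G_i$, where $k_0\wt H^{\mathrm{cr}}_{i,a}=\wt H^{\mathrm{os}}_{i,b}$; the displayed fact again gives $1\notin T_{i,a,b}$.

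To conclude, observe that $T_{i,a,b}=G_{\wt Y_i}\cdot G_{\wt H^{\mathrm{os}}_{i,b}}\cdot G_{k_0\wt Y_i}\cdot k_0$ is a product of at most three convex-cocompact subgroups of $G$ translated by a fixed element, while each double coset $G_i gG_i=(G_{\wt Y_i}\cdot G_{g\wt Y_i})\,g$ is a product of at most two. By separability of products of convex-cocompact subgroups of $G$ (the one-, two- and three-factor cases) every one of these finitely many sets is closed in the profinite topology; since none contains $1$, for each there is a finite-index normal subgroup of $G$ disjoint from it, and $\hat G$ is taken to be their intersection. Then $\hat X=\wt X/\hat G$ is a finite directly special regular cover in which every elevation of every $Y_i$ is embedded and inter-osculates no hyperplane, which together with the first step gives the proposition.

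The main obstacle is the inter-osculation condition in the second step. One must verify carefully both that a hyperplane cannot inter-osculate a convex subcomplex of a CAT(0) cube complex, and --- more seriously --- that the inter-osculation data can be organized into avoidance of \emph{finitely many} sets that omit $1$ and are products of \emph{few enough} convex-cocompact subgroups for the available product-separability results to apply; keeping this count down (to three factors here) is the delicate point. The embeddedness conclusion, by contrast, is soft: it follows already from separability of the subgroups $G_i$ themselves, equivalently from the canonical completion of Haglund--Wise.
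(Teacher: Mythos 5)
The paper does not reprove this proposition; it cites \cite[Corollary 4.8]{Shepherd23}, which is established there via the Walker--Imitator construction (equivalently, Haglund--Wise's canonical completion and retraction), not via separability. Your argument is correct but takes a genuinely different route: after the same reduction to $X$ finite and directly special, you translate both conclusions into the avoidance, by a finite-index normal subgroup $\hat G\triangleleft G=\pi_1(X)$, of finitely many subsets of $G$ omitting $1$ --- the double cosets $G_{\wt{Y}_i}gG_{\wt{Y}_i}$ with $g\wt{Y}_i\cap\wt{Y}_i\neq\varnothing$ and $g\notin G_{\wt{Y}_i}$ for embeddedness, and the sets $T_{i,a,b}=G_{\wt{Y}_i}\,G_{\wt{H}^{\mathrm{os}}_{i,b}}\,G_{k_0\wt{Y}_i}\,k_0$ for non-inter-osculation --- and then kill them using separability of products of at most three convex-cocompact subgroups. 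Your verification that $1\notin T_{i,a,b}$ via the no-inter-osculation fact for convex subcomplexes of CAT(0) cube complexes, your use of normality to move between elevations (the elevation carried by $g\wt{Y}_i$ produces the conjugate $gT_{i,a,b}g^{-1}$), and your observation that both target properties persist under further covers (so passing to the normal core is harmless) are all sound, as is your from-scratch proof of the CAT(0) no-inter-osculation fact. What your approach buys is brevity; what it costs is that the triple-product separability you invoke is \cite[Proposition 4.13]{Shepherd23}, which appears \emph{after} Corollary~4.8 in that paper, so your proof could not replace the original there --- the Walker--Imitator route is the self-contained one, and it is also what the present paper relies on to push product separability to arbitrarily many factors.
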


\subsection{Virtually special actions and convex-cocompact subgroups}

Let us now recall the definitions of virtually special actions and convex-cocompact subgroups from the introduction.

\begin{defn}(Virtually special action)\\\label{defn:cubulated}
	An action $G\acts \wt{X}$ on a CAT(0) cube complex is \emph{virtually special} if there exists a finite-index subgroup $G'<G$ acting freely on $\wt{X}$, with the quotient $\wt{X}/G'$ a special cube complex.		
	Note that the fundamental group of a (virtually special) non-positively curved cube complex gives rise to a (virtually special) free action on a CAT(0) cube complex by considering the action on the universal cover by deck transformations.
\end{defn}

\begin{defn}(Convex-cocompact subgroups)\\\label{defn:convexsubgp}
	Let $G\acts \wt{X}$ be a proper action on a CAT(0) cube complex. A subgroup $K<G$ is \emph{convex-cocompact} if it stabilizes and acts cocompactly on a convex subcomplex $\wt{Y}\subset \wt{X}$.
\end{defn}

\begin{remk}\label{remk:convexsubgp}
	If $G=\pi_1(X,x)$ is the fundamental group of a non-positively curved cube complex $X$, then $K<G$ being convex-cocompact is equivalent to $K$ being the image of a homomorphism $\phi_*:\pi_1(Y,y)\xhookrightarrow{}G$, defined by a local isometry $\phi:Y\to X$, with $Y$ finite, and (a homotopy class of) a path $\gamma$ in $X$ from $x$ to $\phi(y)$. We recover the first definition with respect to a based universal cover $(\widetilde{X},\ti{x})\to (X,x)$ by letting $\ti{\gamma}$ be a lift of $\gamma$ from $\ti{x}$ to $\ti{y}$, and setting $\widetilde{Y}$ to be the based elevation of $\phi:Y\to X$ with respect to basepoints $y,\phi(y)$ and $\ti{y}$ (note that $\widetilde{Y}$ is a convex subcomplex of $\widetilde{X}$ by Remark \ref{remk:univelev}). The other elevations of $Y$ are stabilized by conjugates of $K$.
\end{remk}

\subsection{Separability}\label{subsec:separability}

Recall from the introduction that a subset $S$ of a group $G$ is \emph{separable} if, for any $g\in G- S$, there is a homomorphism $\phi:G\to F$ to a finite group $F$ with $\phi(g)\notin\phi(S)$.
This has the following well-known equivalent formulations.

\begin{lem}\label{lem:sepequiv}
	Let $S$ be a subset of a group $G$. The following are equivalent:
	\begin{enumerate}
		\item $S$ is separable in $G$.
		\item For any $g\in G- S$, there exists a finite-index normal subgroup $\hat{G}\triangleleft G$ with $g\notin\hat{G}S$.
		\item\label{it:sep3} For any $g\in G- S$, there exists a finite-index subgroup $\hat{G}<G$ with $g\notin\hat{G}S$.
	\end{enumerate}
\end{lem}

The following lemma allows us to pass to finite-index subgroups when proving Theorem \ref{thm:prodsep}.

\begin{lem}\label{lem:sepfiniteindex}
	Let $G\acts X$ be a proper action on a CAT(0) cube complex and let $\hat{G}\triangleleft G$ be a finite-index normal subgroup.
	If every product of convex-cocompact subgroups of $\hat{G}$ is separable in $\hat{G}$ (with respect to the induced action $\hat{G}\acts X$), then every product of convex-cocompact subgroups of $G$ is separable in $G$.
\end{lem}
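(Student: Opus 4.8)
The plan is to reduce a separability question in $G$ to the corresponding question in $\hat G$, exploiting the hypothesis plus the fact that separable subsets of a finite-index subgroup are separable in the ambient group. Let $K_1,\dots,K_n<G$ be convex-cocompact subgroups and set $S=K_1K_2\cdots K_n$; we want $S$ separable in $G$. The first step is to intersect: put $\hat K_i=K_i\cap\hat G$, which has finite index in $K_i$, and observe that $\hat K_i$ is convex-cocompact with respect to $\hat G\acts X$ (if $K_i$ stabilizes and acts cocompactly on the convex subcomplex $\wt Y_i$, then so does the finite-index subgroup $\hat K_i$; properness of $G\acts X$ restricts to properness of $\hat G\acts X$). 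Since $K_i=\bigsqcup_{j} k_{i,j}\hat K_i$ for finitely many coset representatives $k_{i,j}\in K_i$, the product $S$ is a \emph{finite} union of sets of the form
\[
s_0\,\hat K_1\, s_1\, \hat K_2\, s_2\cdots \hat K_n\, s_n,
\]
where each $s_\ell\in G$; indeed expanding $K_1K_2\cdots K_n=\bigcup (k_{1,j_1}\hat K_1)(k_{2,j_2}\hat K_2)\cdots(k_{n,j_n}\hat K_n)$ and then, using $\hat K_i k_{i+1,j_{i+1}}=k_{i+1,j_{i+1}}(k_{i+1,j_{i+1}}^{-1}\hat K_i k_{i+1,j_{i+1}})$, one can shuffle all the coset representatives to the outside at the cost of replacing each $\hat K_i$ by a conjugate $\hat K_i^{g}$ for some $g\in G$. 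Note $\hat K_i^{g}=\hat K_i^{g}\cap\hat G$ is again convex-cocompact in $\hat G$ (it stabilizes and acts cocompactly on $g\wt Y_i$, which is still convex). Since a finite union of separable sets is separable, it suffices to show each such set $s_0 S' s_n$ with $S'=\hat K_1^{g_1}\hat K_2^{g_2}\cdots \hat K_n^{g_n}$ is separable in $G$; and since separability is preserved under left and right translation in $G$, it suffices to show $S'$ is separable in $G$.

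The second step handles the conjugating elements $g_i$. We may rewrite
\[
\hat K_1^{g_1}\hat K_2^{g_2}\cdots \hat K_n^{g_n}
= \hat K_1^{g_1}\, \hat K_2^{g_2}\cdots\hat K_n^{g_n},
\]
and absorb them as follows: set $h_1=g_1$ and for a product $A^{a}B^{b}=a(a^{-1}Aa)(b^{-1}Bb)b^{-1}\cdots$ — more cleanly, conjugate the whole product by $g_n^{-1}$ to move $g_n$ out, then repeat. Concretely, $S'$ is a translate (by elements of $G$) of a product $\hat L_1\hat L_2\cdots\hat L_n$ where each $\hat L_i$ is a conjugate (in $G$) of some $\hat K_j$; each $\hat L_i$, being $G$-conjugate to a convex-cocompact subgroup of $\hat G$ lying inside the normal subgroup $\hat G$, is itself a convex-cocompact subgroup of $\hat G$ (conjugation by $g\in G$ sends the convex subcomplex and cocompact action to another such, and $g\hat G g^{-1}=\hat G$). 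So after these reductions we are reduced to showing: any product $\hat L_1\cdots\hat L_n$ of convex-cocompact subgroups of $\hat G$ is separable in $G$.

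The final step is immediate from the hypothesis: by assumption such a product is separable in $\hat G$, and since $\hat G$ has finite index in $G$, a subset of $\hat G$ that is separable in $\hat G$ is separable in $G$ — this is the standard fact that separability passes up along finite-index subgroups (one can see it via Lemma \ref{lem:sepequiv}(3): given $g\in G\setminus(\hat L_1\cdots\hat L_n)$, if $g\notin\hat G$ then $\hat G$ itself is the desired finite-index subgroup avoiding $g\cdot(\hat L_1\cdots\hat L_n)\subset \hat G\cdot 1$; if $g\in\hat G$, apply separability in $\hat G$ to get a finite-index $\hat H<\hat G$, which is still finite index in $G$, with $g\notin\hat H(\hat L_1\cdots\hat L_n)$).

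The main obstacle is purely bookkeeping: organizing the coset-representative shuffle in the first step so that the conjugated subgroups that appear are manifestly still convex-cocompact in $\hat G$, and keeping track of left/right translations so that one only ever needs ``separable subsets of $G$ are closed under translation and finite unions,'' together with ``separable in $\hat G$ $\Rightarrow$ separable in $G$.'' There is no geometry beyond the elementary observation that a finite-index subgroup of a convex-cocompact subgroup, and a $G$-conjugate of a convex-cocompact subgroup of the normal subgroup $\hat G$, are again convex-cocompact in $\hat G$.
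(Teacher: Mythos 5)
Your proposal is correct and follows essentially the same route as the paper: write $K_1\cdots K_n$ as a finite union of multi-cosets using the coset decomposition $K_i=\bigsqcup_j k_{i,j}\hat K_i$, push coset representatives to the outside by replacing the $\hat K_i$ with $G$-conjugates (which remain convex-cocompact subgroups of $\hat G$ by normality), apply the hypothesis inside $\hat G$, lift separability to $G$ via finite index, and close with translation- and finite-union-invariance of separability. The paper's Equation \eqref{multicoset} performs your "shuffle" slightly more cleanly in one line via $h_i=g_1\cdots g_i$, and your intermediate display form $s_0\hat K_1 s_1\cdots\hat K_n s_n$ is stated before the shuffle is actually carried out, but the substance and all the supporting observations (finite-index subgroup of convex-cocompact is convex-cocompact; conjugate of a convex-cocompact subgroup of a normal subgroup stays convex-cocompact in that normal subgroup; separable in $\hat G$ implies separable in $G$) match the paper's.
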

\begin{proof}
Let $K_1,K_2,\dots,K_n<G$ be convex-cocompact subgroups. Let $\hat{K}_i=K_i\cap\hat{G}$ for $1\leq i\leq n$. The product $K_1K_2\cdots K_n$ can be written as a finite union of multi cosets $g_1\hat{K}_1g_2\hat{K}_2\cdots g_n\hat{K}_n$, with $g_1,\dots,g_n\in G$.
Putting $h_i=g_1g_2\cdots g_i$ for $1\leq i\leq n$, the multi coset above can be rewritten
as
\begin{equation}\label{multicoset}
	h_1\hat{K}_1h_1^{-1}h_2\hat{K}_2h_2^{-1}\cdots h_n\hat{K}_nh_n^{-1}h_n.
\end{equation}
Each $h_i\hat{K}_ih_i^{-1}$ is a convex-cocompact subgroup of $\hat{G}$, so their product is separable in $\hat{G}$, hence also separable in $G$. Separability of a subset is preserved by left and right multiplication, so the subset (\ref{multicoset}) is separable in $G$. Finite unions of separable subsets are separable, thus the original product $K_1K_2\cdots K_n$ is separable in $G$.
\end{proof}

\bigskip
\section{Projections and orthogonal complements}\label{sec:projections}

In this section we study projection maps and orthogonal complements in CAT(0) cube complexes. We will be especially interested in the case where we have a covering map from a CAT(0) cube complex to a weakly special cube complex. In particular, we will prove Propositions \ref{prop:projection} and \ref{prop:orthcomp}, which will be needed in Section \ref{sec:control}.
We start with some basic definitions and lemmas about convex hulls and convex subcomplexes.

\subsection{Convex hulls and convex subcomplexes}\label{subsec:conhulls}

\begin{defn}(Transverse hyperplanes and convex hulls)\\\label{defn:convexhulls}
Let $\wt{X}$ be a CAT(0) cube complex.
We say that a pair of hyperplanes $\wt{H}_1,\wt{H}_2$ in $\wt{X}$ are \emph{transverse} if they are distinct and have non-empty intersection.
Following \cite{CapraceSageev11}, if $\wt{A}\subset\wt{X}$ is a convex subcomplex, we write $\hH(\wt{A})$ for the set of hyperplanes of $\wt{X}$ that intersect $\wt{A}$ (which is in natural bijection with the set of hyperplanes of $\wt{A}$).
If $S\subset\wt{X}$ is any subcomplex (even disconnected), we define the \emph{convex hull} of $S$, denoted $\Hull(S)$, to be the unique smallest convex subcomplex of $\wt{X}$ that contains $S$ (in fact $\Hull(S)$ is the intersection of all the convex subcomplexes that contain $S$).
Since each hyperplane separates $\wt{X}$ into two convex subcomplexes (\emph{halfspaces}), it follows that $\hH(\Hull(S))$ is precisely the set of hyperplanes that separate a pair of vertices in $S$.
\end{defn}

\begin{lem}\label{lem:pathunion}
Let $\wt{X}$ be a CAT(0) cube complex and let $\wt{A}\subset\wt{X}$ be a convex subcomplex.
If $\ti{a}\in\wt{A}$ is a vertex, then the 1-skeleton of $\wt{A}$ is the union of all paths $\gamma$ that start at $\ti{a}$ and which only cross hyperplanes in $\hH(\wt{A})$.
Furthermore, $\wt{A}$ is the union of all cubes in $\wt{X}$ whose 1-skeleta are contained in the 1-skeleton of $\wt{A}$.
\end{lem}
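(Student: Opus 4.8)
The plan is to prove the two assertions in turn, both by a double-inclusion argument, using the fact that hyperplanes in a CAT(0) cube complex separate and that edge-paths crossing a hyperplane cross it an odd number of times exactly when the endpoints lie on opposite sides.

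For the first statement, one inclusion is immediate: any path $\gamma$ starting at $\ti a$ and crossing only hyperplanes in $\hH(\wt A)$ stays in $\wt A$. Indeed, if some vertex $\ti v$ of $\gamma$ were not in $\wt A$, then — since $\wt A$ is convex, hence an intersection of halfspaces — some hyperplane $\wt H$ would separate $\ti v$ from all of $\wt A$; in particular $\wt H$ separates $\ti a$ from $\ti v$, so $\gamma$ crosses $\wt H$, contradicting $\wt H\notin\hH(\wt A)$. Conversely, given a vertex $\ti b\in\wt A$, take any geodesic edge-path $\gamma$ from $\ti a$ to $\ti b$; because $\wt A$ is convex we may take $\gamma$ to lie in $\wt A$, and then every hyperplane it crosses separates two vertices of $\wt A$, hence lies in $\hH(\wt A)$. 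Since every vertex of $\wt A$ arises this way and every edge of $\wt A$ is the last edge of such a geodesic, the union of these paths is exactly the $1$-skeleton of $\wt A$.

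For the second statement, write $\wt A'$ for the union of all cubes of $\wt X$ whose $1$-skeleta lie in the $1$-skeleton of $\wt A$. The inclusion $\wt A\subseteq\wt A'$ is trivial since every cube of $\wt A$ is such a cube. For the reverse inclusion it suffices to show that if $C$ is a cube of $\wt X$ with all vertices (equivalently, all edges) in $\wt A$, then $C\subseteq\wt A$. This follows from convexity of $\wt A$: a cube is the convex hull of its vertices, so $C=\Hull(C^{(0)})\subseteq\wt A$, using that $\wt A$ is a convex subcomplex containing $C^{(0)}$. (Alternatively, one can argue via links: the barycentre of $C$ lies in $\wt A$ because $\wt A$ is full along the link of each of its vertices — any simplex of $\link(\ti v)$ in $\wt X$ spanned by directions into $\wt A$ is itself in $\link_{\wt A}(\ti v)$ — but the convex-hull argument is cleanest.) Combining, $\wt A'=\wt A$.

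I do not anticipate a serious obstacle here; the only point requiring a little care is making sure the geodesic from $\ti a$ to $\ti b$ can be chosen inside $\wt A$ and that "crosses only hyperplanes in $\hH(\wt A)$" is correctly matched with the definition of $\hH(\Hull(\cdot))$ recorded in Definition \ref{defn:convexhulls} — namely that $\hH(\wt A)$ is exactly the set of hyperplanes separating a pair of vertices of $\wt A$. Everything else is a direct application of convexity and the separation properties of hyperplanes.
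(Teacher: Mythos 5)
Your proof is correct and runs essentially parallel to the paper's. For the first statement you argue via separation (a vertex off $\wt{A}$ is cut off by a halfspace, hence by a hyperplane outside $\hH(\wt{A})$) whereas the paper phrases the same fact as a propagation step (an edge dual to a hyperplane of $\hH(\wt{A})$ with one endpoint in $\wt{A}$ lies entirely in $\wt{A}$), but these are two readings of the same halfspace intersection picture, and both then close the other inclusion by taking a geodesic inside $\wt{A}$. For the second statement the paper appeals directly to the fullness-of-links characterization of local convexity from Definition~\ref{defn:locisom}, while you lead with the observation that a cube equals the convex hull of its vertex set and invoke minimality of $\Hull$; this is a valid shortcut, though the fact $C=\Hull(C^{(0)})$ is itself most cleanly justified by the very link/halfspace considerations you relegate to the parenthetical alternative, so the two arguments are not really independent. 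No gaps.
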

\begin{proof}
As $\wt{A}$ is convex, it is an intersection of halfspaces in $\wt{X}$. None of the halfspaces in this intersection are bounded by hyperplanes in $\hH(\wt{A})$, so if an edge in $\wt{X}$ dual to a hyperplane in $\hH(\wt{A})$ has one endpoint in $\wt{A}$, then the whole edge must be in $\wt{A}$. The first part of the lemma follows.
The second part of the lemma holds because one can determine which cubes of dimension 2 and above are in $\wt{A}$ by examining how the 1-skeleton of $\wt{A}$ intersects the links of vertices (see Definition \ref{defn:locisom}).
\end{proof}

\begin{lem}\label{lem:stabilize}
Let $\wt{X}$ be a CAT(0) cube complex and let $\wt{A}\subset\wt{X}$ be a convex subcomplex. Suppose $g$ is an automorphism of $\wt{X}$ that stabilizes $\hH(\wt{A})$ and suppose there is a vertex $\ti{a}\in\wt{A}$ with $g\ti{a}\in\wt{A}$.
Then $g$ stabilizes $\wt{A}$.
\end{lem}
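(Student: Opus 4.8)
The plan is to deduce the lemma directly from the two halves of Lemma~\ref{lem:pathunion}. First I would record the elementary facts that, being a cubical automorphism of $\wt{X}$, the map $g$ permutes the hyperplanes of $\wt{X}$, sends any path crossing a hyperplane $\wt{H}$ to a path crossing $g\wt{H}$, and carries cubes to cubes. In particular the hypothesis that $g$ stabilizes $\hH(\wt{A})$ is the statement $g\,\hH(\wt{A})=\hH(\wt{A})$ as sets of hyperplanes.

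Next I would show that $g$ maps the $1$-skeleton of $\wt{A}$ into itself. Let $\gamma$ be any path starting at $\ti{a}$ that crosses only hyperplanes in $\hH(\wt{A})$; by the first part of Lemma~\ref{lem:pathunion} these paths together sweep out all of the $1$-skeleton of $\wt{A}$, so it suffices to check $g\gamma$ lies in the $1$-skeleton of $\wt{A}$. Now $g\gamma$ starts at $g\ti{a}$, which lies in $\wt{A}$ by hypothesis, and it crosses only hyperplanes in $g\,\hH(\wt{A})=\hH(\wt{A})$. Applying Lemma~\ref{lem:pathunion} once more, but this time with the basepoint $g\ti{a}\in\wt{A}$, every path starting at $g\ti{a}$ and crossing only hyperplanes in $\hH(\wt{A})$ is contained in the $1$-skeleton of $\wt{A}$; hence so is $g\gamma$. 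Thus $g$ sends the $1$-skeleton of $\wt{A}$ into itself, and the ``furthermore'' part of Lemma~\ref{lem:pathunion}, combined with the fact that $g$ takes cubes to cubes, upgrades this to $g(\wt{A})\subseteq\wt{A}$: any cube of $\wt{A}$ has its $1$-skeleton in that of $\wt{A}$, so its image under $g$ does too, and therefore lies in $\wt{A}$.

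Finally, to promote this to an equality I would apply the same argument to $g^{-1}$, which is again an automorphism of $\wt{X}$ stabilizing $\hH(\wt{A})$, and for which the vertex $g\ti{a}\in\wt{A}$ satisfies $g^{-1}(g\ti{a})=\ti{a}\in\wt{A}$. The previous paragraph then gives $g^{-1}(\wt{A})\subseteq\wt{A}$, i.e. $\wt{A}\subseteq g(\wt{A})$, and together with $g(\wt{A})\subseteq\wt{A}$ this yields $g(\wt{A})=\wt{A}$. I do not anticipate any genuine obstacle here; the only points requiring care are invoking Lemma~\ref{lem:pathunion} with the two different basepoints $\ti{a}$ and $g\ti{a}$, and keeping in mind that ``$g$ stabilizes $\hH(\wt{A})$'' refers to the set of hyperplanes crossing $\wt{A}$, not to $\wt{A}$ itself (which is exactly what we are trying to prove).
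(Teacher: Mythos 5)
Your proof is correct and follows essentially the same approach as the paper: both arguments combine the two halves of Lemma~\ref{lem:pathunion} with the observation that $g$ takes paths starting at $\ti{a}$ crossing only hyperplanes of $\hH(\wt{A})$ to paths starting at $g\ti{a}$ crossing only hyperplanes of $\hH(\wt{A})$, then run the symmetric argument for $g^{-1}$. The only cosmetic difference is that the paper concatenates with a connecting path from $\ti{a}$ to $g\ti{a}$ so as to invoke Lemma~\ref{lem:pathunion} only at the basepoint $\ti{a}$, whereas you invoke it directly at the second basepoint $g\ti{a}$; both readings of the lemma are legitimate and your version is, if anything, slightly cleaner.
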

\begin{proof}	
	For each path $\gamma$ that starts at $\ti{a}$ and which only crosses hyperplanes in $\hH(\wt{A})$, the image $g\gamma$ will still only cross hyperplanes in $\hH(\wt{A})$. There exists a path from $\ti{a}$ to $g\ti{a}$ that only crosses hyperplanes in $\hH(\wt{A})$, hence $g\wt{A}\subset\wt{A}$ by Lemma \ref{lem:pathunion}.
	 Applying the same argument to $g^{-1}$, we conclude that $g\wt{A}=\wt{A}$.
\end{proof}

The next two lemmas will be needed in Section \ref{sec:routes}.

\begin{lem}\label{lem:vspecfdim}
If a special cube complex $X$ has finitely many hyperplanes, then $X$ is finite dimensional and locally finite.
\end{lem}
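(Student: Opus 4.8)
The plan is to work directly from the definition of specialness. Fix a local isometry $\psi\colon X\to X_\Gamma$ to the Salvetti complex of some right-angled Artin group $A_\Gamma$, and write $N<\infty$ for the number of hyperplanes of $X$. Since $\psi$ is a combinatorial map it carries cubes to cubes and opposite edges of a square to opposite edges of a square, hence preserves (elementary) parallelism; so $\psi$ induces a well-defined map on hyperplanes, $H(e)\mapsto H(\psi(e))$. Recall also that $X_\Gamma$ has a single vertex $x_0$, that its hyperplanes are in bijection with $V\Gamma$ (the hyperplane $H(v)$ being dual to the single loop labelled $v$), and that the vertex set of $\link(x_0)$ is $\{v^+,v^-:v\in V\Gamma\}$, the set of ends of those loops.

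First I would prove finite-dimensionality. By Remark \ref{remk:special} no hyperplane of $X$ self-intersects. Given an $n$-cube $C$ of $X$, its $n$ edge-parallelism classes determine $n$ hyperplanes of $X$; if two of these classes determined the same hyperplane $H$, then choosing edges $e_1,e_2$ of $C$, one from each class, meeting at a common corner of a $2$-face of $C$, would exhibit a self-intersection of $H$ (as $e_1,e_2$ form the corner of a square and $H(e_1)=H=H(e_2)$), a contradiction. Hence $n\le N$, so $\dim X\le N$.

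Next, local finiteness, for which it suffices to show that every vertex link $\link(x)$ is a finite simplicial complex (then each cube of $X$ lies in only finitely many cubes). The link map $\link(x)\hookrightarrow\link(x_0)$ is injective, so it is enough to bound the number of vertices of its image. A vertex of $\link(x)$ is an end at $x$ of some edge $e$, and under the link map it is sent into the two-element set $\{v^+,v^-\}$, where $H(v)=H(\psi(e))$ is the image under $\psi$ of the hyperplane $H(e)$. As $X$ has only $N$ hyperplanes, the hyperplanes $H(\psi(e))$ arising in this way involve at most $N$ generators $v$, so the image of $\link(x)$ meets at most $N$ of the pairs $\{v^+,v^-\}$; hence $\link(x)$ has at most $2N$ vertices and is therefore finite.

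Both arguments are short. The one point needing a little care is the combinatorics of $\link(x_0)$ — specifically that for each generator $v$ there are exactly two vertices of $\link(x_0)$ that are ends of the loop labelled $v$, so that the vertex count in the last step is immediate. Everything else is a direct unwinding of the definitions of local isometry and of the Salvetti complex, together with the fact (Remark \ref{remk:special}) that special cube complexes have no self-intersecting hyperplanes.
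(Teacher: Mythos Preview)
Your proof is correct. Both you and the paper start from the local isometry $\psi\colon X\to X_\Gamma$, but the paper packages the argument differently: it observes that the images of the finitely many hyperplanes of $X$ involve only finitely many generators, so the image of $X$ lies in the finite subcomplex $X_\Lambda\subset X_\Gamma$ associated to a finite subgraph $\Lambda\subset\Gamma$; finite-dimensionality and local finiteness then follow in one stroke from the fact that $X_\Lambda$ is a finite cube complex and $X\to X_\Lambda$ is still a local isometry. Your approach handles the two conclusions separately, and your finite-dimensionality argument (an $n$-cube meets $n$ distinct hyperplanes since hyperplanes do not self-intersect, so $n\le N$) is actually more elementary than the paper's---it uses only the absence of self-intersections, not the full Salvetti map. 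Your local finiteness argument is essentially the same as the paper's, just carried out explicitly at the level of link vertices rather than by invoking finiteness of $X_\Lambda$.
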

\begin{proof}
	Let $X\to A_\Gamma$ be a local isometry to the Salvetti complex of a right-angled Artin group. Since $X$ has finitely many hyperplanes, there is a finite subgraph $\Lambda\subset\Gamma$ such that the image of $X$ in $X_\Gamma$ is contained in the natural subcomplex $X_\Lambda\subset X_\Gamma$.
	The restriction $X\to X_\Lambda$ is still a local isometry, so the lemma follows from the fact that $X_\Lambda$ is a finite cube complex.
\end{proof}

\begin{lem}\label{lem:vspeccocomp}
Let $G\acts\wt{X}$ be a virtually special action of a group $G$ on a CAT(0) cube complex $\wt{X}$.
Suppose $\wt{Y}\subset\wt{X}$ is a $G$-invariant $G$-cocompact convex subcomplex, and suppose $\ti{x}\in\wt{X}$ is a vertex.
Then the action of $G$ on $\wt{Z}=\Hull(\wt{Y}\cup G\cdot\ti{x})$ is cocompact.
\end{lem}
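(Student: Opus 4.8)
The plan is to deduce cocompactness of $G\acts\wt Z$ from three facts about $\wt Z$: that it is finite-dimensional, that it is locally finite, and that every vertex of $\wt Z$ lies within a uniformly bounded distance of a single $G$-orbit. The first two are where virtual specialness enters; the third (a metric bound on how far $\Hull(\wt Y\cup G\cdot\ti x)$ can drift from $\wt Y$) is the real content. We may assume $\wt Y\neq\emptyset$ (and if $\ti x\in\wt Y$ there is nothing to prove). Write $\ti y_0=\Pi_{\wt Y}(\ti x)$ for the gate, $r=d(\ti x,\wt Y)=d(\ti x,\ti y_0)$, and fix a combinatorial geodesic $\gamma$ from $\ti x$ to $\ti y_0$; it crosses exactly the $r$ hyperplanes that separate $\ti x$ from $\wt Y$, which we call $\wt H_1,\dots,\wt H_r$. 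Since $G$ preserves $\wt Y$ it preserves $\wt Z$, so $G$ acts on $\wt Z$.

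First I would identify $\hH(\wt Z)$. By Definition~\ref{defn:convexhulls} it is the set of hyperplanes separating two vertices of $\wt Y\cup G\cdot\ti x$. Any hyperplane outside $\hH(\wt Y)$ has all of $\wt Y$ on one side, so if it separates two such vertices it must separate some $h\ti x$ ($h\in G$) from $\wt Y$, and the hyperplanes that do this are precisely $h\wt H_1,\dots,h\wt H_r$; together with $\hH(\wt Y)\subseteq\hH(\wt Z)$ and $G$-invariance this gives
\[\hH(\wt Z)=\hH(\wt Y)\cup G\cdot\{\wt H_1,\dots,\wt H_r\}.\]
Now pick a finite-index $G'<G$ acting freely on $\wt X$ with $\wt X/G'$ special. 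Then $\wt Y/G'$ is a compact cube complex and $G\cdot\{\wt H_i\}$ is a union of at most $r$ orbits, so $\hH(\wt Z)$ maps onto a finite set of hyperplanes of $\wt X/G'$. Composing $\wt Z\hookrightarrow\wt X\to\wt X/G'$ with a local isometry to a Salvetti complex and lifting to universal covers realises $\wt Z$ as a convex subcomplex of the universal cover $\wt X_\Gamma$ of that Salvetti complex (Remark~\ref{remk:isembedding}), all of whose hyperplanes carry labels from a finite full subgraph $\Lambda\subseteq\Gamma$; by Lemma~\ref{lem:pathunion}, $\wt Z$ then lies inside a single copy of $\wt X_\Lambda$. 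As $X_\Lambda$ is finite, $\wt X_\Lambda$ is finite-dimensional and locally finite, and hence so is $\wt Z$; put $N=\dim\wt Z<\infty$.

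The heart of the argument is the bound $d(p,\wt Y)\le Nr$ for every vertex $p\in\wt Z$. Let $\mathfrak H_p$ be the set of hyperplanes separating $p$ from $\wt Y$, so $|\mathfrak H_p|=d(p,\wt Y)$; each lies in $\hH(\wt Z)\setminus\hH(\wt Y)\subseteq G\cdot\{\wt H_1,\dots,\wt H_r\}$. Because every member of $\mathfrak H_p$ has $p$ on one side and the non-empty set $\wt Y$ on the other, any two non-transverse members have nested $p$-side halfspaces; thus $\mathfrak H_p$ carries a partial order whose antichains are exactly the pairwise-transverse subsets, and such subsets span cubes of $\wt Z$, so have size $\le N$. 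By Dilworth's theorem, $\mathfrak H_p$ contains a chain $\wt K_1\prec\dots\prec\wt K_t$ with $t\ge|\mathfrak H_p|/N$, where $\wt K_t$ is innermost, i.e.\ its $p$-side halfspace lies inside the $p$-side halfspace of each $\wt K_s$. Write $\wt K_t=h\wt H_i$ with $h\in G$. Since $\wt H_i$ separates $\ti x$ from $\wt Y$, the hyperplane $\wt K_t$ separates $h\ti x$ from $\wt Y$, so $h\ti x$ lies in the $p$-side halfspace of $\wt K_t$, hence in that of every $\wt K_s$; therefore all $t$ of the hyperplanes $\wt K_1,\dots,\wt K_t$ separate $h\ti x$ from $\wt Y$, forcing $t\le d(h\ti x,\wt Y)=d(\ti x,\wt Y)=r$. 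Hence $d(p,\wt Y)=|\mathfrak H_p|\le Nr$.

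To conclude, $G$-cocompactness of $\wt Y$ provides an $R$ with every vertex of $\wt Y$ within $R$ of $G\cdot\ti y_0$, so by the metric bound every vertex of $\wt Z$ is within $Nr+R$ of $G\cdot\ti y_0$; combined with local finiteness and finite-dimensionality of $\wt Z$, the subcomplex $F\subseteq\wt Z$ spanned by the vertices within $Nr+R+N$ of $\ti y_0$ is finite, and translating any cube of $\wt Z$ by a suitable element of $G$ carries it into $F$. Thus $\wt Z$ has only finitely many $G$-orbits of cubes, so $\wt Z/G$ is compact. I expect the main obstacle to be the metric bound of the third paragraph — controlling the drift of $\Hull(\wt Y\cup G\cdot\ti x)$ away from $\wt Y$ when the orbit points $g\ti x$ are scattered all along the infinite subcomplex $\wt Y$ — together with pinning down finite-dimensionality of $\wt Z$ without any hypothesis on $\wt X$ itself, which is exactly where virtual specialness is used.
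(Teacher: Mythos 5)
Your proof is correct, and its overall scaffolding (reduce to a special quotient, establish that $\wt Z$ is finite-dimensional and locally finite, bound $d(\cdot,\wt Y)$ uniformly on $\wt Z$, conclude cocompactness) matches the paper's. The difference is entirely in how the metric bound is obtained. The paper's argument is shorter and uses specialness directly in that step: any two hyperplanes separating a vertex $\ti z$ of $\wt Z$ from $\wt Y$ must lie in distinct $G$-orbits, since $G$-equivalent hyperplanes of a special quotient are disjoint, disjoint separating hyperplanes are nested, and nesting would force one to be strictly closer to the $G$-invariant $\wt Y$ than its $G$-translate — impossible; hence $d(\ti z,\wt Y)$ is at most the (already finite) number of $G$-orbits in $\hH(\wt Z)$. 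You instead first pin down $\hH(\wt Z)=\hH(\wt Y)\cup G\cdot\{\wt H_1,\dots,\wt H_r\}$ explicitly, partially order the separating hyperplanes $\mathfrak H_p$ by nesting of $p$-side halfspaces, bound antichains by $N=\dim\wt Z$, extract a chain of length $\geq|\mathfrak H_p|/N$ (this is Mirsky's theorem rather than Dilworth's, though the inequality you state is the correct one), and then trap the witness $h\ti x$ of the innermost chain element inside all $p$-side halfspaces of the chain to cap its length at $r$, giving the explicit bound $d(p,\wt Y)\leq Nr$. Your route is longer, but only invokes specialness for finite-dimensionality (not for the no-self-intersection property), and it produces an explicit constant in terms of $r=d(\ti x,\wt Y)$ rather than an orbit count. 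One step worth making explicit: to bound antichains by $\dim\wt Z$ you need that pairwise-transverse hyperplanes of $\hH(\wt Z)$ actually cross \emph{within} $\wt Z$ (so they span a cube of $\wt Z$); this is true — the gate map $\Pi_{\wt Z}$ sends the four vertices of a crossing square in $\wt X$ to four vertices of $\wt Z$, one per quadrant — but you pass over it silently.
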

\begin{proof}
	Passing to a finite-index subgroup if necessary, we may assume that $G$ acts freely on $\wt{X}$ with the quotient $\wt{X}/G$ a special cube complex.
	If $\gamma$ is a path joining $\ti{x}$ to $\wt{Y}$, then $\wt{Y}\cup \bigcup_{g\in G}g\gamma$ is a connected subcomplex of $\wt{X}$ that contains $\wt{Y}\cup G\cdot\ti{x}$ and has finitely many $G$-orbits of hyperplanes.
	It follows that $\wt{Z}=\Hull(\wt{Y}\cup G\cdot\ti{x})$ has finitely many $G$-orbits of hyperplanes.
	Since $\wt{Z}/G$ is special ($\wt{Z}/G\to\wt{X}/G$ is a local isometry), it follows
	from Lemma \ref{lem:vspecfdim} that $\wt{Z}$ is finite dimensional and locally finite.
	
	We claim that $\wt{Z}$ is contained in a bounded neighbourhood of $\wt{Y}$.
	Since $\wt{Z}$ is convex and locally finite, and since $\wt{Y}$ is $G$-cocompact, it will follow from the claim that $\wt{Z}$ is $G$-cocompact.
	Let $\ti{z}\in\wt{Z}$ be a vertex and let $\cA$ denote the set of hyperplanes of $\wt{X}$ that separate $\ti{z}$ from $\wt{Y}$.
	Suppose $\wt{H}_1,\wt{H}_2\in\cA$ are distinct hyperplanes in the same $G$-orbit.
	Since $\wt{Z}/G$ is special, $\wt{H}_1$ and $\wt{H}_2$ must be disjoint.
	Then (up to swapping indices) $\wt{H}_1$ separates $\wt{H}_2$ from $\wt{Y}$, so $\wt{Y}$ is closer to $\wt{H}_1$ than $\wt{H}_2$.
	But this is impossible since $\wt{Y}$ is $G$-invariant and $\wt{H}_1$ and $\wt{H}_2$ are in the same $G$-orbit.
	It follows that the hyperplanes in $\cA$ are in different $G$-orbits, which gives us an upper bound on $|\cA|$ independent of $\ti{z}$.
	Since $|\cA|$ is equal to the combinatorial distance from $\ti{z}$ to $\wt{Y}$, this proves the claim that $\wt{Z}$ is contained in a bounded neighbourhood of $\wt{Y}$.	
\end{proof}

\subsection{Projections and orthogonal subcomplexes}

\begin{defn}(Orthogonal subcomplexes)\\\label{defn:orthogonal}
Let $\wt{X}$ be a CAT(0) cube complex and let $\wt{A},\wt{B}\subset\wt{X}$ be convex subcomplexes. We say that $\wt{A}$ and $\wt{B}$ are \emph{orthogonal} if their intersection is a single vertex $\tilde{x}$ and if every hyperplane in $\hH(\wt{A})$ is transverse to every hyperplane in $\hH(\wt{B})$.
In this case, there is an isomorphism of CAT(0) cube complexes $\phi: \wt{A}\times\wt{B}\to\Hull(\wt{A}\cup\wt{B})$, with $\phi(\tilde{x},\tilde{x})=\tilde{x}$ and the restrictions $\phi:\wt{A}\times\{\tilde{x}\}\to\wt{A}$ and $\phi:\{\tilde{x}\}\times\wt{B}\to\wt{B}$ induced by the identity maps on $\wt{A}$ and $\wt{B}$ (this follows from \cite[Lemma 2.5]{CapraceSageev11} and the fact that $\hH(\Hull(\wt{A}\cup\wt{B}))=\hH(\wt{A})\cup\hH(\wt{B})$).

\end{defn}

\begin{defn}\label{defn:projection}(Projection maps)\\
	Let $\wt{X}$ be a CAT(0) cube complex and let $\wt{A}\subset\wt{X}$ be a convex subcomplex. The \emph{projection to $\wt{A}$} is the combinatorial map $\Pi_{\wt{A}}:\wt{X}\to \wt{A}$ that sends each vertex $\tilde{x}\in \wt{X}$ to the unique closest vertex in $\wt{A}$ with respect to the combinatorial metric -- this is well-defined by \cite[Lemma 13.8]{HaglundWise08}.
\end{defn}

These projection maps can be used to construct ``bridges'' between convex subcomplexes as follows. This theorem appears in lecture notes of Hagen as \cite[Theorem 1.22]{Hagen19} (see also \cite{ChatterjiFernosIozzi16} and \cite{Shepherd23}).

\begin{thm}(Bridge Theorem)\\\label{thm:bridge}
	Let $\wt{X}$ be a CAT(0) cube complex and let $\wt{A},\wt{B}\subset\wt{X}$ be convex subcomplexes.
	Let $\wt{C}=\Pi_{\wt{A}}(\wt{B})$.
	Then $\hH(\wt{C})=\hH(\wt{A})\cap\hH(\wt{B})$.
	Furthermore, there is a convex subcomplex $\wt{D}$ orthogonal to $\wt{C}$, and vertices $\tilde{a}\in\wt{D}\cap\wt{C}$ and $\ti{b}\in\wt{D}\cap\wt{B}$ such that the isomorphism $\phi:\wt{C}\times\wt{D}\to\Hull(\wt{C}\cup\wt{D})$ satisfies:
	\begin{enumerate}
		\item\label{item:intA} $\phi(\wt{C}\times\wt{D})\cap\wt{A}=\phi(\wt{C}\times\{\tilde{a}\})=\wt{C}$,
		\item\label{item:intB} $\phi(\wt{C}\times\wt{D})\cap\wt{B}=\phi(\wt{C}\times\{\tilde{b}\})=\Pi_{\wt{B}}(\wt{A})$, and
		\item\label{item:sephyp} $\hH(\wt{D})$ is precisely the set of hyperplanes that separate $\wt{A}$ and $\wt{B}$.
	\end{enumerate}  
\end{thm}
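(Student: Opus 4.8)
The plan is to derive everything from two standard facts about the combinatorial projection (gate) map: (i) for any vertex $\ti x$, the hyperplanes separating $\ti x$ from $\Pi_{\wt A}(\ti x)$ are precisely those separating $\ti x$ from $\wt A$, so none of them lie in $\hH(\wt A)$ and $\ti x$, $\Pi_{\wt A}(\ti x)$ lie on the same side of every hyperplane of $\hH(\wt A)$; and (ii) the image $\Pi_{\wt A}(\wt B)$ of a convex subcomplex is again a convex subcomplex. I would also freely use that $\hH(\Hull(S))$ is the set of hyperplanes separating two vertices of $S$ (Definition \ref{defn:convexhulls}) and that a convex subcomplex is determined by one of its vertices together with its hyperplane set (Lemma \ref{lem:pathunion}). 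For the hyperplane identity: $\hH(\wt C)$ is the set of hyperplanes separating $\Pi_{\wt A}(\ti b_1)$ from $\Pi_{\wt A}(\ti b_2)$ over vertices $\ti b_1,\ti b_2\in\wt B$, and all of these lie in $\hH(\wt A)$; by (i) such a hyperplane separates $\Pi_{\wt A}(\ti b_1)$ from $\Pi_{\wt A}(\ti b_2)$ if and only if it separates $\ti b_1$ from $\ti b_2$, so $\hH(\wt C)=\hH(\wt A)\cap\hH(\wt B)$, and by symmetry $\wt E:=\Pi_{\wt B}(\wt A)$ satisfies $\hH(\wt E)=\hH(\wt A)\cap\hH(\wt B)=\hH(\wt C)$.

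Next I would set up the bridge. Fix a vertex $\ti a\in\wt C$, put $\ti b:=\Pi_{\wt B}(\ti a)$ (so $\ti b\in\Pi_{\wt B}(\wt A)=\wt E$ since $\ti a\in\wt C\subseteq\wt A$), and let $\mathcal W$ be the set of hyperplanes separating $\ti a$ from $\ti b$. By (i) this is exactly the set of hyperplanes separating $\ti a$ from $\wt B$, and I claim it equals the set $\mathcal V$ of hyperplanes separating $\wt A$ from $\wt B$: one inclusion is trivial, and if some $\wt H$ separated $\ti a$ from $\wt B$ but not $\wt A$ from $\wt B$ then $\wt H\in\hH(\wt A)$, so writing $\ti a=\Pi_{\wt A}(\ti b_0)$ with $\ti b_0\in\wt B$ a vertex, (i) would force $\ti a$ and $\ti b_0$ onto the same side of $\wt H$, contradicting that $\wt H$ separates $\ti a$ from $\wt B\ni\ti b_0$. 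Thus $\mathcal W=\mathcal V$; in particular $\mathcal W$ is disjoint from $\hH(\wt A)$, from $\hH(\wt B)$, and hence from $\hH(\wt C)=\hH(\wt E)$. Moreover every hyperplane of $\hH(\wt C)$ is transverse to every hyperplane of $\mathcal W$, since a hyperplane of $\hH(\wt C)=\hH(\wt E)$ meets both $\wt C\subseteq\wt A$ and $\wt E\subseteq\wt B$, which lie on opposite sides of each $\wt H\in\mathcal W$.

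I would then define $\wt D$ to be the convex hull of the union of all paths that start at $\ti a$ and cross only hyperplanes in $\mathcal W$; then $\hH(\wt D)=\mathcal W$, which is conclusion \ref{item:sephyp}, and $\ti a\in\wt D\cap\wt C$, while $\ti b\in\wt D$ (a path from $\ti a$ to $\ti b$ crosses only hyperplanes of $\mathcal W$) gives $\ti b\in\wt D\cap\wt B$. Since $\hH(\wt C)\cap\hH(\wt D)=\hH(\wt C)\cap\mathcal W=\varnothing$ we get $\wt C\cap\wt D=\{\ti a\}$, so together with the transversality just established $\wt C$ and $\wt D$ are orthogonal in the sense of Definition \ref{defn:orthogonal}, yielding $\phi:\wt C\times\wt D\to\Hull(\wt C\cup\wt D)$ restricting to the identity on $\wt C\times\{\ti a\}=\wt C$ and on $\{\ti a\}\times\wt D=\wt D$. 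For \ref{item:intA}: $\wt C\subseteq\wt A$, while if $\ti d\ne\ti a$ then $\phi(\ti c,\ti d)$ is separated from $\phi(\ti c,\ti a)=\ti c$ by some $\wt H\in\mathcal W$, and since $\wt H\notin\hH(\wt A)$ separates $\wt A$ from $\wt B$, all of $\wt A$ lies on the $\ti c$-side, so $\phi(\ti c,\ti d)\notin\wt A$; hence $\phi(\wt C\times\wt D)\cap\wt A=\wt C$. Conclusion \ref{item:intB} is the mirror image: $\phi(\wt C\times\{\ti b\})$ is a convex subcomplex containing $\ti b=\phi(\ti a,\ti b)$ with hyperplane set $\hH(\wt C)=\hH(\wt E)$, hence equals $\wt E=\Pi_{\wt B}(\wt A)$ by Lemma \ref{lem:pathunion}, and the same separation argument (now with $\wt B$ on the $\ti b$-side of every hyperplane of $\mathcal W$) gives $\phi(\wt C\times\wt D)\cap\wt B=\phi(\wt C\times\{\ti b\})$.

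The step I expect to be the main obstacle is the identification $\mathcal W=\mathcal V$ in the second paragraph, together with the disjointness and transversality bookkeeping that flows from it: this is exactly where the choice of $\ti a$ in $\wt C=\Pi_{\wt A}(\wt B)$ — rather than an arbitrary vertex of $\wt A$ — is essential, via property (i), and it is also where one must be careful not to circularly invoke the statement being proved when asserting convexity of $\Pi_{\wt A}(\wt B)$ (fact (ii)). Once $\mathcal W$ is correctly pinned down, the remaining arguments are formal manipulations with halfspaces and the product decomposition of Definition \ref{defn:orthogonal}.
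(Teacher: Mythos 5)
The paper does not include a proof of the Bridge Theorem: it is quoted with attribution to \cite[Theorem~1.22]{Hagen19} (with pointers to \cite{ChatterjiFernosIozzi16} and \cite{Shepherd23}), so there is nothing of the paper's own to compare against. What follows is therefore an assessment of your proof on its merits.

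Your argument is correct and is, I believe, essentially the standard one. The two black boxes you invoke --- that the hyperplanes separating $\ti x$ from $\Pi_{\wt A}(\ti x)$ are exactly those separating $\ti x$ from $\wt A$, and that the gate image of a convex subcomplex is convex --- are genuine facts about gate maps that precede the Bridge Theorem, so there is no circularity in using them (fact (ii) is needed only to make $\hH(\wt C)$ meaningful, since Definition~\ref{defn:convexhulls} defines $\hH$ for convex subcomplexes). The key step is your identification $\mathcal{W}=\mathcal{V}$, and the argument there is sound: if $\wt H$ separated $\ti a$ from $\wt B$ without separating $\wt A$ from $\wt B$, convexity of $\wt A$ together with $\ti a\in\wt A$ forces $\wt H\in\hH(\wt A)$, which contradicts fact (i) applied to the vertex $\ti b_0\in\wt B$ with $\Pi_{\wt A}(\ti b_0)=\ti a$ --- and, as you note, this is exactly where $\ti a\in\wt C$ rather than $\ti a\in\wt A$ arbitrary is used. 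The subsequent bookkeeping (disjointness of $\mathcal W$ from $\hH(\wt A)\cup\hH(\wt B)$, transversality of $\hH(\wt C)$ with $\mathcal W$, $\wt C\cap\wt D=\{\ti a\}$ from the empty intersection of hyperplane sets, and the halfspace arguments for parts~\ref{item:intA} and~\ref{item:intB}) is all correct. Your definition of $\wt D$ as a convex hull of a union of paths works, but it is simpler and equivalent to take $\wt D=\Hull(\{\ti a,\ti b\})$: then $\hH(\wt D)=\mathcal W$ is immediate from the description of $\hH(\Hull(\cdot))$ in Definition~\ref{defn:convexhulls}, and the coincidence with your definition follows from Lemma~\ref{lem:pathunion} since the two convex subcomplexes share the vertex $\ti a$ and have the same hyperplane set.
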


First, we use the Bridge Theorem to prove the following lemma about elevations of locally convex subcomplexes.

\begin{lem}\label{lem:hH=hH}
	Let $X$ be a finite non-positively curved cube complex with universal cover $\wt{X}$, and let $Z\subset X$ be a locally convex subcomplex.
	If $\wt{A},\wt{B}$ are two elevations of $Z$ to $\wt{X}$ with $\hH(\wt{B})\subset\hH(\wt{A})$, then $\hH(\wt{B})=\hH(\wt{A})$.
\end{lem}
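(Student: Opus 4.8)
The plan is to use the fact that $\wt{A}$ and $\wt{B}$ are elevations of the \emph{same} subcomplex $Z\subset X$, so they differ by a deck transformation, and then to use the hypothesis $\hH(\wt{B})\subset\hH(\wt{A})$ together with the Bridge Theorem to force the two hyperplane sets to coincide. First I would set $G=\pi_1(X)$ acting on $\wt{X}$ by deck transformations. Since $\wt{A}$ and $\wt{B}$ are elevations of $Z$, by Remark~\ref{remk:univelev} there is some $g\in G$ with $\wt{B}=g\wt{A}$; hence $\hH(\wt{B})=g\cdot\hH(\wt{A})$ and in particular $|\hH(\wt{B})|=|\hH(\wt{A})|$ as cardinalities, \emph{provided} these sets are finite. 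Since $X$ is finite, $Z$ has finitely many hyperplanes, and the hyperplanes of $\wt{A}$ (resp.\ $\wt{B}$) are in bijection with those of $Z$ via the covering $\wt{A}\to Z$ (resp.\ $\wt{B}\to Z$); so $\hH(\wt{A})$ and $\hH(\wt{B})$ are finite of equal cardinality. Combined with the inclusion $\hH(\wt{B})\subset\hH(\wt{A})$, this immediately gives $\hH(\wt{B})=\hH(\wt{A})$.

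Wait — this quick argument already seems to finish it, so let me double check whether the Bridge Theorem is actually needed. The only subtlety is the claim that $\hH(\wt{A})$ is finite: this requires that the covering map $\wt{A}\to Z$ sends distinct hyperplanes of $\wt{A}$ to distinct hyperplanes of $Z$... which is false in general (a hyperplane of $Z$ can have many preimages in $\wt{A}$). So the naive cardinality count does \emph{not} work, and this is the main obstacle. What is true is that $\hH(\wt{A})$ surjects onto the (finite) hyperplane set of $Z$, but the fibers can be infinite. So I need a genuine argument.

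Here is the corrected plan. Pick a vertex $\ti{a}\in\wt{A}$, and recall from Lemma~\ref{lem:pathunion} that the $1$-skeleton of $\wt{A}$ is the union of all paths from $\ti{a}$ crossing only hyperplanes in $\hH(\wt{A})$. Since $\wt{B}=g\wt{A}$ for a deck transformation $g$, and $\hH(\wt{B})=g\cdot\hH(\wt{A})\subset\hH(\wt{A})$, the automorphism $g$ maps $\hH(\wt{A})$ \emph{into} itself. Now I would argue that $g$ in fact \emph{stabilizes} $\hH(\wt{A})$: the map $g\colon\hH(\wt{A})\to\hH(\wt{A})$ is injective (it is a restriction of an automorphism of $\wt{X}$), but injectivity alone does not give surjectivity for an infinite set, so instead I descend: the set $\hH(\wt{A})$ is a union of preimage-classes, one for each hyperplane $H$ of $Z$, and $g$ permutes $\hH(\wt{A})$ compatibly with the covering $\wt{A}\to Z$ (since $g$ covers the identity on $X$), hence permutes these finitely many classes; being injective on the union of finitely many classes and preserving the class-partition, it must be a bijection on each class, hence surjective. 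Therefore $g$ stabilizes $\hH(\wt{A})$. Now apply Lemma~\ref{lem:stabilize}: $g$ stabilizes $\hH(\wt{A})$ and $g\ti{a}\in g\wt{A}=\wt{B}$ — but I need $g\ti{a}\in\wt{A}$, which need not hold directly. To fix this, note instead that $g^{-1}$ also stabilizes $\hH(\wt{A})$, so $\hH(g\wt{A})=g\hH(\wt{A})=\hH(\wt{A})$, i.e.\ $\hH(\wt{B})=\hH(\wt{A})$, which is exactly the conclusion.

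The step I expect to be the genuine crux is establishing that $g$ stabilizes (not merely maps into) $\hH(\wt{A})$ — i.e.\ ruling out the possibility that $g\cdot\hH(\wt{A})$ is a proper subset. The descent-to-$Z$ argument above is the natural route, but one must be careful that $g$ really does preserve the partition of $\hH(\wt{A})$ into preimage-classes of hyperplanes of $Z$; this holds because $g$ is a deck transformation, so it commutes with the covering $\wt{X}\to X$, and the covering $\wt{A}\to Z$ is the restriction, hence $g$ sends the class over $H$ to the class over $H$ for each hyperplane $H$ of $Z$. Each such class is finite (this is where finiteness of $X$ enters, via $Z$ being finite with finitely many hyperplanes, each having finitely many preimages in the finite cover... actually in the \emph{universal} cover a class may be infinite — so more care is needed: perhaps one should first pass through an intermediate finite cover, or instead invoke the Bridge Theorem as the paper presumably intends). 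Given the placement of this lemma right after the Bridge Theorem, the intended proof likely applies Theorem~\ref{thm:bridge} to $\wt{A},\wt{B}$: letting $\wt{C}=\Pi_{\wt{A}}(\wt{B})$, we get $\hH(\wt{C})=\hH(\wt{A})\cap\hH(\wt{B})=\hH(\wt{B})$ by hypothesis, and then one argues that $\wt{C}=\wt{B}$ (using that $\wt{B}$ is an elevation of $Z$, so $\wt{B}$ and its projection $\wt{C}$ cover the same $Z$ and are therefore equal), which forces $\hH(\wt{B})=\hH(\wt{A})\cap\hH(\wt{B})\supseteq$ no—rather one runs the symmetric argument with $\Pi_{\wt{B}}(\wt{A})$ to get $\hH(\wt{A})\subseteq\hH(\wt{B})$. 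I would pursue this Bridge-Theorem route as the clean path, with the descent argument as backup.
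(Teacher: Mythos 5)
Your proposal contains two attempted arguments, and both have genuine gaps.

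The first (descent) argument fails exactly where you suspect: the fibers of the natural map $\hH(\wt{A})\to\{\text{hyperplanes of }Z\}$ can be infinite, so knowing that $g$ preserves the fiber-partition and is injective does not give surjectivity of $g$ on $\hH(\wt{A})$. You acknowledge this, so I won't belabor it.

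The second (Bridge Theorem) sketch contains a concrete error and then stops short of the real difficulty. You write that ``$\wt{B}$ and its projection $\wt{C}=\Pi_{\wt{A}}(\wt{B})$ cover the same $Z$ and are therefore equal,'' but $\wt{C}$ is a subcomplex of $\wt{A}$ whereas $\wt{B}$ in general is not; they are \emph{isometric} (via $\Pi_{\wt{A}}$), not equal as subsets of $\wt{X}$. And the ``symmetric argument with $\Pi_{\wt{B}}(\wt{A})$'' does not produce the reverse inclusion: since $\hH(\wt{B})\subset\hH(\wt{A})$, the Bridge Theorem gives $\hH(\Pi_{\wt{B}}(\wt{A}))=\hH(\wt{A})\cap\hH(\wt{B})=\hH(\wt{B})$, whence by Lemma~\ref{lem:pathunion} one gets $\Pi_{\wt{B}}(\wt{A})=\wt{B}$ --- true but unhelpful.

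The missing idea is the crux of the paper's proof. After the Bridge Theorem one can show (using the product structure of the bridge together with the hyperplane-distance formula) that $\Pi_{\wt{A}}$ restricts to an isometric \emph{embedding} $\wt{B}\hookrightarrow\wt{A}$ with image $\wt{C}$. To conclude $\hH(\wt{A})\subset\hH(\wt{B})$ one must show this embedding is \emph{surjective}, and this does not follow from the abstract isomorphism $\wt{B}\cong\wt{A}$: a CAT(0) cube complex can embed properly as a convex subcomplex of an isomorphic copy of itself (e.g.\ a half-plane in the standard cubulation of $\mathbb{R}^2$). The paper exploits cocompactness to rule this out: for each vertex $\ti{b}$ it records the sequence of ball sizes $(f_{\wt{B}}(\ti{b},R))_{R\ge0}$, notes that the embedding forces $(f_{\wt{B}}(\ti{b},R))_{R\ge0}\le (f_{\wt{A}}(\Pi_{\wt{A}}(\ti{b}),R))_{R\ge0}$ termwise with equality for all $R$ simultaneously iff $\Pi_{\wt{A}}(\wt{B})=\wt{A}$, observes that $\wt{A}$ and $\wt{B}$ (both being the universal cover of the finite complex $Z$) realize the \emph{same finite set} of such sequences as the basepoint varies, and then chooses $\ti{b}$ to realize a $\le$-maximal sequence, forcing equality. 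This compactness/extremality step is what is missing from your sketch.
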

\begin{proof}
	First, we show that the projection map $\Pi_{\wt{A}}:\wt{B}\to\wt{A}$ is a combinatorial embedding (i.e. it identifies $\wt{B}$ with a subcomplex of $\wt{A}$).
	Apply the Bridge Theorem to $\wt{A}$ and $\wt{B}$. Since $\hH(\wt{B})\subset\hH(\wt{A})$, we have $\hH(\wt{C})=\hH(\wt{B})$.
	Then $\phi(\wt{C}\times\wt{D})\cap\wt{B}=\phi(\wt{C}\times\{\tilde{b}\})$ is a convex subcomplex of $\wt{B}$ with $\hH(\phi(\wt{C}\times\{\tilde{b}\}))=\hH(\wt{C})=\hH(\wt{B})$.
	By Lemma \ref{lem:pathunion}, $\wt{B}=\phi(\wt{C}\times\{\tilde{b}\})$.
	Now, for each vertex $\ti{c}\in\wt{C}$, Theorem \ref{thm:bridge}\ref{item:sephyp} implies that the set of hyperplanes separating $\phi(\ti{c},\ti{a})$ from $\phi(\ti{c},\ti{b})$ is the same as the set of hyperplanes separating $\wt{A}$ from $\wt{B}$, and is equal to $\hH(\wt{D})$.
	Since the combinatorial distance between a pair of vertices in a CAT(0) cube complex is equal to the number of hyperplanes that separate them, we deduce that $\Pi_{\wt{A}}(\ti{c},\ti{b})=(\ti{c},\ti{a})$.
	This holds for all $\ti{c}\in\wt{C}$, so $\Pi_{\wt{A}}$ restricts to an isomorphism $\wt{B}=\phi(\wt{C}\times\{\tilde{b}\})\to \wt{C}=\phi(\wt{C}\times\{\tilde{a}\})$, which is a combinatorial embedding into $\wt{A}$. 

	Now we show that the projection map $\Pi_{\wt{A}}:\wt{B}\to\wt{A}$ is actually an isomorphism, from which it follows that $\hH(\wt{A})\subset\hH(\wt{B})$ (again by the Bridge Theorem), so $\hH(\wt{B})=\hH(\wt{A})$.
	For a vertex $\tilde{b}\in\wt{B}$ and an integer $R\geq0$, let $f_{\wt{B}}(\tilde{b},R)$ denote the number of vertices in the ball of radius $R$ in $\wt{B}$ about $\tilde{b}$  (working in the combinatorial metric of $\wt{B}$ -- which coincides with the combinatorial metric of $\wt{X}$ since $\wt{B}$ is convex).
	Similarly, let $f_{\wt{A}}(\tilde{a},R)$ denote the number of vertices in the ball of radius $R$ in $\wt{A}$ about a vertex $\tilde{a}$.
	For two sequences of integers $(\alpha_R)_{R\geq0}$ and $(\beta_R)_{R\geq0}$, write $(\alpha_R)_{R\geq0}\leq(\beta_R)_{R\geq0}$ if $\alpha_R\leq \beta_R$ for all $R$.
	This defines a partial order on the set of integer sequences.
	Since $\Pi_{\wt{A}}:\wt{B}\to\wt{A}$ is a combinatorial embedding, for any vertex $\tilde{b}\in\wt{B}$ we have
	\begin{equation}\label{seqcompare}
(f_{\wt{B}}(\tilde{b},R))_{R\geq0}\leq(f_{\wt{A}}(\Pi_{\wt{A}}(\tilde{b}),R))_{R\geq0},
	\end{equation}
with equality if and only if $\Pi_{\wt{A}}(\wt{B})=\wt{A}$.
	But $\wt{A},\wt{B}$ are both elevations of $Z$, so they are isometric (they can both be identified with the universal cover of $Z$), so the set of sequences $(f_{\wt{A}}(\tilde{a},R))_{R\geq0}$ is the same as the set of sequences $(f_{\wt{B}}(\tilde{b},R))_{R\geq0}$.
	Moreover, since $\wt{A}$ and $\wt{B}$ are cocompact, this is a finite set of sequences.
	Hence, we may take $\ti{b}\in\wt{B}$ with $(f_{\wt{B}}(\tilde{b},R))_{R\geq0}$ a $\leq$-maximal sequence, and (\ref{seqcompare}) is forced to be an equality.
	Therefore $\Pi_{\wt{A}}(\wt{B})=\wt{A}$, as required.
\end{proof}

\begin{nota}
	If $X$ is a non-positively curved cube complex with fundamental group $G$, and $\wt{X}\to X$ is the universal cover, then we have an action by deck transformations $G\acts\wt{X}$. If $\wt{A}\subset\wt{X}$ is a convex subcomplex then we write $G_{\wt{A}}$ for the $G$-stabilizer of $\wt{A}$.
\end{nota}

The Bridge Theorem has the following consequence for weakly special cube complexes (see Definition \ref{defn:special} for the notion of weakly special).
This proposition is similar to \cite[Proposition 5.3]{Shepherd23}.

\begin{prop}\label{prop:projection}
	Let $A$ and $B$ be locally convex subcomplexes in a weakly special cube complex $X$, and let $G=\pi_1(X)$. Let $\wt{X}\to X$ be the universal cover and let $\wt{A},\wt{B}\subset\wt{X}$ be elevations of $A$ and $B$. Let $\wt{C}=\Pi_{\wt{A}}(\wt{B})$.
	Then $G_{\wt{C}}=G_{\wt{A}}\cap G_{\wt{B}}$ and $\wt{C}/G_{\wt{C}}$ embeds in $X$.	
\end{prop}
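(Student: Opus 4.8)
The plan is to use the Bridge Theorem (Theorem \ref{thm:bridge}) together with Lemma \ref{lem:hH=hH} and Lemma \ref{lem:stabilize}. Write $\wt{C}=\Pi_{\wt{A}}(\wt{B})$, so that $\hH(\wt{C})=\hH(\wt{A})\cap\hH(\wt{B})$. The inclusion $G_{\wt{A}}\cap G_{\wt{B}}\subset G_{\wt{C}}$ is immediate: if $g$ stabilizes both $\wt{A}$ and $\wt{B}$, then it stabilizes $\Pi_{\wt{A}}(\wt{B})$ because the projection map $\Pi_{\wt{A}}$ is $G_{\wt{A}}$-equivariant. For the reverse inclusion, suppose $g\in G_{\wt{C}}$. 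Then $g$ stabilizes $\hH(\wt{C})=\hH(\wt{A})\cap\hH(\wt{B})$. The key point is that $g\wt{A}$ is again an elevation of $A$ with $\hH(\wt{C})\subset\hH(g\wt{A})$, and similarly $\hH(\wt{A})\supset\hH(\wt{C})=\hH(g^{-1}\wt{C})$, so applying Lemma \ref{lem:hH=hH} to the two elevations $\wt{A}$ and $g^{-1}\wt{A}$ (or, after translating, to $\wt{A}$ and $g\wt{A}$) — noting that $\hH(\wt{C})\subset\hH(g\wt{A})\cap\hH(\wt{A})$ and that $\wt{C}$ projects isomorphically into each — one concludes that $g$ permutes the hyperplane sets in a controlled way. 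I would argue: since $g\wt{C}=\wt{C}$ and $\wt C$ sits inside $\wt A$ with $\hH(\wt C)\subsetneq$ or $=\hH(\wt A)$, we have $g\wt{A}$ and $\wt{A}$ both containing $\wt{C}$; by Lemma \ref{lem:hH=hH} applied to these two elevations of $A$, the inclusion $\hH(\wt C)\subset \hH(g\wt A)\cap\hH(\wt A)$ forces $\hH(g\wt A)=\hH(\wt A)$, and then Lemma \ref{lem:stabilize} (with a vertex of $\wt C$ fixed setwise by $g$) gives $g\wt{A}=\wt{A}$. Symmetrically $g\wt{B}=\wt{B}$, so $g\in G_{\wt{A}}\cap G_{\wt{B}}$.

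More carefully, to invoke Lemma \ref{lem:hH=hH} I need $\hH$ of one elevation contained in $\hH$ of the other. Since $\wt C\subset\wt A\cap g\wt A$ (both contain $\wt C$ as $g$ fixes $\wt C$ and $\wt C\subset\wt A$), write $\hH(\wt C)\subset\hH(\wt A)$ and $\hH(\wt C)\subset\hH(g\wt A)$. To compare $\wt A$ and $g\wt A$ directly I would instead project: $\Pi_{\wt A}(g\wt A)$ is a convex subcomplex whose hyperplane set is $\hH(\wt A)\cap\hH(g\wt A)\supset\hH(\wt C)$, and $g\wt A$, $\wt A$ are both elevations of $A$, hence isometric and cocompact. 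The cleanest route is to run the ball-counting / maximal-sequence argument of Lemma \ref{lem:hH=hH} once more, or simply to quote Lemma \ref{lem:hH=hH} after checking one containment holds. I would check: since $\hH(\wt C)\subset\hH(g\wt A)$ and $\hH(\wt C)\subset\hH(\wt A)$, and $g$ conjugates the stabilizer setup, a short argument shows $\hH(g\wt A)\subset\hH(\wt A)$ or vice versa — at which point Lemma \ref{lem:hH=hH} upgrades this to equality and Lemma \ref{lem:stabilize} finishes. This is the step I expect to require the most care, since getting a containment of hyperplane sets between $\wt A$ and $g\wt A$ (rather than just between each and $\wt C$) is not automatic; I anticipate using that $\wt C$ is the projection, so any hyperplane in $\hH(\wt A)\setminus\hH(\wt C)$ separates some vertex of $\wt A$ from $\wt B$, combined with $g$-invariance of $\wt C$ and weak specialness to rule out the "nested translate" phenomenon exactly as in Lemma \ref{lem:vspeccocomp}.

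For the second assertion, that $\wt{C}/G_{\wt{C}}$ embeds in $X$: since $\wt C$ is convex and $G_{\wt C}=G_{\wt A}\cap G_{\wt B}$ acts on it, the quotient map $\wt C\to X$ factors through $\wt C/G_{\wt C}$, and this composite $\wt C/G_{\wt C}\to X$ is a local isometry (it is a restriction of the covering $\wt X\to X$ to a convex, hence locally convex, subcomplex, quotiented by the stabilizer). To see it is injective, suppose $\ti c_1,\ti c_2\in\wt C$ map to the same point of $X$; then $\ti c_2=g\ti c_1$ for some $g\in G$, and I must show $g\in G_{\wt C}$. Now $g\wt C$ is a convex subcomplex meeting $\wt C$ (it contains $g\ti c_1=\ti c_2\in\wt C$); moreover $g\wt A$ is an elevation of $A$ containing $g\wt C$, and $\hH(g\wt C)=\hH(g\wt A)\cap\hH(g\wt B)$. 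Using the covering-space translation dictionary of Remark \ref{remk:univelev} and the fact (already in hand) that $G_{\wt C}=G_{\wt A}\cap G_{\wt B}$, it suffices to show $g\wt A=\wt A$ and $g\wt B=\wt B$. This again reduces to a hyperplane-set comparison: $g\wt A$ and $\wt A$ are elevations of $A$ with $\wt C\cap g\wt C\neq\emptyset$, and weak specialness together with Lemma \ref{lem:hH=hH} and Lemma \ref{lem:stabilize} pins them down — this is essentially the same mechanism as in the first part. So the main obstacle, concentrated in the first part, is establishing that two elevations of $A$ sharing a large enough convex subcomplex must coincide; once that is done, both conclusions follow formally.
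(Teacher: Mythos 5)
Your proposal has genuine gaps, and the approach diverges from the paper's in a way that does not close.

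First, you overwork the easy half. To see $g\in G_{\wt{A}}$ when $\ti c_2 = g\ti c_1$ with $\ti c_1,\ti c_2\in\wt{C}$: since $\wt{C}\subset\wt{A}$, both $\ti c_1,\ti c_2$ lie in $\wt{A}$ and descend to the same point of $A\subset X$. Because $A=\wt{A}/G_{\wt{A}}$ embeds in $X$ and $G$ acts freely, the element $g$ is forced to lie in $G_{\wt{A}}$. No appeal to Lemma \ref{lem:hH=hH} or \ref{lem:stabilize} is needed, and in fact Lemma \ref{lem:hH=hH} cannot be invoked here: its hypotheses require $X$ to be \emph{finite}, which Proposition \ref{prop:projection} does not assume, and it also requires a containment $\hH(\cdot)\subset\hH(\cdot)$ between the two elevations which, as you yourself flag, you do not have.

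Second, and more seriously, the claim that ``symmetrically $g\wt{B}=\wt{B}$'' does not hold. The construction $\wt{C}=\Pi_{\wt{A}}(\wt{B})$ is not symmetric in $\wt{A}$ and $\wt{B}$: the vertices $\ti c_1,\ti c_2\in\wt{C}$ lie in $\wt{A}$ but generally not in $\wt{B}$, so the embedding of $B$ in $X$ gives you nothing directly, and the ``same mechanism'' simply does not run. This is exactly where the paper's proof does real work, and it is the step your proposal leaves open. The paper's route is to apply the Bridge Theorem to get the product $\phi:\wt{C}\times\wt{D}\to\Hull(\wt{C}\cup\wt{D})$ and a path $\gamma$ in $\wt{D}$ from $\ti a$ to $\ti b$; then $\phi(\{\ti c_1\}\times\gamma)$ and $\phi(\{\ti c_2\}\times\gamma)$ are paths crossing the same sequence of hyperplanes, and since $X$ is weakly special (no self-intersections or self-osculations), their images in $X$ coincide. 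This forces $g\phi(\ti c_1,\ti b)=\phi(\ti c_2,\ti b)$, and these two vertices lie in $\wt{B}$, whence $g\in G_{\wt{B}}$ because $B$ embeds. This transport-along-the-bridge step is the whole content of the argument, and it is absent from your proposal; weak specialness enters precisely here, and not via Lemma \ref{lem:hH=hH} (which has nothing to do with weak specialness at all).

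So: the easy direction $G_{\wt{A}}\cap G_{\wt{B}}\subset G_{\wt{C}}$ is fine, and the inclusion $g\in G_{\wt{A}}$ is fine once you replace the detour through Lemma \ref{lem:hH=hH} by the direct embedding argument, but the deduction $g\in G_{\wt{B}}$ is not symmetric and needs the Bridge Theorem plus weak specialness, which you have not supplied.
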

\begin{proof}
	First note that for $g\in G_{\wt{A}}\cap G_{\wt{B}}$ we have
	$$g\wt{C}=g\Pi_{\wt{A}}(\wt{B})=\Pi_{g\wt{A}}(g\wt{B})=\Pi_{\wt{A}}(\wt{B})=\wt{C}.$$
	So $G_{\wt{A}}\cap G_{\wt{B}}\subset G_{\wt{C}}$.
	
	Now suppose that vertices $\tilde{x},\tilde{y}\in\wt{C}$ map to the same vertex in $X$. Then there is $g\in G$ with $g\tilde{x}=\tilde{y}$. Since $\tilde{x},\tilde{y}\in\wt{A}$ and $A=\wt{A}/G_{\wt{A}}$ embeds in $X$, we have $g\in G_{\wt{A}}$.
	Now apply the Bridge Theorem (Theorem \ref{thm:bridge}).
	Then $\ti{x}=\phi(\ti{x},\ti{a})$ and $\ti{y}=\phi(\ti{y},\ti{a})$, where $\ti{a}$ is as in Theorem \ref{thm:bridge}.
	Let $\gamma$ be a path in $\wt{D}$ from $\ti{a}$ to $\ti{b}$ (with $\ti{b}$ is as in Theorem \ref{thm:bridge}).
	Then $\phi(\{\ti{x}\}\times\gamma)$ and $\phi(\{\ti{y}\}\times\gamma)$ are paths in $\wt{X}$ starting at $\ti{x}$ and $\ti{y}$ respectively, which cross the same sequence of hyperplanes (see Figure \ref{fig:bridgepaths}). Their images in $X$ are paths with the same start point which again cross the same sequence of hyperplanes.
	Since hyperplanes in $X$ do not self-intersect or self-osculate, we deduce that $\phi(\{\ti{x}\}\times\gamma)$ and $\phi(\{\ti{y}\}\times\gamma)$ descend to the same path in $X$.
	As $g\tilde{x}=\tilde{y}$, we must have that $g\phi(\{\ti{x}\}\times\gamma)=\phi(\{\ti{y}\}\times\gamma)$, so in particular $g\phi(\ti{x},\ti{b})=\phi(\ti{y},\ti{b})$.
	But $\phi(\ti{x},\ti{b}),\phi(\ti{y},\ti{b})\in\wt{B}$, and $B=\wt{B}/G_{\wt{B}}$ embeds in $X$, so $g\in G_{\wt{B}}$.
	Hence $g\in G_{\wt{A}}\cap G_{\wt{B}}\subset G_{\wt{C}}$.
	Therefore $\wt{C}/G_{\wt{C}}$ embeds in $X$.
	
	The argument from the previous paragraph still works if our initial assumption was that $g\in G_{\wt{C}}$, so we also get $G_{\wt{C}}\subset G_{\wt{A}}\cap G_{\wt{B}}$.	
\end{proof}

\begin{figure}[H]
	\centering
	\scalebox{0.8}{
		\begin{tikzpicture}[auto,node distance=2cm,
			thick,every node/.style={circle,draw,font=\small},
			every loop/.style={min distance=2cm},
			hull/.style={draw=none},
			]
			\tikzstyle{label}=[draw=none,font=\huge]
			
			\draw[rounded corners=20pt] (-4, -1) rectangle (0, 5) {};
			\draw[rounded corners=20pt] (6, -1) rectangle (10, 5) {};
			\draw (0,0) rectangle (6,4) {};
			\path (0,1) edge [blue,postaction={decoration={markings,mark=at position 0.8 with {\arrow[blue,line width=1mm]{triangle 60}}},decorate}] (6,1);
			\path (0,3) edge [blue,postaction={decoration={markings,mark=at position 0.8 with {\arrow[blue,line width=1mm]{triangle 60}}},decorate}] (6,3);
			\draw[Green,line width=5pt] (0,0)--(0,4);
			\node[fill] at (0,1){};
			\node[fill] at (0,3){};
			
			\node[label] (A) at (-2,4.2) {$\wt{A}$};
			\node[label] (B) at (8,4.2) {$\wt{B}$};
			\node[label,Green] (C) at (-1,2) {$\wt{C}$};
			\node[label,font=\LARGE] (x) at (-0.5,1) {$\ti{x}$};
			\node[label,font=\LARGE] (y) at (-0.5,3) {$\ti{y}$};
			\node[label,blue,font=\LARGE] (px) at (2.5,1.5) {$\phi(\{\ti{x}\}\times\gamma)$};
			\node[label,blue,font=\LARGE] (py) at (2.5,3.5) {$\phi(\{\ti{y}\}\times\gamma)$};
						
		\end{tikzpicture}
	}
	\caption{The proof of Proposition \ref{prop:projection}.}\label{fig:bridgepaths}
\end{figure}
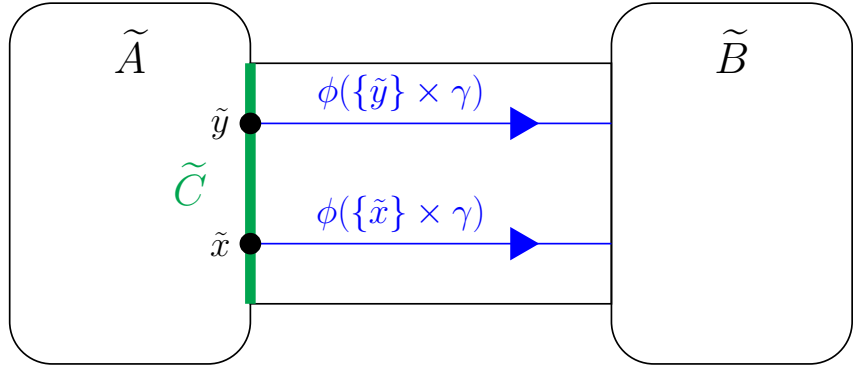

\subsection{Orthogonal complements}

\begin{defn}(Orthogonal complements)\\
Let $\wt{X}$ be a CAT(0) cube complex, let $\wt{A}\subset\wt{X}$ be a convex subcomplex and let $\ti{x}\in\wt{A}$ be a vertex. A convex subcomplex $\wt{A}'$ is \emph{parallel} to $\wt{A}$ if $\hH(\wt{A})=\hH(\wt{A}')$.
Following \cite[Definition 1.10]{HagenSusse20}, we define the \emph{orthogonal complement of $\wt{A}$ at $\ti{x}$} to be the convex subcomplex $\wt{B}$ such that:
\begin{enumerate}
	\item $\ti{x}\in\wt{B}$,
	\item $\wt{B}$ is orthogonal to $\wt{A}$, and
	\item the set of convex subcomplexes parallel to $\wt{A}$ is precisely the collection of slices $\phi(\wt{A}\times\{\ti{b}\})$ (with $\ti{b}\in\wt{B}$ a vertex) coming from the isomorphism $\phi: \wt{A}\times\wt{B}\to\Hull(\wt{A}\cup\wt{B})$.
\end{enumerate} 
Equivalently, one could define $\wt{B}$ as the union of all convex subcomplexes containing $\ti{x}$ that are orthogonal to $\wt{A}$.
We also use the notation $\wt{P}_{\wt{A}}$ for the product subcomplex $\phi( \wt{A}\times\wt{B})=\Hull(\wt{A}\cup\wt{B})$.
\end{defn}

\begin{lem}\cite[Lemma 1.11]{HagenSusse20}\\\label{lem:orthcomphyps}
Let $\wt{X}$ be a CAT(0) cube complex, let $\wt{A}\subset\wt{X}$ be a convex subcomplex and let $\wt{B}$ be the orthogonal complement of $\wt{A}$ at some vertex $\ti{x}\in\wt{A}$.
Then $\hH(\wt{B})$ is the set of hyperplanes in $\wt{X}$ that are transverse to every hyperplane in $\hH(\wt{A})$.
\end{lem}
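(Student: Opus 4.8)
The plan is to prove the two inclusions separately. Write $\mathcal{T}$ for the set of hyperplanes of $\wt{X}$ transverse to every hyperplane in $\hH(\wt{A})$. The inclusion $\hH(\wt{B})\subseteq\mathcal{T}$ is immediate from the definitions: $\wt{B}$ is by construction orthogonal to $\wt{A}$, so Definition \ref{defn:orthogonal} already says every hyperplane in $\hH(\wt{B})$ is transverse to every hyperplane in $\hH(\wt{A})$. The content is therefore the reverse inclusion: given $\wt{H}\in\mathcal{T}$, show that $\wt{H}$ crosses $\wt{B}$.

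So fix such an $\wt{H}$. First note $\wt{H}\notin\hH(\wt{A})$ — a hyperplane is never transverse to itself, and the transversality condition is vacuous when $\hH(\wt{A})=\varnothing$ — so $\wt{A}$ lies in a single halfspace of $\wt{H}$; call it $\wt{H}^{+}$. The strategy is to produce two convex subcomplexes $\wt{A}_1,\wt{A}_1'$, each \emph{parallel} to $\wt{A}$ (i.e.\ $\hH(\wt{A}_i)=\hH(\wt{A})$), such that $\wt{H}$ separates $\wt{A}_1$ from $\wt{A}_1'$. Granting this, property (3) in the definition of orthogonal complement says that $\wt{A}_1$ and $\wt{A}_1'$, being convex subcomplexes parallel to $\wt{A}$, are both slices $\phi(\wt{A}\times\{\ti{b}\})$ of $\wt{P}_{\wt{A}}=\Hull(\wt{A}\cup\wt{B})$; since $\wt{P}_{\wt{A}}$ is connected there is a path in it joining $\wt{A}_1$ to $\wt{A}_1'$, and this path crosses $\wt{H}$, so $\wt{H}\in\hH(\wt{P}_{\wt{A}})=\hH(\wt{A})\cup\hH(\wt{B})$. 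As $\wt{H}\notin\hH(\wt{A})$, this forces $\wt{H}\in\hH(\wt{B})$, as required.

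To produce $\wt{A}_1$ and $\wt{A}_1'$ I would work inside the carrier $N(\wt{H})$, which is a convex subcomplex of $\wt{X}$ and, since every hyperplane of a CAT(0) cube complex is two-sided, splits as a product $N(\wt{H})\cong\wt{H}\times I$ with $I$ a single edge dual to $\wt{H}$; call $\wt{H}^{(0)},\wt{H}^{(1)}$ the two boundary copies of $\wt{H}$ in this product, labelled so that $\wt{H}^{(0)}\subset\wt{H}^{+}$. Set $\wt{A}_1:=\Pi_{N(\wt{H})}(\wt{A})$. Since $\wt{H}\in\mathcal{T}$, every hyperplane in $\hH(\wt{A})$ is transverse to $\wt{H}$ and so crosses $N(\wt{H})$, giving $\hH(\wt{A})\subseteq\hH(N(\wt{H}))$; the Bridge Theorem (Theorem \ref{thm:bridge}, applied with $N(\wt{H})$ in the role of $\wt{A}$ and $\wt{A}$ in the role of $\wt{B}$) then yields $\hH(\wt{A}_1)=\hH(N(\wt{H}))\cap\hH(\wt{A})=\hH(\wt{A})$, so $\wt{A}_1$ is parallel to $\wt{A}$. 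Because $\wt{H}$ cannot separate any point from $N(\wt{H})$ (the carrier straddles $\wt{H}$), the projection keeps each point of $\wt{A}$ on its own side of $\wt{H}$, so $\wt{A}_1\subset N(\wt{H})\cap\wt{H}^{+}=\wt{H}^{(0)}$. Let $\wt{A}_1'\subset\wt{H}^{(1)}$ be the image of $\wt{A}_1$ under the product isomorphism $\wt{H}^{(0)}\cong\wt{H}^{(1)}$. The product structure of $N(\wt{H})$ then shows $\hH(\wt{A}_1')=\hH(\wt{A}_1)=\hH(\wt{A})$ and that $\wt{H}$ separates $\wt{A}_1$ from $\wt{A}_1'$, which is exactly what the previous paragraph requires.

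The step I expect to demand the most care is verifying that $\wt{A}_1=\Pi_{N(\wt{H})}(\wt{A})$ really is a parallel copy of $\wt{A}$ sitting inside a single boundary copy of $\wt{H}$ in the carrier: this is where the Bridge Theorem has to be combined cleanly with the product decomposition $N(\wt{H})\cong\wt{H}\times I$, with care about which side of $\wt{H}$ everything lands on. The remaining points — that $\hH(\wt{A}_1')=\hH(\wt{A})$, that $\wt{H}$ separates $\wt{A}_1$ from $\wt{A}_1'$, and that $\hH(\wt{P}_{\wt{A}})=\hH(\wt{A})\cup\hH(\wt{B})$ — are routine manipulations with hyperplane sets and convex hulls, using Lemma \ref{lem:pathunion} where convenient.
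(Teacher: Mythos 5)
The paper does not prove this lemma itself: it is imported wholesale as a citation to \cite[Lemma 1.11]{HagenSusse20}, with no argument given, so there is no in-paper proof to compare against. Judged on its own, your argument is correct and complete. The forward inclusion $\hH(\wt{B})\subseteq\mathcal{T}$ does indeed fall straight out of Definition \ref{defn:orthogonal}. For the reverse inclusion, the key move — projecting $\wt{A}$ onto the carrier $N(\wt{H})$, using the Bridge Theorem together with $\hH(\wt{A})\subseteq\hH(N(\wt{H}))$ to conclude that $\wt{A}_1=\Pi_{N(\wt{H})}(\wt{A})$ is a parallel copy of $\wt{A}$ landing inside the boundary copy $\wt{H}^{(0)}$, and then pushing it across to $\wt{H}^{(1)}$ to get a second parallel copy $\wt{A}_1'$ separated from $\wt{A}_1$ by $\wt{H}$ — is exactly what is needed, and property (3) of the definition then forces $\wt{H}$ to cross $\wt{P}_{\wt{A}}=\Hull(\wt{A}\cup\wt{B})$, hence $\wt{B}$. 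The supporting facts you invoke (that a hyperplane never separates a point from its own carrier, so projections onto $N(\wt{H})$ preserve the side of $\wt{H}$; that $\hH(N(\wt{H}))=\{\wt{H}\}\cup\{$hyperplanes transverse to $\wt{H}\}$; that $\hH(\Hull(\wt{A}\cup\wt{B}))=\hH(\wt{A})\cup\hH(\wt{B})$ for orthogonal $\wt{A},\wt{B}$) are all standard and correctly used. The only place worth being a touch more explicit is that the case $\hH(\wt{A})=\varnothing$ forces $\wt{A}=\{\ti{x}\}$ and $\wt{B}=\wt{X}$, in which case both sides of the asserted equality are all of $\hH(\wt{X})$; you flag this in passing but it never threatens the argument.
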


We have the following proposition about orthogonal complements and weakly special cube complexes. 

\begin{prop}\label{prop:orthcomp}
Let $X$ be a weakly special cube complex and $G=\pi_1(X)$. Let $\wt{X}\to X$ be the universal cover, $\wt{A}\subset\wt{X}$ a convex subcomplex and $\ti{x}\in\wt{A}$ a vertex. Let $\wt{B}$ be the orthogonal complement of $\wt{A}$ at $\ti{x}$. Then:
\begin{enumerate}
	\item\label{item:Bembed} $\wt{B}/G_{\wt{B}}$ embeds in $X$.
	\item\label{item:ABcommute} $G_{\wt{A}}$ and $G_{\wt{B}}$ commute.
	\item\label{item:PApreserved} $G_{\wt{A}}$ and $G_{\wt{B}}$ preserve $\wt{P}_{\wt{A}}=\phi(\wt{A}\times\wt{B})$, moreover $G_{\wt{A}}$ (resp. $G_{\wt{B}}$) acts on the $\wt{A}$ factor (resp. $\wt{B}$ factor) in the natural way and it acts trivially on the other factor.
\end{enumerate}
\end{prop}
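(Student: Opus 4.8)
The plan is to prove part (3) in a fairly explicit form, from which (2) follows immediately, and to prove (1) by the same circle of ideas. The two technical ingredients, both borrowed from the proof of Proposition \ref{prop:projection}, are: (a) if two edge-paths in a weakly special cube complex have the same initial vertex and cross the same sequence of hyperplanes then they coincide (otherwise some hyperplane self-osculates), so that unique path lifting can transport information between elevations; and (b) a convex subcomplex of a CAT(0) cube complex is determined by its set of hyperplanes together with any single vertex it contains (this is Lemma \ref{lem:pathunion}, and underlies Lemma \ref{lem:stabilize}). Throughout, the hyperplane description of the orthogonal complement, Lemma \ref{lem:orthcomphyps}, is what lets one pass from ``preserves $\hH(\wt{A})$'' to ``preserves $\hH(\wt{B})$''.

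For (1): suppose vertices $\ti{y},\ti{z}\in\wt{B}$ have the same image in $X$, say $\ti{z}=g\ti{y}$; I must show $g\in G_{\wt{B}}$. For each vertex $\ti{a}\in\wt{A}$ choose a path $\alpha$ in $\wt{A}$ from $\ti{x}$ to $\ti{a}$; inside $\wt{P}_{\wt{A}}\cong\wt{A}\times\wt{B}$ the paths $\phi(\alpha\times\{\ti{y}\})$ and $\phi(\alpha\times\{\ti{z}\})$ have parallel corresponding edges, hence cross the same sequence of hyperplanes of $\wt{X}$. Their images in $X$ therefore start at the same vertex and cross the same hyperplanes, so by (a) they are equal; then $g\cdot\phi(\alpha\times\{\ti{y}\})$ and $\phi(\alpha\times\{\ti{z}\})$ are lifts of this common path starting at the same vertex $g\ti{y}=\ti{z}$, so by unique path lifting $g\cdot\phi(\ti{a},\ti{y})=\phi(\ti{a},\ti{z})$. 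Letting $\ti{a}$ vary, $g$ carries the slice $\phi(\wt{A}\times\{\ti{y}\})$ onto $\phi(\wt{A}\times\{\ti{z}\})$; both are parallel to $\wt{A}$, so $g\hH(\wt{A})=\hH(\wt{A})$, and then $g$ preserves $\hH(\wt{B})$ by Lemma \ref{lem:orthcomphyps}. Since also $g\ti{y}=\ti{z}\in\wt{B}$, Lemma \ref{lem:stabilize} gives $g\in G_{\wt{B}}$.

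For (3) with $g\in G_{\wt{A}}$: since $g$ preserves $\hH(\wt{A})$ it preserves $\hH(\wt{B})$ (Lemma \ref{lem:orthcomphyps}), hence $\hH(\wt{P}_{\wt{A}})=\hH(\wt{A})\sqcup\hH(\wt{B})$; as $g\ti{x}\in\wt{A}\subset\wt{P}_{\wt{A}}$ also lies in $g\wt{P}_{\wt{A}}$, fact (b) forces $g\wt{P}_{\wt{A}}=\wt{P}_{\wt{A}}$. Since $g$ preserves both hyperplane classes it respects the product $\wt{A}\times\wt{B}$, so $g|_{\wt{P}_{\wt{A}}}=(g|_{\wt{A}})\times\bar{g}$ for an automorphism $\bar{g}$ of $\wt{B}$ with $\bar{g}\ti{x}=\ti{x}$; to see $\bar{g}=\mathrm{id}$, apply (a) to the pair of paths $\beta$ (a path in $\wt{B}$ from $\ti{x}$ to a vertex $\ti{b}$) and $\phi(\{g\ti{x}\}\times\beta)$, which cross the same hyperplanes of $\wt{X}$ and hence descend to the same path of $X$ starting at the common image of $\ti{x}$ and $g\ti{x}$, so that $g\beta=\phi(\{g\ti{x}\}\times\beta)$ and comparing endpoints gives $\bar{g}\ti{b}=\ti{b}$. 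For $h\in G_{\wt{B}}$ one runs the same argument in the opposite order: first apply (a) to $\alpha$ (a path in $\wt{A}$) and $\phi(\alpha\times\{h\ti{x}\})$ to obtain $h\ti{a}=\phi(\ti{a},h\ti{x})$ for all $\ti{a}\in\wt{A}$; this says $h\wt{A}$ is a slice of $\wt{P}_{\wt{A}}$ parallel to $\wt{A}$, so $h$ preserves $\hH(\wt{A})$, hence $\wt{P}_{\wt{A}}$ by the previous reasoning, and the product-decomposition argument then yields $h|_{\wt{P}_{\wt{A}}}=\mathrm{id}_{\wt{A}}\times(h|_{\wt{B}})$. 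Finally (2) is immediate: on $\wt{P}_{\wt{A}}=\wt{A}\times\wt{B}$ the two actions commute, so $gh$ and $hg$ agree on the nonempty subcomplex $\wt{P}_{\wt{A}}$, and freeness of the $G$-action on $\wt{X}$ upgrades this to $gh=hg$ in $G$.

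I expect the main obstacle to be showing that $G_{\wt{B}}$ preserves $\wt{P}_{\wt{A}}$: unlike for $G_{\wt{A}}$, this cannot be read off the hyperplane combinatorics, because $\wt{A}$ need not be the orthogonal complement of $\wt{B}$ (so $h\in G_{\wt{B}}$ could a priori move $\wt{A}$ to a convex subcomplex with a different hyperplane set), and it is precisely here — together with the verification in both cases that the action on the ``far'' factor is trivial — that the weak-specialness / universal-cover argument (a) is essential. The remaining bookkeeping (tracking which slice goes to which, and checking the induced maps on the factors are honest cubical automorphisms — cleanest once $\bar{g}$, resp. the $\wt{A}$-part of $h$, is shown trivial, since then the map on the other factor is literally the restriction of $g$, resp. $h$, to a subcomplex it stabilizes) is routine.
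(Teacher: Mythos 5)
Your proof is correct and follows essentially the same strategy as the paper's: both arguments hinge on weak specialness forcing a deck transformation $g$ to carry slices parallel to $\wt{A}$ to slices parallel to $\wt{A}$, followed by Lemma \ref{lem:orthcomphyps} (to transfer from $\hH(\wt{A})$ to $\hH(\wt{B})$) and Lemma \ref{lem:stabilize} (to upgrade hyperplane-set stabilization to subcomplex stabilization); the only stylistic difference is that the paper argues edge-by-edge (if $g\ti{e}_1\neq\ti{e}_2$ then the hyperplanes $H(\ti{e}_2)$ and $H(g\ti{e}_1)$ would descend to a self-intersecting or self-osculating hyperplane in $X$) whereas you package the same observation at the path level via your ingredient (a) and unique path lifting. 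You have also correctly located the real content of the proof (showing $G_{\wt{B}}$ preserves $\wt{P}_{\wt{A}}$), which is precisely where the paper invokes weak specialness.
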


\begin{remk}
	If $X$ is finite, then the fact that $G_{\wt{B}}$ acts cocompactly on $\wt{B}$ follows from \cite[Proposition 5.1]{HagenSusse20} because $\wt{X}$ possesses a factor system ($X$ weakly special implies that the action of $G$ on $\wt{X}$ is rotational, so $\wt{X}$ has a factor system by \cite[Theorem A]{HagenSusse20}).
	If in addition $G_{\wt{A}}$ acts on $\wt{A}$ cocompactly, then Proposition \ref{prop:orthcomp} is similar to \cite[Corollary 2.13]{Fioravanti23} (note that $X$ weakly special in Proposition \ref{prop:orthcomp} implies that the action $G\acts\wt{X}$ is non-transverse in the terminology of \cite{Fioravanti23}).
\end{remk}

\begin{proof}[Proof of Proposition \ref{prop:orthcomp}]
	Let $g\in G$ and $\ti{b}\in\wt{B}$ a vertex with $g\ti{b}\in\wt{B}$. Let $\ti{e}_1$ be an edge incident at $\ti{b}$ which is parallel to $\wt{A}$ (i.e. $H(\ti{e}_1)\in\hH(\wt{A})$).
	Since $\wt{B}$ is orthogonal to $\wt{A}$, there is an edge $\ti{e}_2$ incident at $g\ti{b}$ which is parallel to $\ti{e}_1$.
	Either $\ti{e}_2=g\ti{e}_1$, or the hyperplanes $H(\ti{e}_1)=H(\ti{e}_2)$ and $H(g\ti{e}_1)$ intersect or osculate at $(g\ti{b},\ti{e}_2,g\ti{e}_1)$. Since these hyperplanes descend to the same hyperplane in $X$, and since $X$ is weakly special, it must be that $\ti{e}_2=g\ti{e}_1$.
	This argument extends to paths: for any path $\gamma$ that starts at $\ti{b}$ and is parallel to $\wt{A}$, we have that $\gamma$ is parallel to $g\gamma$.
	Therefore, $g$ stabilizes every hyperplane in $\hH(\wt{A})$.
	By Lemma \ref{lem:orthcomphyps}, $\hH(\wt{B})$ is the set of hyperplanes in $\wt{X}$ that are transverse to every hyperplane in $\hH(\wt{A})$, so $g$ also stabilizes the set of hyperplanes $\hH(\wt{B})$.
	It then follows from Lemma \ref{lem:stabilize} that $g$ stabilizes $\wt{B}$.
	This shows that $\wt{B}/G_{\wt{B}}$ embeds in $X$.
	
	Since the element $g\in G$ stabilizes both $\hH(\wt{A})$ and $\hH(\wt{B})$, it must also stabilize $\wt{P}_{\wt{A}}=\phi(\wt{A}\times\wt{B})$ by Lemma \ref{lem:stabilize}, preserving the product decomposition.
	We already established that each path $\gamma$ in $\wt{A}$ is mapped by $g$ to a path parallel to $\gamma$, so $g$ acts trivially on the $\wt{A}$ factor in $\wt{P}_{\wt{A}}=\phi(\wt{A}\times\wt{B})$.
	Since the restriction $\phi:\{\tilde{x}\}\times\wt{B}\to\wt{B}$ is induced by the identity map on  $\wt{B}$, the action of $g$ on the $\wt{B}$ factor is the natural one.
	
	A similar argument can be made, with the roles of $\wt{A}$ and $\wt{B}$ reversed, to show that $G_{\wt{A}}$ preserves $\wt{P}_{\wt{A}}=\phi(\wt{A}\times\wt{B})$, acting on the $\wt{A}$ factor in the natural way and acting trivially on the $\wt{B}$ factor.
	
	We have now proved parts \ref{item:Bembed} and \ref{item:PApreserved} of the lemma. Finally, part \ref{item:ABcommute} follows from part \ref{item:PApreserved} and the fact that $G$ acts freely on $\wt{X}$.	
\end{proof}

\bigskip
\section{Routes}\label{sec:routes}

In this section we introduce the notion of routes in non-positively curved cube complexes, which will be the primary geometric object that we use to study products of convex-cocompact subgroups. We will establish some elementary lemmas about routes and their elevations, and we will prove Proposition \ref{prop:routesep}, which provides the connection between elevations of routes and separability of products of convex-cocompact subgroups.

\subsection{Definitions and basic properties}

\begin{defn}(Routes)\\\label{defn:routes}
	Let $X$ be a non-positively curved cube complex.
	A \emph{route in $X$ of length $n$} is a tuple
	$$\fR=(y_0,Y_1,y_1,Y_2,y_2,\dots,Y_n,y_n),$$
	where $y_i\in X$ are vertices and $Y_i\to X$ are local isometries of non-positively curved cube complexes.
	In addition, each $Y_i$ is equipped with two vertices $y_{i-1}^i,y_i^i\in Y_i$ that map to $y_{i-1},y_i\in X$ respectively.
	(The maps $Y_i\to X$ and the vertices $y_{i-1}^i,y_i^i\in Y_i$ are all part of the data for $\fR$, but we suppress them from the tuple notation for brevity.)
	
	We will often work with $\fR$ an \emph{embedded route}, meaning that the maps $Y_i\to X$ are embeddings. (In this case the $Y_i$ are just locally convex subcomplexes of $X$, so the tuple notation for $\fR$ really does capture all the data.)
	The vertices $y_0$ and $y_n$ are referred to as the \emph{initial vertex of $\fR$} and the \emph{terminal vertex of $\fR$} respectively.
	The route $\fR$ is \emph{closed} if $y_0=y_n$, and it is \emph{finite} if each $Y_i$ is finite.
\end{defn}

\begin{defn}(Segmented paths)\\
	Let $X$ be a non-positively curved cube complex. A \emph{segmented path in $X$ of length $n$} is an $n$-tuple of paths $\delta=(\delta_1,\delta_2,\dots,\delta_n)$, such that the endpoint of $\delta_i$ is the startpoint of $\delta_{i+1}$ ($1\leq i\leq n-1$).
	The \emph{realization} of $\delta$ is the concatenation $\delta_1\delta_2\cdots\delta_n$, which is a path in $X$.
	We say that $\delta$ is \emph{closed} if its realization is closed.
	If $\delta$ is closed, then we say that it is \emph{essential} (resp. \emph{null-homotopic}) if its realization is essential (resp. null-homotopic).
	
	If $\hat{X}\to X$ is a cover, $\delta=(\delta_1,\delta_2,\dots,\delta_n)$ a segmented path in $X$, and $\hat{x}\in \hat{X}$, then a \emph{lift of $\delta$ based at $\hat{x}$} is a segmented path $\hat{\delta}=(\hat{\delta}_1,\hat{\delta}_2,\dots,\hat{\delta}_n)$ such that $\hat{\delta}_1$ is based at $\hat{x}$ and each $\hat{\delta}_i$ is a lift of $\delta_i$.
	As for lifts of ordinary paths, such $\hat{\delta}$ exists if and only if $\hat{x}$ is a lift of the startpoint of $\delta_1$, and in this case $\hat{\delta}$ is unique.
\end{defn}

\begin{defn}(Paths along routes)\\
	Let $\fR=(y_0,Y_1,y_1,Y_2,y_2,\dots,Y_n,y_n)$ be a route in $X$.
	A \emph{path along $\fR$} is a segmented path $\delta=(\delta_1,\delta_2,\dots,\delta_n)$, where $\delta_i$ is a path from $y_{i-1}$ to $y_i$ in $X$ that lifts to a path in $Y_i$ from $y_{i-1}^i$ to $y_i^i$.
	If $\fR$ is closed, then we say that $\fR$ is \emph{essential} if every path along $\fR$ is essential.	
\end{defn}

\begin{defn}(Elevations of routes)\\\label{defn:routeelevations}
	Let $\fR=(y_0,Y_1,y_1,Y_2,y_2,\dots,Y_n,y_n)$ be a route in $X$ and let $\hat{X}\to X$ be a cover.
	An \emph{elevation of $\fR$ to $\hat{X}$} is a route $\hat{\fR}=(\hat{y}_0,\hat{Y}_1,\hat{y}_1,\hat{Y}_2,\hat{y}_2,\dots,\hat{Y}_n,\hat{y}_n)$ in $\hat{X}$ such that each $\hat{Y}_i$ is an elevation of $Y_i$ to $\hat{X}$ that gives rise to the following commutative diagram of based spaces:
	\begin{equation}\label{routeelevation}
		\begin{tikzcd}[
			ar symbol/.style = {draw=none,"#1" description,sloped},
			isomorphic/.style = {ar symbol={\cong}},
			equals/.style = {ar symbol={=}},
			subset/.style = {ar symbol={\subset}}
			]
			(\hat{Y}_i,\hat{y}_{i-1}^i,\hat{y}_i^i)\ar{d}\ar{r}&(\hat{X},\hat{y}_{i-1},\hat{y}_i)\ar{d}\\
			(Y_i,y_{i-1}^i,y_i^i)\ar{r}&(X,y_{i-1},y_i)
		\end{tikzcd}
	\end{equation}
\end{defn}

\begin{remk}\label{remk:unirouteelev}
	If $\wt{X}\to X$ is the universal cover of $X$ and $G=\pi_1(X)\acts\wt{X}$ is the action by deck transformations, then for each route $\fR$ in $X$ we get a natural action of $G$ on the set of elevations of $\fR$ to $\wt{X}$ (thus generalising Remark \ref{remk:unibasedelev}).
	More precisely, if $\wt{\fR}=(\ti{y}_0,\wt{Y}_1,\ti{y}_1,\wt{Y}_2,\ti{y}_2,\dots,\wt{Y}_n,\ti{y}_n)$ is an elevation of $\fR$ to $\wt{X}$ and $g\in G$, then $g\wt{\fR}=(g\ti{y}_0,g\wt{Y}_1,g\ti{y}_1,g\wt{Y}_2,g\ti{y}_2,\dots,g\wt{Y}_n,g\ti{y}_n)$ is also an elevation of $\fR$ to $\wt{X}$, and the maps $g\wt{Y}_i\to Y_i$ are just the compositions $g\wt{Y}_i\overset{g^{-1}}{\longrightarrow}\wt{Y}_i\to Y_i$.
\end{remk}

Elevations of routes and lifts of paths along routes are intimately connected, as the following lemma demonstrates.

\begin{lem}\label{lem:routeelev}
	Let $\fR=(y_0,Y_1,y_1,Y_2,y_2,\dots,Y_n,y_n)$ be a route in a non-positively curved cube complex $X$. Let $\hat{X}\to X$ be a cover and $\hat{y}_0$ a lift of $y_0$.
	\begin{enumerate}
		\item\label{item:pathtoelev} For each $\delta$ a path along $\fR$, there is a unique elevation $\hat{\fR}$ of $\fR$ to $\hat{X}$ such that the lift $\hat{\delta}$ of $\delta$ based at $\hat{y}_0$ is a path along $\hat{\fR}$.
		\item\label{item:extendelev} If $1\leq i\leq j\leq n$ and $(\hat{y}_{i-1},\hat{Y}_i,\hat{y}_i,\dots,\hat{Y}_j,\hat{y}_j)$ is an elevation of $(y_{i-1},Y_i,y_i,\dots,Y_j,y_j)$ to $\hat{X}$, then this can be extended to an elevation $(\hat{y}_0,\hat{Y}_1,\hat{y}_1,\hat{Y}_2,\hat{y}_2,\dots,\hat{Y}_n,\hat{y}_n)$ of $\fR$ to $\hat{X}$.
		\item\label{item:elevtopath} For each elevation $\hat{\fR}$ of $\fR$ to $\hat{X}$, each path along $\hat{\fR}$ is a lift of a path along $\fR$.
		\item\label{item:esselev} If $\fR$ is an essential closed route, then every closed elevation of $\fR$ to $\hat{X}$ is essential.
		\item\label{item:essential} Assuming that $\fR$ is closed, $\fR$ is essential if and only if it has no closed elevations to the universal cover $\wt{X}\to X$.
	\end{enumerate}
\end{lem}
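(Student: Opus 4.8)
The plan is to prove the five parts in order, treating \ref{item:pathtoelev} and \ref{item:extendelev} as the substantive constructions and \ref{item:elevtopath}--\ref{item:essential} as consequences. Three facts will be used repeatedly. First, an elevation $\hat Y_i$ of $Y_i$ to $\hat X$ comes equipped with a covering $\nu_i\colon\hat Y_i\to Y_i$ and a local isometry $\hat\phi_i\colon\hat Y_i\to\hat X$ satisfying $\mu\circ\hat\phi_i=\phi_i\circ\nu_i$, where $\phi_i\colon Y_i\to X$ is the defining local isometry; so a path in $Y_i$ lifts uniquely to $\hat Y_i$ once a starting vertex over its start is chosen. Second, a local isometry of non-positively curved cube complexes is injective on the edges at each vertex, so a path in $X$ has \emph{at most one} lift to $Y_i$ from a given vertex over its start (existence may fail, but it is part of the hypothesis whenever we invoke it). Third, by Definition \ref{defn:basedelevations} a based elevation is exactly the component of the pullback of $\phi_i$ and $\mu$ containing the prescribed vertex, hence is pinned down by that vertex.

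For \ref{item:pathtoelev}: given a path $\delta=(\delta_1,\dots,\delta_n)$ along $\fR$, lift its realization to $\hat X$ starting at $\hat y_0$ to get $\hat\delta=(\hat\delta_1,\dots,\hat\delta_n)$, and let $\hat y_i$ be the endpoint of $\hat\delta_i$. By hypothesis $\delta_i$ lifts to the (unique) path $\epsilon_i$ in $Y_i$ from $y_{i-1}^i$ to $y_i^i$. Let $\hat Y_i$ be the pullback component containing $(y_{i-1}^i,\hat y_{i-1})=:\hat y_{i-1}^i$, lift $\epsilon_i$ to a path $\hat\epsilon_i$ in $\hat Y_i$ from $\hat y_{i-1}^i$, and set $\hat y_i^i$ to be its endpoint. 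Then $\hat\phi_i(\hat\epsilon_i)$ is a lift of $\mu(\hat\delta_i)=\delta_i$ starting at $\hat y_{i-1}$, so it equals $\hat\delta_i$, giving $\hat\phi_i(\hat y_i^i)=\hat y_i$; this exhibits $\hat\fR$ with $\hat\delta$ a path along it. For uniqueness, any elevation with $\hat\delta$ a path along it must have initial vertex $\hat y_0$ (that is where $\hat\delta_1$ starts), hence has the vertices $\hat y_i$; its $i$-th term contains a vertex over both $y_{i-1}^i$ and $\hat y_{i-1}$, so it is the pullback component above, and the lift of $\hat\delta_i$ it contains projects to the unique lift $\epsilon_i$ in $Y_i$, forcing $\hat y_i^i$. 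Part \ref{item:extendelev} is the same construction carried out one index at a time: to pass from $\hat y_{k-1}$ to $\hat y_k$ for $k>j$, take $\hat Y_k$ to be the pullback component containing $(y_{k-1}^k,\hat y_{k-1})$, choose any path $\epsilon$ in the connected complex $Y_k$ from $y_{k-1}^k$ to $y_k^k$, lift $\phi_k(\epsilon)$ to $\hat X$ from $\hat y_{k-1}$ to define $\hat y_k$, and lift $\epsilon$ to $\hat Y_k$ to define $\hat y_k^k$; extending below $\hat y_{i-1}$ is symmetric, basing $\hat Y_k$ at $(y_k^k,\hat y_k)$ and running the chosen path backwards.

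For \ref{item:elevtopath}: if $\hat\delta$ is a path along $\hat\fR$ with $\hat\delta_i$ lifting to $\hat\epsilon_i$ in $\hat Y_i$ from $\hat y_{i-1}^i$ to $\hat y_i^i$, put $\delta_i=\mu(\hat\delta_i)$; then $\nu_i(\hat\epsilon_i)$ runs in $Y_i$ from $y_{i-1}^i$ to $y_i^i$ with image $\phi_i(\nu_i(\hat\epsilon_i))=\mu(\hat\phi_i(\hat\epsilon_i))=\delta_i$, so $\delta$ is a path along $\fR$ and $\hat\delta$ its lift based at $\hat y_0$. For \ref{item:esselev}: if $\fR$ is closed and essential and $\hat\fR$ is a closed elevation, any path $\hat\delta$ along $\hat\fR$ is a loop at $\hat y_0=\hat y_n$ and, by \ref{item:elevtopath}, the lift of a (closed) path $\delta$ along $\fR$; if $\hat\delta$ were null-homotopic in $\hat X$ then its image $\delta$ would be null-homotopic in $X$, contradicting essentiality, so $\hat\fR$ is essential. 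Finally \ref{item:essential}: applying \ref{item:esselev} with $\hat X=\wt X$ and noting that a closed route in the simply connected $\wt X$ is never essential (it admits a path along it, which is a null-homotopic loop), an essential $\fR$ has no closed elevation to $\wt X$; conversely, if $\fR$ is not essential, choose a path $\delta$ along $\fR$ with null-homotopic realization, fix a lift $\ti{y}_0$ of $y_0$, and let $\wt\fR$ be the elevation given by \ref{item:pathtoelev} --- then the lift of the realization of $\delta$ is a loop, so $\ti{y}_n=\ti{y}_0$ and $\wt\fR$ is a closed elevation.

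The part I expect to demand the most care is the bookkeeping behind \ref{item:pathtoelev} and \ref{item:extendelev}: one is juggling the covering $\hat Y_i\to Y_i$, which enjoys path lifting, against the local isometry $Y_i\to X$, which enjoys only uniqueness (not existence) of path lifts, all the while keeping the prescribed basepoints $\hat y_{i-1}^i,\hat y_i^i$ and $\hat y_{i-1},\hat y_i$ mutually compatible across the commutative square. Organising everything through the pullback description of based elevations is what should keep this under control; once it is set up, parts \ref{item:elevtopath}--\ref{item:essential} are short formal arguments.
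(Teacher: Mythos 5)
Your proof is correct and follows essentially the same route as the paper's: lift the realization, take based elevations/pullback components at each stage, and read off the remaining parts as short formal consequences. The only cosmetic difference is in part \ref{item:essential}, where you route the forward implication through part \ref{item:esselev} plus the observation that a closed route in a simply connected complex is never essential, while the paper phrases it directly via parts \ref{item:pathtoelev} and \ref{item:elevtopath}; these are interchangeable and your version is equally clean.
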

\begin{proof}
	\begin{enumerate}
		\item Let $\delta=(\delta_1,\delta_2,\dots,\delta_n)$ be a path along $\fR$.
		Let $\hat{\delta}=(\hat{\delta}_1,\hat{\delta}_2,\dots,\hat{\delta}_n)$ be the lift of $\delta$ to $\hat{X}$ based at $\hat{y}_0$.
		Denote the initial and terminal vertices of $\hat{\delta}_i$ by $\hat{y}_{i-1}$ and $\hat{y}_i$ respectively.
		Let $(\hat{Y}_i,\hat{y}_{i-1}^i)\to(\hat{X},\hat{y}_{i-1})$ be the based elevation of $(Y_i,y^i_{i-1})\to(X,y_{i-1})$ (see Figure \ref{fig:routeelev}).
		As $\delta_i$ lifts to a path in $Y_i$ from $y^i_{i-1}$ to $y^i_i$, it follows from the definition of elevation that $\hat{\delta}_i$ lifts to a path in $\hat{Y}_i$ from $\hat{y}_{i-1}^i$ to some vertex $\hat{y}_i^i$.
		This now provides all the data for an elevation $\hat{\fR}$ of $\fR$ to $\hat{X}$ such that $\hat{\delta}$ is a path along $\hat{\fR}$.
		Moreover, it is clear that this $\hat{\fR}$ is the only elevation of $\fR$ that has $\hat{\delta}$ as a path along it.
		
		\item Similar idea to part \ref{item:pathtoelev}. If $j<n$, let $(\hat{Y}_{j+1},\hat{y}^{j+1}_j)\to(\hat{X},\hat{y}_j)$ be the based elevation of $(Y_{j+1},y^{j+1}_j)\to(X,y_j)$. Choose some lift $\hat{y}^{j+1}_{j+1}\in \hat{Y}_{j+1}$ of $y^{j+1}_{j+1}\in Y_{j+1}$, and let $\hat{y}_{j+1}$ be the image of $\hat{y}^{j+1}_{j+1}$ in $\hat{X}$. Repeat this argument for $Y_{j+2},\dots,Y_n$ to obtain $\hat{Y}_{j+2},\hat{y}_{j+2},\dots,\hat{Y}_n,\hat{y}_n$. Run a similar argument if $i>1$.
		
		\item Let $\hat{\fR}=(\hat{y}_0,\hat{Y}_1,\hat{y}_1,\hat{Y}_2,\hat{y}_2,\dots,\hat{Y}_n,\hat{y}_n)$ be an elevation of $\fR$ to $\hat{X}$, and let $\hat{\delta}=(\hat{\delta}_1,\hat{\delta}_2,\dots,\hat{\delta}_n)$ be a path along $\hat{\fR}$.
		Say $\hat{\delta}$ descends to a segmented path $\delta=(\delta_1,\delta_2,\dots,\delta_n)$ in $X$.
		Since each $\hat{\delta}_i$ is a path from $\hat{y}_{i-1}$ to $\hat{y}_i$ in $\hat{X}$ that lifts to a path in $\hat{Y}_i$ from $\hat{y}_{i-1}^i$ to $\hat{y}_i^i$, it follows from diagram (\ref{routeelevation}) that each $\delta_i$ is a path from $y_{i-1}$ to $y_i$ in $X$ that lifts to a path in $Y_i$ from $y_{i-1}^i$ to $y_i^i$.
		Thus $\delta$ is a path along $\fR$.
		
		\item Let $\hat{\fR}$ be an elevation of $\fR$ to $\hat{X}$.
		Suppose for contradiction that $\hat{\fR}$ is not essential.
		Then there exists a null-homotopic path $\hat{\delta}$ along $\hat{\fR}$.
		By \ref{item:elevtopath}, $\hat{\delta}$ descends to a path along $\fR$, which will also be null-homotopic, contradicting essentialness of $\fR$.
		
		\item $\fR$ is essential if and only if every path along $\fR$ is essential, which is equivalent to no path along $\fR$ having a closed lift to the universal cover $\wt{X}\to X$.
		By \ref{item:pathtoelev} and \ref{item:elevtopath}, this is also equivalent to $\fR$ having no closed elevations to $\wt{X}$.\qedhere
	\end{enumerate}	
\end{proof}

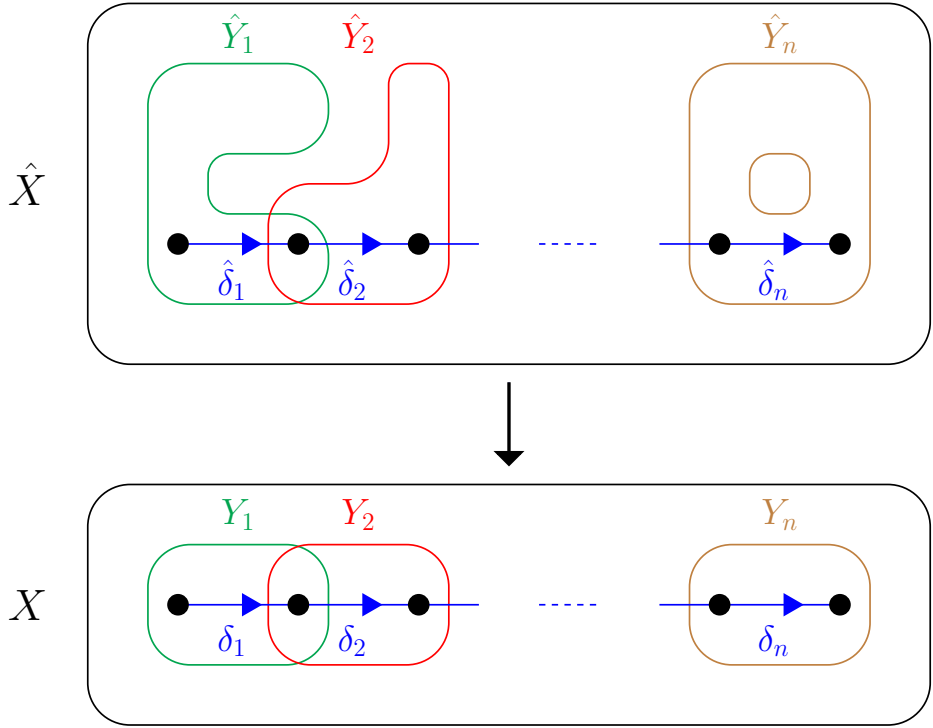
\begin{figure}[H]
	\centering
	\scalebox{0.8}{
		\begin{tikzpicture}[auto,node distance=2cm,
			thick,every node/.style={circle,draw,font=\small},
			every loop/.style={min distance=2cm},
			hull/.style={draw=none},
			]
			\tikzstyle{label}=[draw=none,font=\LARGE]
			
			\draw[rounded corners=20pt] (0,6) rectangle (14,12) {};
			\draw[rounded corners=20pt,Green] (3,8.5)--(4,8.5)--(4,7)--(1,7) --(1,11)
			--(4,11)--(4,9.5)[rounded corners=10pt]--(2,9.5)
			--(2,8.5)--(3,8.5) {};
			\draw[rounded corners=20pt,red](4,7)--(3,7)--(3,9)--(5,9)
			[rounded corners=10pt]--(5,11)--(6,11)[rounded corners=20pt]--(6,7)--(4,7){};
			\draw[rounded corners=20pt,brown](10,7) rectangle (13,11);
			\draw[rounded corners=10pt,brown](11,8.5) rectangle (12,9.5);
			
			\path (1.5,8) edge [blue,postaction={decoration={markings,mark=at position 0.7 with {\arrow[blue,line width=1mm, scale=0.7]{triangle 60}}},decorate}] (3.5,8);
			\path (3.5,8) edge [blue,postaction={decoration={markings,mark=at position 0.7 with {\arrow[blue,line width=1mm, scale=0.7]{triangle 60}}},decorate}] (5.5,8);
			\path (5.5,8) edge [blue] (6.5,8);
			\path (7.5,8) edge [blue,dashed] (8.5,8);
			\path (9.5,8) edge [blue] (10.5,8);			
			\path (10.5,8) edge [blue,postaction={decoration={markings,mark=at position 0.7 with {\arrow[blue,line width=1mm, scale=0.7]{triangle 60}}},decorate}] (12.5,8);
			\node[fill] at (1.5,8){};
			\node[fill] at (3.5,8){};
			\node[fill] at (5.5,8){};
			\node[fill] at (10.5,8){};
			\node[fill] at (12.5,8){};
			
			\node[label,font=\huge] (Xh) at (-1,9) {$\hat{X}$};
			\node[label,Green] (Y1h) at (2.5,11.5) {$\hat{Y}_1$};
			\node[label,blue] (d1h) at (2.4,7.4) {$\hat{\delta}_1$};
			\node[label,red] (Y2h) at (4.5,11.5) {$\hat{Y}_2$};
			\node[label,blue] (d2h) at (4.4,7.4) {$\hat{\delta}_2$};
			\node[label,brown] (Ynh) at (11.5,11.5) {$\hat{Y}_n$};
			\node[label,blue] (dnh) at (11.4,7.4) {$\hat{\delta}_n$};
			
			\draw[draw=black,fill=blue,-triangle 90, ultra thick] (7,5.7) -- (7,4.3);
			
			\draw[rounded corners=20pt] (0,0) rectangle (14,4) {};
			\draw[rounded corners=20pt,Green] (1,1) rectangle (4,3) {};
			\draw[rounded corners=20pt,red] (3,1) rectangle (6,3) {};
			\draw[rounded corners=20pt,brown] (10,1) rectangle (13,3) {};
			\path (1.5,2) edge [blue,postaction={decoration={markings,mark=at position 0.7 with {\arrow[blue,line width=1mm, scale=0.7]{triangle 60}}},decorate}] (3.5,2);
			\path (3.5,2) edge [blue,postaction={decoration={markings,mark=at position 0.7 with {\arrow[blue,line width=1mm, scale=0.7]{triangle 60}}},decorate}] (5.5,2);
			\path (5.5,2) edge [blue] (6.5,2);
			\path (7.5,2) edge [blue,dashed] (8.5,2);
			\path (9.5,2) edge [blue] (10.5,2);			
			\path (10.5,2) edge [blue,postaction={decoration={markings,mark=at position 0.7 with {\arrow[blue,line width=1mm, scale=0.7]{triangle 60}}},decorate}] (12.5,2);
			\node[fill] at (1.5,2){};
			\node[fill] at (3.5,2){};
			\node[fill] at (5.5,2){};
			\node[fill] at (10.5,2){};
			\node[fill] at (12.5,2){};
			
			\node[label,font=\huge] (X) at (-1,2) {$X$};
			\node[label,Green] (Y1) at (2.5,3.5) {$Y_1$};
			\node[label,blue] (d1) at (2.4,1.4) {$\delta_1$};
			\node[label,red] (Y2) at (4.5,3.5) {$Y_2$};
			\node[label,blue] (d2) at (4.4,1.4) {$\delta_2$};
			\node[label,brown] (Yn) at (11.5,3.5) {$Y_n$};
			\node[label,blue] (dn) at (11.4,1.4) {$\delta_n$};

		\end{tikzpicture}
	}
	\caption{Picture for Lemma \ref{lem:routeelev}\ref{item:pathtoelev} (in the case where the routes are embedded).}\label{fig:routeelev}
\end{figure}

\begin{remk}
	With the assumptions of Lemma \ref{lem:routeelev}, it follows from Lemma \ref{lem:routeelev}\ref{item:pathtoelev} that there exists an elevation $\hat{\fR}$ of $\fR$ to $\hat{X}$ with initial vertex $\hat{y}_0$. However, such an elevation might not be unique because different segmented paths $\delta$ might give rise to different elevations $\hat{\fR}$.
\end{remk}

The following lemma lists some more basic properties about elevations of routes. The proofs are exercises in elementary covering space theory.

\begin{lem}\label{lem:elevelev}
	Let $X''\to X'\to X$ be covering maps between non-positively curved cube complexes.
	Let $\fR$ be a route in $X$.
	\begin{enumerate}
		\item If $\fR$ is embedded, then any elevation of $\fR$ to $X'$ is embedded.
		\item\label{item:closedtoclosed} If $\fR'$ is a closed elevation of $\fR$ to $X'$, then $\fR$ is closed.
		\item If $\fR'$ is an elevation of $\fR$ to $X'$, and $\fR''$ is an elevation of $\fR'$ to $X''$, then $\fR''$ is an elevation of $\fR$ to $X''$.
		\item\label{item:R''toR'} If $\fR''$ is an elevation of $\fR$ to $X''$, then there exists a unique elevation $\fR'$ of $\fR$ to $X'$ such that $\fR''$ is an elevation of $\fR'$.
		\item\label{item:noclosedelev} If $\fR$ has no closed elevations to $X'$, then $\fR$ also has no closed elevations to $X''$.
	\end{enumerate}
\end{lem}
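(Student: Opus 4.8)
The statement bundles five covering-space facts about elevations of routes; I would prove them one at a time, reusing Lemma \ref{lem:routeelev} and the characterisation of elevations as based-elevation data. For (1), an elevation $\fR'$ of an embedded route $\fR$ consists of elevations $\hat Y_i'\to X'$ of the subcomplexes $Y_i\subset X$; since $Y_i$ is an embedded subcomplex, $\hat Y_i'$ is a component of its preimage in $X'$, hence embedded. For (2), if $\fR'$ is a closed elevation then its initial vertex $\hat y_0'$ equals its terminal vertex $\hat y_n'$; projecting to $X$ via the covering map sends these to $y_0$ and $y_n$ respectively, so $y_0=y_n$ and $\fR$ is closed. For (3), transitivity of elevations: given $\fR'$ an elevation of $\fR$ to $X'$ and $\fR''$ an elevation of $\fR'$ to $X''$, one checks from diagram (\ref{routeelevation}) that composing the two commutative squares for each index $i$ gives the commutative square exhibiting $\hat Y_i''$ as an elevation of $Y_i$ to $X''$ (using that $\hat Y_i''\to X''\to X$ is the covering needed, and that the closing-up condition on paths is transitive since a path in $\hat Y_i''$ closes up iff its projections to $\hat Y_i'$ and $X''$ do, iff its projections to $Y_i$, $X'$, $X''$ all do).

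For (4), given $\fR''$ an elevation of $\fR$ to $X''$, I would build $\fR'$ from based-elevation data: pick the initial vertex $\hat y_0''$ of $\fR''$, let $\hat y_0'$ be its image in $X'$, and for each $i$ let $(\hat Y_i',\hat y_{i-1}'^{\,i})\to(X',\hat y_{i-1}')$ be the based elevation of $(Y_i,y_{i-1}^i)\to(X,y_{i-1})$, then push forward the terminal vertices $\hat y_i'^{\,i}$ (images of $\hat y_i''^{\,i}$ under $X''\to X'$) to get $\hat y_i'\in X'$. One verifies $\fR''$ is then an elevation of $\fR'$ using part (3) and the universal property of based elevations (a path in $\hat Y_i''$ closes up iff its image in $X''$ and its image in $Y_i$ do, and this factors through $X'$); uniqueness of $\fR'$ follows because $\hat Y_i'$ is forced to be the based elevation containing the image of $\hat y_{i-1}''$, and the vertices of $\fR'$ are forced as images of those of $\fR''$. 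For (5), this is the contrapositive of (4) combined with (3): if $\fR$ had a closed elevation $\fR''$ to $X''$, then by (4) there is an elevation $\fR'$ of $\fR$ to $X'$ with $\fR''$ an elevation of $\fR'$; but $\fR''$ closed forces $\fR'$ closed by part (2) (applied to the cover $X''\to X'$), contradicting the hypothesis that $\fR$ has no closed elevations to $X'$.

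The only mildly delicate point — the ``main obstacle'' — is part (4), specifically checking that the route $\fR'$ constructed from based-elevation data genuinely satisfies the path-closing-up condition built into Definition \ref{defn:routeelevations} (via the definition of elevation of a local isometry), rather than merely fitting into the commutative square. This is handled by observing that the closing-up condition is ``multiplicative'' across a tower of covers: a path in $\hat Y_i''$ closes up iff its projections to $Y_i$ and $X''$ close up; since $X''\to X$ factors through $X'$, the projection to $X''$ closing up is equivalent to its projection to $X'$ closing up together with the appropriate condition, and one threads these equivalences through the diagram. All of this is routine covering-space bookkeeping, so I would state it as ``exercises in elementary covering space theory'' after indicating the key reductions above, matching the paper's own framing.
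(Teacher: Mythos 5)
Your proof is correct and is exactly the kind of routine covering-space verification the paper envisions: the paper gives no detailed argument for this lemma, writing only that the proofs are exercises in elementary covering space theory, and your sketch supplies the missing bookkeeping along the natural lines. One tiny slip: in your part~(5) you announce the argument as ``the contrapositive of (4) combined with (3)'' but then correctly invoke part~(2) (applied to the cover $X''\to X'$) rather than (3); also, in part~(4) the vertex you call $\hat y_i'^{\,i}$ is really the image of $\hat y_i''^{\,i}$ under the induced covering $\hat Y_i''\to\hat Y_i'$, with $\hat y_i'$ then its image in $X'$ --- neither affects the substance.
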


When constructing a finite cover with no closed elevations of a given route (as we will do later in Theorems \ref{thm:embroutes} and \ref{thm:routesext}), it is often helpful to do the construction in stages, first working with a certain intermediate cover.
The following lemma will be very useful for this.

\begin{lem}\label{lem:factorarg}
	Let $X'\to X$ be a finite-sheeted covering map of non-positively curved cube complexes, and let $\fR$ be a finite closed route in $X$. Suppose that, for each closed elevation $\fR'$ of $\fR$ to $X'$, there is a finite-sheeted cover $\hat{X}_{\fR'}\to X'$ with no closed elevations of $\fR'$.
	Then there is a finite-sheeted cover $X''\to X$ with no closed elevations of $\fR$.
\end{lem}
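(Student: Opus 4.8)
The plan is to combine the finitely many covers $\hat{X}_{\fR'}$ into a single finite-sheeted cover of $X'$ and then use the fact that a finite-sheeted cover of a finite-sheeted cover is still finite-sheeted. First I would observe that since $X'\to X$ is finite-sheeted and $\fR$ is a finite route, there are only finitely many closed elevations $\fR'_1,\dots,\fR'_k$ of $\fR$ to $X'$: indeed the first subcomplex $Y_1$ is finite, so it has finitely many elevations to $X'$ (these are components of the preimage, which has finitely many vertices), and having fixed the elevation $\hat{Y}_1$ together with the initial vertex $\hat{y}_0$ over $y_0$, an elevation of $\fR$ is determined by a finite amount of extra combinatorial data (using Lemma \ref{lem:routeelev}\ref{item:pathtoelev}, each elevation arises from a path along $\fR$, and only finitely many isomorphism types of data can occur). [If one prefers to avoid this finiteness discussion, one can instead pass to the universal cover and use Lemma \ref{lem:routeelev}\ref{item:essential} together with the orbit picture of Remark \ref{remk:unirouteelev}; but the direct counting argument is cleaner here.]

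Next I would form the fibre product (common refinement) $\hat{X}' = \hat{X}_{\fR'_1}\times_{X'}\cdots\times_{X'}\hat{X}_{\fR'_k}$, or more precisely a connected component of it that covers $X'$; this is a finite-sheeted cover $\hat{X}'\to X'$ which factors through each $\hat{X}_{\fR'_j}\to X'$. Then I claim $\hat{X}'\to X'$ has no closed elevations of $\fR$ at all. Suppose $\hat{\fR}$ were a closed elevation of $\fR$ to $\hat{X}'$. By Lemma \ref{lem:elevelev}\ref{item:R''toR'} applied to $\hat{X}'\to X'\to X$, $\hat{\fR}$ is an elevation of a (unique) elevation $\fR'_j$ of $\fR$ to $X'$; and by Lemma \ref{lem:elevelev}\ref{item:closedtoclosed}, $\fR'_j$ is closed since $\hat{\fR}$ is closed. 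So $\fR'_j$ is one of our finitely many closed elevations, and $\hat{\fR}$ is a closed elevation of $\fR'_j$ to $\hat{X}'$. But $\hat{X}'\to X'$ factors through $\hat{X}_{\fR'_j}\to X'$, so by Lemma \ref{lem:elevelev}\ref{item:noclosedelev} (applied to $\hat{X}'\to\hat{X}_{\fR'_j}\to X'$ with the route $\fR'_j$), the fact that $\hat{X}_{\fR'_j}$ has no closed elevations of $\fR'_j$ implies $\hat{X}'$ has none either — a contradiction.

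Finally, $\hat{X}'\to X$ is a composition of finite-sheeted covers, hence finite-sheeted, so setting $X'' = \hat{X}'$ completes the proof. The only genuinely delicate point is the first step — establishing that there are only finitely many closed elevations of $\fR$ to $X'$, so that the fibre product is taken over a finite index set — but this is routine covering-space bookkeeping given the finiteness of $X'\to X$ and of the route. Everything else is a formal manipulation of the transitivity/pullback properties of elevations packaged in Lemma \ref{lem:elevelev}.
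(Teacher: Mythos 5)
Your proposal is correct and takes essentially the same approach as the paper: form a finite-sheeted common refinement of the covers $\hat{X}_{\fR'}$ over $X'$, then derive a contradiction from a hypothetical closed elevation of $\fR$ by descending it through the intermediate cover via Lemma \ref{lem:elevelev}. The only cosmetic difference is that in the final step you invoke Lemma \ref{lem:elevelev}\ref{item:noclosedelev} directly, whereas the paper applies Lemma \ref{lem:elevelev}\ref{item:R''toR'} a second time together with \ref{item:closedtoclosed} --- these are interchangeable, since \ref{item:noclosedelev} is just those two items packaged together.
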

\begin{proof}
	There are finitely many closed elevations $\fR'$ of $\fR$ to $X'$, so finitely many covers $\hat{X}_{\fR'}\to X'$. Hence, there is a finite-sheeted cover $X''\to X$ that factors through each of the $\hat{X}_{\fR'}$.
	Suppose for contradiction that there exists a closed elevation $\fR''$ of $\fR$ to $X''$.
	By Lemma \ref{lem:elevelev}\ref{item:R''toR'}, there exists a (unique) elevation $\fR'$ of $\fR$ to $X'$ such that $\fR''$ is an elevation of $\fR'$.
	Applying Lemma \ref{lem:elevelev}\ref{item:R''toR'} again, there is an elevation $\hat{\fR}$ of $\fR'$ to $\hat{X}_{\fR'}$ such that $\fR''$ is an elevation of $\hat{\fR}$. By Lemma \ref{lem:elevelev}\ref{item:closedtoclosed}, $\hat{\fR}$ is closed, contradicting the hypothesis on $\hat{X}_{\fR'}$.
\end{proof}

Here is another useful lemma, which allows us to understand the set of all possible elevations of a given route to the universal cover.

\begin{lem}\label{lem:elevuniv}
	Let $X$ be a non-positively curved cube complex with fundamental group $G$ and universal cover $\wt{X}$.
	Let $\wt{\fR}=(\ti{y}_0,\wt{Y}_1,\ti{y}_1,\wt{Y}_2,\ti{y}_2,\dots,\wt{Y}_n,\ti{y}_n)$ be a route in $\wt{X}$, and let $K_i<G_{\wt{Y}_i}$ for $1\leq i\leq n$.
	Consider the route
	\begin{equation*}
		\fR=(G\cdot\ti{y}_0,\wt{Y}_1/K_1,G\cdot\ti{y}_1,\wt{Y}_2/K_2,\dots,\wt{Y}_n/K_n,G\cdot\ti{y}_n)
	\end{equation*}
	in $X$, where the maps $\wt{Y}_i/K_i\to X=\wt{X}/G$ are the natural quotient maps, and each complex $\wt{Y}_i/K_i$ is equipped with vertices $K_i\cdot\ti{y}_{i-1},K_i\cdot\ti{y}_i$ (which clearly map to $G\cdot\ti{y}_{i-1},G\cdot\ti{y}_i$ in $X$).
	Then every elevation of $\fR$ to $\wt{X}$ takes the form
	\begin{equation}\label{wtfR'}
		\wt{\fR}'=(g_0\ti{y}_0,g_0\wt{Y}_1,g_1\ti{y}_1,g_1\wt{Y}_2,g_2\ti{y}_2,\dots,g_{n-1}\wt{Y}_n,g_n\ti{y}_n),
	\end{equation}
	where $g_0\in G$, and $g_i=g_0k_1k_2\cdots k_i$ ($1\leq i\leq n$), with $k_j\in K_j$.	
	Conversely, every route of the form (\ref{wtfR'}) is an elevation of $\fR$.
	Moreover, each covering map $g_{i-1}\wt{Y}_i\to\wt{Y}_i/K_i$ is the composition of left multiplication by $g_{i-1}^{-1}$ with the natural quotient map $\wt{Y}_i\to\wt{Y}_i/K_i$.
\end{lem}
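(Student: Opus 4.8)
The plan is to unwind the definition of an elevation of a route (Definition \ref{defn:routeelevations}) in this specific situation, using the transitivity of the $G$-action on based elevations (Remark \ref{remk:unibasedelev}) one segment at a time. First I would observe that the complex $\wt{Y}_i$, equipped with the basepoint $\ti{y}_{i-1}$, maps to $\wt{Y}_i/K_i$ with basepoint $K_i\cdot\ti{y}_{i-1}$, and that $\wt{X}\to\wt{Y}_i/K_i$ factors as $\wt{X}\to X=\wt{X}/G\to$ nothing — rather, one should think of $\wt{Y}_i\to\wt{Y}_i/K_i\to X$ together with $\wt{X}\to X$, so that the elevations of $Y_i:=\wt{Y}_i/K_i$ to $\wt{X}$ are exactly the components of the pullback. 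Since $\wt{Y}_i$ is simply connected and is itself a cover of $\wt{Y}_i/K_i$, the map $\wt{Y}_i\to\wt{Y}_i/K_i$ is the universal cover; hence the elevations of $\wt{Y}_i/K_i$ to $\wt{X}$ are precisely the $G$-translates $g\wt{Y}_i$, each coming with the covering map $g\wt{Y}_i\xrightarrow{g^{-1}}\wt{Y}_i\to\wt{Y}_i/K_i$. This is the content of the last sentence of the lemma and also pins down what each $\hat{Y}_i$ in an elevation of $\fR$ must be, namely $g\wt{Y}_i$ for some $g\in G$.

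Next I would run the based-elevation bookkeeping segment by segment. Given an elevation $\wt{\fR}'=(\hat{y}_0,\hat{Y}_1,\hat{y}_1,\dots,\hat{Y}_n,\hat{y}_n)$ of $\fR$, pick $g_0\in G$ with $\hat{y}_0=g_0\ti{y}_0$. By diagram (\ref{routeelevation}) for $i=1$, the complex $\hat{Y}_1$ is an elevation of $\wt{Y}_1/K_1$ equipped with a vertex mapping to $\hat{y}_0=g_0\ti{y}_0$ and lying over $K_1\cdot\ti{y}_0\in\wt{Y}_1/K_1$; since $g_0\wt{Y}_1$ is an elevation through $g_0\ti{y}_0$ and such based elevations are unique, $\hat{Y}_1=g_0\wt{Y}_1$. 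Now $\hat{y}_1\in\hat{Y}_1=g_0\wt{Y}_1$ lies over $G\cdot\ti{y}_1$ and maps to $K_1\cdot\ti{y}_1\in\wt{Y}_1/K_1$, so under the covering $g_0\wt{Y}_1\xrightarrow{g_0^{-1}}\wt{Y}_1\to\wt{Y}_1/K_1$ the vertex $g_0^{-1}\hat{y}_1$ lies in the $K_1$-orbit of $\ti{y}_1$; hence $\hat{y}_1=g_0k_1\ti{y}_1$ for some $k_1\in K_1$, i.e. $\hat{y}_1=g_1\ti{y}_1$ with $g_1=g_0k_1$. Note $g_1\wt{Y}_1=g_0k_1\wt{Y}_1=g_0\wt{Y}_1=\hat{Y}_1$ since $k_1\in K_1<G_{\wt{Y}_1}$, so the expression is consistent. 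Iterating this argument through $i=2,\dots,n$ — at each stage $\hat{Y}_i=g_{i-1}\wt{Y}_i$ by uniqueness of the based elevation through $g_{i-1}\ti{y}_{i-1}$, and $\hat{y}_i=g_{i-1}k_i\ti{y}_i=:g_i\ti{y}_i$ with $k_i\in K_i$ — yields exactly the form (\ref{wtfR'}).

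For the converse, I would simply check that any tuple of the form (\ref{wtfR'}) satisfies Definition \ref{defn:routeelevations}: each $g_{i-1}\wt{Y}_i$ is an elevation of $\wt{Y}_i/K_i$ to $\wt{X}$ with covering map $g_{i-1}\wt{Y}_i\xrightarrow{g_{i-1}^{-1}}\wt{Y}_i\to\wt{Y}_i/K_i$, the distinguished vertices $g_{i-1}\ti{y}_{i-1},g_i\ti{y}_i$ map to $g_{i-1}\ti{y}_{i-1},g_i\ti{y}_i$ in $\wt{X}$ and to $G\cdot\ti{y}_{i-1},G\cdot\ti{y}_i$ in $X$, and the compatibility $g_i\ti{y}_i=g_{i-1}k_i\ti{y}_i$ with $k_i\in K_i$ guarantees that $g_i\ti{y}_i\in g_{i-1}\wt{Y}_i$ lies over $K_i\cdot\ti{y}_i\in\wt{Y}_i/K_i$, so diagram (\ref{routeelevation}) commutes; the loop-lifting condition is automatic since $\wt{X}$ is simply connected. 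I do not expect a serious obstacle here: the only point requiring care is keeping the basepoint bookkeeping straight — in particular verifying at each step that the \emph{based} elevation of $\wt{Y}_i/K_i$ through the prescribed vertex is genuinely $g_{i-1}\wt{Y}_i$ and not some other $G$-translate, which is where uniqueness of based elevations (Definition \ref{defn:basedelevations}) and the identification of $\wt{Y}_i\to\wt{Y}_i/K_i$ as the universal cover do the real work.
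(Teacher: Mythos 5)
Your proposal is correct and follows essentially the same line of argument as the paper: identify each elevation of $\wt{Y}_i/K_i$ to $\wt{X}$ as a $G$-translate $g\wt{Y}_i$ (via uniqueness/transitivity of based elevations), then peel off the $g_i$ segment by segment and read off the constraints $g_{i-1}^{-1}g_i\in K_i$ from the requirement that the distinguished vertices project correctly to $\wt{Y}_i/K_i$. The only cosmetic difference is that you define $g_i:=g_{i-1}k_i$ directly from the covering-map bookkeeping, whereas the paper first picks an arbitrary $g_i\in G$ with $\ti{y}'_i=g_i\ti{y}_i$ and then invokes the freeness of the $K_i$-action to deduce $g_{i-1}^{-1}g_i\in K_i$; both routes are equivalent since $G$ acts freely on $\wt{X}$.
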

\begin{remk}
	Note that the $\wt{Y}_i$ are all embedded in $\wt{X}$ (Remark \ref{remk:isembedding}), so we can consider them as convex subcomplexes of $\wt{X}$. Hence the stabiliser notation $G_{\wt{Y}_i}$ and the translate notation $g_{i-1}\wt{Y}_i$ makes sense.
\end{remk}
\begin{proof}[Proof of Lemma \ref{lem:elevuniv}]
	Let $\wt{\fR}'=(\ti{y}'_0,\wt{Y}'_1,\ti{y}'_1,\wt{Y}'_2,\ti{y}'_2,\dots,\wt{Y}'_n,\ti{y}'_n)$ be an elevation of $\fR$ to $\wt{X}$.
	We claim that $\wt{\fR}'$ is of the form (\ref{wtfR'}).
	For $1\leq i\leq n$, it is clear that $(\wt{Y}_i,\ti{y}_{i-1})\to(\wt{X},\ti{y}_{i-1})$ is one based elevation of $(\wt{Y}_i/K_i,K_i\cdot\ti{y}_{i-1})\to(X,G\cdot\ti{y}_{i-1})$ to $\wt{X}$.
	Remark \ref{remk:unibasedelev} says that $G$ acts transitively on the set of based elevations, so we must have $(\wt{Y}'_i,\ti{y}'_{i-1})=(g_{i-1}\wt{Y}_i,g_{i-1}\ti{y}_{i-1})$ for some $g_{i-1}\in G$.
	Plus this tells us that the covering map $\wt{Y}'_i\to\wt{Y}_i/K_i$ is the composition of left multiplication by $g_{i-1}^{-1}$ with the natural quotient map $\wt{Y}_i\to\wt{Y}_i/K_i$.
	We also have $\ti{y}'_n=g_n\ti{y}_n$ for some $g_n\in G$, since $\ti{y}'_n$ and $\ti{y}_n$ must have the same image in $X$.
	Since $\wt{\fR}'$ is an elevation of $\fR$, we know that $\ti{y}'_i=g_i\ti{y}_i$ maps to $K_i\cdot \ti{y}_i$ under the covering map $\wt{Y}'_i\to\wt{Y}_i/K_i$ (again $1\leq i\leq n$), so $K_i\cdot g_{i-1}^{-1}g_i\ti{y}_i=K_i\cdot\ti{y}_i$. As $K_i$ acts freely on $\wt{X}$, we see that $g_{i-1}^{-1}g_i\in K_i$.
	Therefore, $\wt{\fR}'$ is of the form (\ref{wtfR'}), as claimed.
	
	Conversely, let $\wt{\fR}'$ take the form (\ref{wtfR'}). Again by Remark \ref{remk:unibasedelev}, each complex $g_{i-1}\wt{Y}_i$ is an elevation of $\wt{Y}_i/K_i\to X$, with covering map  $g_{i-1}\wt{Y}_i\to\wt{Y}_i/K_i$ defined as the composition of left multiplication by $g_{i-1}^{-1}$ with the natural quotient map $\wt{Y}_i\to\wt{Y}_i/K_i$.
	As $g_{i-1}^{-1}g_i=k_i\in K_i$, this map sends the vertices $g_{i-1}\ti{y}_{i-1},g_i\ti{y}_i\in g_{i-1}\wt{Y}_i$ to the vertices $K_i\cdot\ti{y}_{i-1},K_i\cdot\ti{y}_i\in\wt{Y}_i/K_i$.
	Thus $\wt{\fR}'$ is an elevation of $\fR$.
\end{proof}

We finish this subsection with a proposition about routes in virtually special cube complexes.

\begin{prop}\label{prop:routeembed}
	Let $\fR=(y_0,Y_1,y_1,Y_2,y_2,\dots,Y_n,y_n)$ be a finite route in a finite virtually special cube complex $X$.
	Then there is a finite cover $\hat{X}\to X$ such that every elevation $\hat{\fR}$ of $\fR$ to $\hat{X}$ is embedded, and such that the subcomplexes $\hat{Y}_i$ in $\hat{\fR}$ do not inter-osculate with hyperplanes of $\hat{X}$.
\end{prop}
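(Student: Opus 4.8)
The plan is to deduce this directly from Proposition \ref{prop:elevembed}, which already produces a single finite cover in which an arbitrary finite family of local isometries simultaneously has all elevations embedded and non-inter-osculating with hyperplanes. So the only things to set up are that the constituent maps $\phi_i\colon Y_i\to X$ of $\fR$ satisfy the hypotheses of Proposition \ref{prop:elevembed}, and that controlling elevations of the $Y_i$ amounts to controlling the subcomplexes appearing in elevations of $\fR$. Finiteness of each $Y_i$ is part of the hypothesis that $\fR$ is a finite route. For virtual specialness of $Y_i$, take a finite special cover $X'\to X$ and let $Y_i'$ be a component of the pullback of $\phi_i$ along $X'\to X$; then $Y_i'\to Y_i$ is a finite cover and the composite $Y_i'\to X'\to X_\Gamma$ with a local isometry to a Salvetti complex is again a local isometry, so $Y_i'$ is special and hence $Y_i$ is virtually special (alternatively, one invokes the standard fact that a local isometry into a virtually special cube complex has virtually special domain).

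Next I would apply Proposition \ref{prop:elevembed} to $\phi_1,\dots,\phi_n$ to obtain a finite (directly special, regular) cover $\hat X\to X$ in which every elevation of every $Y_i$ is embedded and does not inter-osculate with any hyperplane of $\hat X$. To finish, let $\hat\fR=(\hat y_0,\hat Y_1,\hat y_1,\dots,\hat Y_n,\hat y_n)$ be any elevation of $\fR$ to $\hat X$. Unwinding Definition \ref{defn:routeelevations}, each $\hat Y_i$ is in particular an elevation of $\phi_i\colon Y_i\to X$ to $\hat X$ (the extra basepoint data attached to a route elevation does not affect this), so by the choice of $\hat X$ it is embedded in $\hat X$ and does not inter-osculate with hyperplanes of $\hat X$. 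Since a route is embedded exactly when each of its constituent maps is an embedding (Definition \ref{defn:routes}), $\hat\fR$ is embedded, which completes the proof.

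I do not expect a genuine obstacle here: all of the real work --- simultaneously embedding a finite family of local isometries and killing inter-osculations with hyperplanes, by passing to a directly special regular cover --- is already packaged in Proposition \ref{prop:elevembed}, i.e.\ \cite[Corollary 4.8]{Shepherd23}. The present proposition is just the observation that the subcomplexes occurring in an elevation of a route are precisely elevations of the route's constituent local isometries, together with the routine check that those local isometries have finite virtually special domains.
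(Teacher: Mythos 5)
Your proof is correct and takes the same route as the paper: the paper's entire proof is the one-line statement ``Apply Proposition~\ref{prop:elevembed} to the local isometries $Y_1,\dots,Y_n\to X$.'' Your write-up simply fills in the routine checks (finiteness and virtual specialness of the $Y_i$, and the observation that the constituent subcomplexes of a route elevation are precisely elevations of the $Y_i$) that the paper leaves implicit.
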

\begin{proof}
	Apply Proposition \ref{prop:elevembed} to the local isometries $Y_1,\dots,Y_n\to X$.
\end{proof}

\subsection{Connection between routes and product separability}

The separability of a subgroup can be characterized geometrically using finite-sheeted covers and elevations of complexes \cite{Scott78} (see also \cite[Lemma 4.8]{WiseRiches}).
In the following proposition, we use routes to provide a similar geometric criterion for the separability of products of convex-cocompact subgroups in the fundamental group of a  non-positively curved cube complex.

\begin{prop}\label{prop:routesep}
	Let $X$ be a non-positively curved cube complex which is either locally finite or virtually special, and let $n\geq1$ be an integer. The following are equivalent:
	\begin{enumerate}
		\item\label{item:routes} For any finite essential closed route $\fR$ in $X$ of length $n$, there exists a finite-sheeted cover $\hat{X}\to X$ with no closed elevations of $\fR$.
		\item\label{item:prodsep} Any product of $n$ convex-cocompact subgroups in $G=\pi_1(X)$ is separable.
	\end{enumerate}
\end{prop}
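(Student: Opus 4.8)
I would prove the two implications separately; in both directions the engine is Lemma~\ref{lem:elevuniv}, which enumerates the elevations of a suitably chosen route to $\wt X$ in terms of a product of convex-cocompact subgroups, combined with Lemma~\ref{lem:routeelev}\ref{item:essential} (essential $\Leftrightarrow$ no closed elevation to $\wt X$) and Lemma~\ref{lem:sepequiv} to pass between separability and finite-index subgroups. Recall that $G=\pi_1(X)$ acts freely on $\wt X$.

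\textbf{\ref{item:routes}$\Rightarrow$\ref{item:prodsep}.} Let $K_1,\dots,K_n<G$ be convex-cocompact, with $K_i$ stabilising and acting cocompactly on a convex subcomplex $\wt Y_i\subset\wt X$, and let $g\in G\setminus K_1\cdots K_n$. Fix a vertex $\ti x\in\wt X$ and enlarge each $\wt Y_i$ to the $K_i$-invariant convex subcomplex $\wt Z_i:=\Hull\bigl(\wt Y_i\cup K_i\cdot\ti x\cup K_i\cdot g^{-1}\ti x\bigr)$. The key point is that $\wt Z_i$ is still $K_i$-cocompact — in the virtually special case by two applications of Lemma~\ref{lem:vspeccocomp} (the restriction of the virtually special action $G\acts\wt X$ to $K_i$ is again virtually special), and in the locally finite case by an analogous easier argument — so $\wt Z_i/K_i$ is finite. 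Form the finite closed route
$$\fR=\bigl(G\cdot\ti x,\ \wt Z_1/K_1,\ G\cdot\ti x,\ \dots,\ G\cdot\ti x,\ \wt Z_n/K_n,\ G\cdot g^{-1}\ti x\bigr),$$
with marked vertices $K_i\cdot\ti x$ in every slot except $K_n\cdot g^{-1}\ti x$ at the end, so that in the notation of Lemma~\ref{lem:elevuniv} we take $\ti y_0=\dots=\ti y_{n-1}=\ti x$ and $\ti y_n=g^{-1}\ti x$. By Lemma~\ref{lem:elevuniv} a closed elevation of $\fR$ to $\wt X$ amounts to a tuple $k_i\in K_i$ with $k_1\cdots k_n g^{-1}=1$; since $g\notin K_1\cdots K_n$ there is none, so $\fR$ is essential by Lemma~\ref{lem:routeelev}\ref{item:essential}. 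Apply \ref{item:routes} to get a finite cover $\hat X\to X$ with no closed elevations of $\fR$; passing to a regular finite cover (which preserves this by Lemma~\ref{lem:elevelev}\ref{item:noclosedelev}) we may take $\hat G:=\pi_1(\hat X)$ finite-index and normal in $G$. If $g=hk_1\cdots k_n$ with $h\in\hat G$, $k_i\in K_i$, then the elevation of $\fR$ to $\wt X$ given by Lemma~\ref{lem:elevuniv} with $g_0=1$ and these $k_i$ has initial vertex $\ti x$ and terminal vertex $k_1\cdots k_n g^{-1}\ti x=h^{-1}\ti x$, which lie in the same $\hat G$-orbit, so it descends to a closed elevation of $\fR$ to $\hat X$ — a contradiction. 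Hence $g\notin\hat GK_1\cdots K_n$, and \ref{item:prodsep} follows from Lemma~\ref{lem:sepequiv}.

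\textbf{\ref{item:prodsep}$\Rightarrow$\ref{item:routes}.} Let $\fR=(y_0,Y_1,y_1,\dots,Y_n,y_n)$ be a finite essential closed route. Choose a lift $\ti x$ of $y_0$ and a path along $\fR$; by Lemma~\ref{lem:routeelev}\ref{item:pathtoelev} these give an elevation $\wt\fR=(\ti x,\wt Y_1,\ti y_1,\dots,\wt Y_n,\ti y_n)$ of $\fR$ to $\wt X$, where $\wt Y_i$ is the based elevation of $Y_i$, hence its universal cover. Since local isometries are $\pi_1$-injective, the image $K_i<G$ of $\pi_1(Y_i)$ is convex-cocompact, stabilises $\wt Y_i$, and acts on it as the deck group, so $Y_i=\wt Y_i/K_i$ (cf.\ Remark~\ref{remk:convexsubgp}); thus $\fR$ is exactly the route attached to $\wt\fR$ and the $K_i$ in Lemma~\ref{lem:elevuniv}. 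Let $g\in G$ be the unique element with $\ti y_n=g\ti x$. By Lemma~\ref{lem:elevuniv} the closed elevations of $\fR$ to $\wt X$ correspond to tuples with $k_1\cdots k_n g=1$, so essentiality of $\fR$ (via Lemma~\ref{lem:routeelev}\ref{item:essential}) says $g\notin K_nK_{n-1}\cdots K_1$. By \ref{item:prodsep} this product of convex-cocompact subgroups is separable, so by Lemma~\ref{lem:sepequiv} there is a finite-index normal $\hat G\triangleleft G$ with $g\notin\hat GK_n\cdots K_1$; let $\hat X=\wt X/\hat G$. Any closed elevation of $\fR$ to $\hat X$ is covered by an elevation of $\fR$ to $\wt X$ (Lemma~\ref{lem:routeelev}\ref{item:pathtoelev} over $\hat X$ together with Lemma~\ref{lem:elevelev}), which by Lemma~\ref{lem:elevuniv} has initial vertex $g_0\ti x$ and terminal vertex $g_0k_1\cdots k_n g\ti x$; closedness over $\hat X$ forces $g_0k_1\cdots k_n g\,g_0^{-1}\in\hat G$, whence $g\in\hat GK_n\cdots K_1$ by normality — a contradiction. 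So $\hat X\to X$ is the required cover.

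The step I expect to be the main obstacle is constructing the route in \ref{item:routes}$\Rightarrow$\ref{item:prodsep}: because consecutive subcomplexes of a route must overlap inside $\wt X$, one is forced to enlarge the convex subcomplexes $\wt Y_i$, and this enlargement has to be carried out without destroying cocompactness — which is exactly where the hypothesis that $X$ is locally finite or virtually special enters, through Lemma~\ref{lem:vspeccocomp}. The converse implication is comparatively routine; the one point needing care is that for a based elevation $\wt Y_i$ the $G$-stabiliser action of $\mathrm{im}(\pi_1Y_i\to G)$ coincides with the deck action, so that $\wt Y_i/K_i$ really equals $Y_i$ and Lemma~\ref{lem:elevuniv} genuinely describes the elevations of $\fR$ itself.
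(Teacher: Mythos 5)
Your proof is correct and routes through a genuinely different lemma than the paper does. Both directions of the paper's proof are powered by Lemma~\ref{lem:realizations}, which translates between products of the subgroups $K_i$ and homotopy classes of \emph{realizations of paths along} $\fR$; essentialness then becomes the statement $1\notin K_1\cdots K_n[\gamma_1\cdots\gamma_n]$, and a finite cover killing closed lifts of paths along $\fR$ gives the finite-index subgroup needed for Lemma~\ref{lem:sepequiv} directly. You instead work entirely at the level of elevations to $\wt X$ via Lemma~\ref{lem:elevuniv}, reading off closed elevations as solutions to $k_1\cdots k_n=g$ and descending them to covers by matching endpoints of the $\hat G$-orbit. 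The two arguments encode the same bijection between closed (partial) elevations and tuples $(k_1,\dots,k_n)$; the paper states it once and for all in terms of loops, whereas you derive what you need on the fly from the universal-cover enumeration. Your (1)$\Rightarrow$(2) construction of the auxiliary route is essentially identical to the paper's (same enlargement of the $\wt Y_i$ to contain $\ti x$ and $g^{-1}\ti x$, invoking Lemma~\ref{lem:vspeccocomp} exactly where the locally-finite/virtually-special hypothesis is needed), and the normal cover you pass to is harmless but unnecessary. For (2)$\Rightarrow$(1), the one place your argument is terser than it should be is the identification $K_i=G_{\wt Y_i}$ and $Y_i=\wt Y_i/K_i$: as you note yourself, this depends on taking $K_i$ to be the image of $\pi_1(Y_i,y_{i-1}^i)$ under the map induced by the based local isometry together with the path $\gamma_1\cdots\gamma_{i-1}$ determining the based elevation $\wt Y_i$, not an arbitrary conjugate; with that spelled out (as Remark~\ref{remk:convexsubgp} supports), the application of Lemma~\ref{lem:elevuniv} is legitimate and the argument closes.
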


\begin{remk}
	The ideas behind Proposition \ref{prop:routesep} are quite general; indeed one could make an analogue of the proposition for $X$ a cell complex (in this case the convex-cocompact subgroups would just be finitely presented subgroups, and the routes would be built from maps of finite cell complexes rather than cube complexes).
\end{remk}

\begin{remk}
	The assumption that $X$ is either locally finite or virtually special in Proposition \ref{prop:routesep} will only be used in the proof of the implication \ref{item:routes} $\implies$ \ref{item:prodsep}.
\end{remk}

Before proving Proposition \ref{prop:routesep}, we establish the following lemma.
Two pieces of notation for this lemma: if $\gamma$ is a path in a cube complex, then $[\gamma]$ denotes the homotopy class of $\gamma$ rel endpoints, and $\bar{\gamma}$ denotes the path obtained from $\gamma$ by reversing the orientation.

\begin{lem}\label{lem:realizations}
	Let $\fR=(y_0,Y_1,y_1,Y_2,y_2,\dots,Y_n,y_n)$ be a route in a non-positively curved cube complex $X$. Let $\gamma=(\gamma_1,\gamma_2,\dots,\gamma_n)$ be a path along $\fR$.
	Let $K_i<G=\pi_1(X,y_0)$ be the subgroup consisting of elements $[\gamma_1\dots\gamma_{i-1}\eta_i\bar{\gamma}_{i-1}\dots\bar{\gamma}_1]$, where $\eta_i$ is a loop in $X$ based at $y_{i-1}$ that lifts to a loop in $Y_i$ based at $y_{i-1}^i$.
	Then $K_1K_2\cdots K_n[\gamma_1\gamma_2\cdots\gamma_n]$ is precisely the set of homotopy classes of realizations of paths along $\fR$.
\end{lem}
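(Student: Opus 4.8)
The plan is to show the two-way inclusion between the set $K_1K_2\cdots K_n[\gamma_1\gamma_2\cdots\gamma_n]$ and the set of homotopy classes of realizations of paths along $\fR$. The key observation to keep in mind throughout is that, by definition, a path $\delta_i$ from $y_{i-1}$ to $y_i$ in $X$ is a path along $\fR$ in position $i$ if and only if it lifts to a path in $Y_i$ from $y_{i-1}^i$ to $y_i^i$; equivalently, $\delta_i\bar\gamma_i$ is a loop at $y_{i-1}$ that lifts to a loop in $Y_i$ based at $y_{i-1}^i$ (using that $\gamma_i$ itself is such a path and that $Y_i\to X$ is a local isometry, hence $\pi_1$-injective, so lifting of the concatenation is governed by lifting of each piece). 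Thus writing $\eta_i=\delta_i\bar\gamma_i$ gives a bijection between the choices of $\delta_i$ and the loops $\eta_i$ defining $K_i$.

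First I would prove that every realization of a path along $\fR$ represents an element of $K_1K_2\cdots K_n[\gamma_1\cdots\gamma_n]$. Take a path $\delta=(\delta_1,\dots,\delta_n)$ along $\fR$ and set $\eta_i=\delta_i\bar\gamma_i$ as above, so each $\eta_i$ is a loop at $y_{i-1}$ lifting to $Y_i$. The realization $\delta_1\delta_2\cdots\delta_n$ is homotopic rel endpoints to
\[
(\delta_1\bar\gamma_1)(\gamma_1\delta_2\bar\gamma_2\bar\gamma_1)(\gamma_1\gamma_2\delta_3\bar\gamma_3\bar\gamma_2\bar\gamma_1)\cdots(\gamma_1\cdots\gamma_{n-1}\delta_n\bar\gamma_n\bar\gamma_{n-1}\cdots\bar\gamma_1)(\gamma_1\gamma_2\cdots\gamma_n),
\]
which one checks by cancelling the $\bar\gamma_i\gamma_i$ pairs telescopically. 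The $i$-th bracketed factor is exactly $[\gamma_1\cdots\gamma_{i-1}\eta_i\bar\gamma_{i-1}\cdots\bar\gamma_1]\in K_i$, and the final factor is $[\gamma_1\cdots\gamma_n]$, so the product lies in $K_1K_2\cdots K_n[\gamma_1\cdots\gamma_n]$.

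Conversely, I would start with an arbitrary element $[\gamma_1\cdots\gamma_{i-1}\eta_i\bar\gamma_{i-1}\cdots\bar\gamma_1]$ of each $K_i$ (so $\eta_i$ lifts to a loop in $Y_i$ at $y_{i-1}^i$), and show that the product of these with $[\gamma_1\cdots\gamma_n]$ is the homotopy class of the realization of some path along $\fR$. Reversing the telescoping identity above, this product is represented by $(\eta_1\gamma_1)(\eta_2'\gamma_2)\cdots(\eta_n'\gamma_n)$ where $\eta_i'=\gamma_1\cdots\gamma_{i-1}\,[\text{conjugate back}]$ — more precisely, after conjugating, the product equals $[(\eta_1\gamma_1)(\eta_2\gamma_2)\cdots(\eta_n\gamma_n)]$ where each $\eta_i$ is placed at the basepoint $y_{i-1}$; setting $\delta_i=\eta_i\gamma_i$, each $\delta_i$ runs from $y_{i-1}$ to $y_i$ and lifts to $Y_i$ (a loop at $y_{i-1}^i$ followed by the lift of $\gamma_i$), so $\delta=(\delta_1,\dots,\delta_n)$ is a path along $\fR$ whose realization represents the given product. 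The main thing to be careful about is bookkeeping the conjugating prefixes $\gamma_1\cdots\gamma_{i-1}$ so that the algebraic product in $G=\pi_1(X,y_0)$ genuinely matches the geometric concatenation based at $y_0$; once the telescoping cancellation is set up cleanly in one direction, the reverse direction is essentially the same computation read backwards, so I do not expect a serious obstacle, just the need for care with basepoints and with the $\pi_1$-injectivity of $Y_i\to X$ when asserting that $\delta_i$ lifting is equivalent to $\eta_i$ lifting.
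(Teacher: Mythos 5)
Your proof is correct and takes essentially the same approach as the paper: the paper's one‑line proof is the telescoping identity $k_1k_2\cdots k_n[\gamma_1\cdots\gamma_n]=[\eta_1\gamma_1\eta_2\gamma_2\cdots\eta_n\gamma_n]$, which is exactly your computation read from the other end. You simply spell out both inclusions explicitly (including the uniqueness-of-lifts point that justifies the bijection $\delta_i\leftrightarrow\eta_i$), where the paper compresses the converse direction to ``it is not hard to see.''
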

\begin{proof}
	Pick elements $k_i=[\gamma_1\dots\gamma_{i-1}\eta_i\bar{\gamma}_{i-1}\dots\bar{\gamma}_1]\in K_i$ for each $i$.
	Then $k_1k_2\cdots k_n[\gamma_1\gamma_2\cdots\gamma_n]=[\eta_1\gamma_1\eta_2\gamma_2\cdots \eta_n\gamma_n]$. This is the homotopy class of the realization of $(\eta_1\gamma_1,\eta_2\gamma_2,\dots,\eta_n\gamma_n)$, which is a path along $\fR$, and it is not hard to see that every path along $\fR$ is of this form, up to homotopy of the segments.
\end{proof}

\begin{proof}[Proof of Proposition \ref{prop:routesep}]
	\ref{item:routes} $\implies$ \ref{item:prodsep}:
	Let $K_1,\dots,K_n<G$ be convex-cocompact subgroups.
	Let $\wt{X}\to X$ be the universal cover, $x\in X$ a vertex and $\ti{x}$ a lift to $\wt{X}$.
	We will work with $G=\pi_1(X,x)$, and the action on $\wt{X}$ will be induced by $x$ and $\ti{x}$.	
	Let $g\in G - K_1K_2\cdots K_n$.	
	Each $K_i$ stabilizes a convex subcomplex $\wt{Y}_i\subset\wt{X}$ with finite quotient $\wt{Y}_i/K_i$.
	Enlarging each $\wt{Y}_i$ if necessary, we can assume that they all contain $\ti{x}$ and $g^{-1}\ti{x}$; more precisely, if $X$ (and hence $\wt{X}$) is locally finite, then we may replace each $\wt{Y}_i$ with a cubical thickening, and use the fact that the action of $K_i$ on any cubical thickening of $\wt{Y}_i$ is still cocompact; on the other hand, if $X$ is virtually special then we construct the enlargements of the $\wt{Y}_i$ using Lemma \ref{lem:vspeccocomp}. 
	
	Now consider the finite route
	$$\fR=(x, \wt{Y}_1/K_1,x,\wt{Y}_2/K_2,x,\dots,\wt{Y}_n/K_n,x)$$
	in $X$. The maps $\wt{Y}_i/K_i\to X=\wt{X}/G$ are just quotient maps, and for the basepoints in $\wt{Y}_i/K_i$ we take $K_i\cdot\ti{x}$ -- except for $\wt{Y}_n/K_n$, where we take  $K_n\cdot \ti{x}$ and $K_n\cdot g^{-1}\ti{x}$ (in that order).
	Let $\gamma_1,\dots,\gamma_{n-1}$ all be the trivial path based at $x$ and let $\gamma_n$ be a loop based at $x$ that lifts to a path in $\wt{Y}_n$ from $\ti{x}$ to $g^{-1}\ti{x}$.
	Then $\gamma=(\gamma_1,\gamma_2,\dots,\gamma_n)$ is a path along $\fR$.
	By construction, we also have $[\gamma_1\gamma_2\cdots\gamma_n]=g^{-1}$.	
	
	Lemma \ref{lem:realizations} tells us that $K_1K_2\cdots K_n[\gamma_1\gamma_2\cdots\gamma_n]=K_1K_2\cdots K_ng^{-1}$ is precisely the set of homotopy classes of realizations of paths along $\fR$.
	(The way we have set things up means that the subgroups $K_i$ from Lemma \ref{lem:realizations} are precisely the subgroups $K_i$ we started the proof with.)
	Since $g\notin K_1K_2\cdots K_n$, the route $\fR$ is essential.
	
	By our assumption \ref{item:routes}, there is a finite-sheeted cover $\hat{X}\to X$ with no closed elevations of $\fR$. Let $\hat{G}<G$ be the subgroup corresponding to $\hat{X}$.
	By Lemma \ref{lem:routeelev}\ref{item:pathtoelev}, no path along $\fR$ (or realization of a path along $\fR$) has a closed lift to $\hat{X}$, so we deduce that $1\notin \hat{G}K_1K_2\cdots K_ng^{-1}$, or equivalently $g\notin \hat{G}K_1K_2\cdots K_n$.
	Since $\hat{G}$ has finite index in $G$, it follows that $K_1K_2\cdots K_n$ is separable in $G$, as required (see Lemma \ref{lem:sepequiv}).

	\ref{item:prodsep} $\implies$ \ref{item:routes}: Let $\fR=(y_0,Y_1,y_1,Y_2,y_2,\dots,Y_n,y_n)$ be a finite essential closed route in $X$.
Fix a path $\gamma=(\gamma_1,\gamma_2,\dots,\gamma_n)$ along $\fR$.
Lemma \ref{lem:realizations} then provides us with convex-cocompact subgroups $K_1,K_2,\dots,K_n<G$ such that $K_1K_2\cdots K_n[\gamma_1\gamma_2\cdots\gamma_n]$ is precisely the set of homotopy classes of realizations of paths along $\fR$.

The route $\fR$ is essential, so $1\notin K_1K_2\cdots K_n[\gamma_1\gamma_2\cdots\gamma_n]$.
The product $K_1K_2\cdots K_n$ is separable by \ref{item:prodsep}, hence there is a finite-index normal subgroup $\hat{G}\triangleleft G$ with $1\notin\hat{G}K_1K_2\cdots K_n[\gamma_1\gamma_2\cdots\gamma_n]$ (Lemma \ref{lem:sepequiv}).
Again using the fact that $K_1K_2\cdots K_n[\gamma_1\gamma_2\cdots\gamma_n]$ is precisely the set of homotopy classes of realizations of paths along $\fR$, we deduce that no path along $\fR$ has a closed lift to the finite cover $\hat{X}\to X$ corresponding to $\hat{G}$.
Finally, we deduce from Lemma \ref{lem:routeelev}\ref{item:elevtopath} that $\hat{X}$ has no closed elevation of $\fR$.
\end{proof}

\bigskip
\section{Controlling routes and hyperplanes}\label{sec:control}

In this section and the next we will prove the following theorem.

\begin{thm}\label{thm:embroutes}
	Let $X$ be a finite directly special cube complex and let $n\geq1$ be an integer.
	For any essential embedded closed route $\fR$ in $X$ of length $n$, there exists a finite cover $\hat{X}\to X$ with no closed elevations of $\fR$.
\end{thm}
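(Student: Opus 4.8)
The plan is to prove Theorem \ref{thm:embroutes} by induction on the length $n$ of $\fR$, augmented by an auxiliary lexicographic complexity, reducing at each stage to a situation in which the Walker--Imitator construction of Section \ref{sec:walker} directly produces the cover. For the base case $n=1$, an essential embedded closed route is just a locally convex subcomplex $Y_1\subset X$ equipped with basepoints $y_0^1,y_1^1$ over the single vertex $y_0=y_1$; the realisations of paths along $\fR$ form a single coset of the convex-cocompact subgroup $K_1=\mathrm{im}(\pi_1(Y_1)\to\pi_1(X))$, and essentialness says that coset avoids $1$, so killing closed elevations in a finite cover is exactly separability of $K_1$, which holds by Haglund--Wise \cite[Corollary 7.9]{HaglundWise08} (combined with Lemma \ref{lem:routeelev} to translate between elevations and lifts of paths).

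For the inductive step I would first normalise: using Lemma \ref{lem:factorarg} I may pass to a finite cover of $X$ and replace $\fR$ by one of its closed elevations, so by Proposition \ref{prop:routeembed} (i.e.\ Proposition \ref{prop:elevembed}) I may assume in addition that $X$ is directly special, that $\fR$ is embedded, and that no $Y_i$ inter-osculates with a hyperplane of $X$. Next comes the ``good case'': one isolates the geometric trapping hypotheses on $\fR$ and the hyperplanes of $X$ under which Lemma \ref{lem:routetrap} guarantees that the Walker--Imitator movements (walker roaming $X^{(1)}$, imitator roaming $Y_1^{(1)}$, both starting at $y_0$) have the property that, after the walker traverses any path along $\fR$, the imitator does not return to $y_0$. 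When those hypotheses hold, the cover furnished by the Walker--Imitator construction of Section \ref{sec:walker} has no closed elevation of $\fR$ and we are done.

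The substance is the reduction when those hypotheses fail. The failure is witnessed by a controlled family of hyperplanes of $X$ that a path along $\fR$ can cross and recross in a way incompatible with trapping. I would pass to the universal cover $\wt{X}$, choose an elevation $\wt{\fR}$ of $\fR$ carrying the relevant initial data, and then \emph{modify} $\wt{\fR}$: build a convex subcomplex $\wt{A}\subset\wt{X}$ from the offending hyperplane carriers (or from $\wt{Y}_1$), let $\wt{B}$ be its orthogonal complement with product subcomplex $\wt{P}_{\wt{A}}=\phi(\wt{A}\times\wt{B})$, and project the appropriate middle subcomplexes $\wt{Y}_i$ of $\wt{\fR}$ into $\wt{P}_{\wt{A}}$ (or onto $\wt{A}$). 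The Bridge Theorem \ref{thm:bridge} controls the images, while Propositions \ref{prop:projection} and \ref{prop:orthcomp} do the bookkeeping on stabilisers: $G_{\Pi_{\wt{A}}(\wt{Y}_i)}=G_{\wt{A}}\cap G_{\wt{Y}_i}$, each quotient $\Pi_{\wt{A}}(\wt{Y}_i)/G_{\Pi_{\wt{A}}(\wt{Y}_i)}$ embeds in $X$, and $G_{\wt{A}},G_{\wt{B}}$ commute and act compatibly with the product decomposition. Descending the modified elevation (packaged via Lemma \ref{lem:elevuniv}) yields a finite essential closed route $\fR'$ in a finite cover $X'\to X$ whose length, or whose auxiliary complexity, is strictly smaller. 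By induction there is a finite cover of $X'$ with no closed elevation of $\fR'$, and Lemmas \ref{lem:elevelev} and \ref{lem:factorarg} transport this to a finite cover of $X$ with no closed elevation of $\fR$.

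The main obstacle is precisely this last reduction: choosing which subcomplexes to project, onto which convex subcomplex, and which complexity parameter to force downward. The naive ``merge the two interior subcomplexes'' argument that suffices for $n=3$ breaks for $n\geq 4$ — this is the crux of the difficulty noted in the introduction — and it is exactly to repair it that one needs the careful inductive scheme of Section \ref{sec:control} together with Propositions \ref{prop:projection} and \ref{prop:orthcomp}. Keeping $\fR'$ embedded, essential, closed, and genuinely simpler, while the tower of finite covers stays finite, is the technical heart of the proof.
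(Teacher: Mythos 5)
Your outline tracks the paper's architecture closely, but you have explicitly left as ``the technical heart'' precisely the steps that constitute the paper's new content, and a few of the steps you do describe are off-target enough to matter.

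On the base case: you treat $n=1$ via Haglund--Wise, but the machinery you intend to use for the inductive step does not apply to small $n$. Property (Trap) only makes sense for $n\geq4$ (it speaks of hyperplanes disjoint from $Y_2,\dots,Y_{n-1}$), and the step that ensures $Y_2\cap Y_n=\emptyset$ (Lemma \ref{lem:2intn}) likewise requires $n\geq4$, since it splits $\fR$ at a vertex of $Y_2\cap Y_n$ into two strictly shorter closed routes. The paper therefore takes $n\leq3$ as the base case, citing separability of products of three convex-cocompact subgroups from \cite[Proposition 4.13]{Shepherd23} together with Proposition \ref{prop:routesep}. Your plan does not say what to do for $n=2,3$.

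On the complexity and the projection: your description ``build $\wt{A}$ from the offending hyperplane carriers (or from $\wt{Y}_1$), project the middle subcomplexes into $\wt{P}_{\wt{A}}$ (or onto $\wt{A}$)'' is too loose to run the induction. The paper does something more specific: it fixes an index $j$, takes $\wt{Z}=\Pi_{\wt{Y}_1}(\wt{Y}_j)$ for an elevation $\wt{\fR}$ chosen so that $\wt{Z}$ is an elevation of a $\leq$-maximal element $[Z]_j\in\cP_j(\fR)/{\sim}$, forms the orthogonal complement $\wt{W}$ of $\wt{Z}$, projects the \emph{intermediate} subcomplexes onto $\wt{W}$ (getting $\wt{W}_l=\Pi_{\wt{W}}(\wt{Y}_l)$), and replaces each $\wt{Y}_l$ ($2\leq l<j$) by the \emph{product} $\wt{P}_l=\phi(\wt{Z}\times\wt{W}_l)$ — not by a projection onto $\wt{A}$ or $\wt{P}_{\wt{A}}$. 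The resulting route $\fR_{proj}$ has the \emph{same} length $n$; what improves is a bespoke pair of complexities: $\mathrm{Hyp}(\fR)$ (the largest $j$ for which property (Hyp$_l$) holds for all $l\leq j$) increases, and $\sum_j\kappa_j(\fR)$ (a chain-length complexity built from the quasiorder on $\cP_j(\fR)$ using Lemma \ref{lem:hH=hH}) decreases. Without defining (Hyp$_j$), $\cP_j$, $\kappa_j$, and without the verification that $\fR_{proj}$ is essential and improves these invariants (Lemma \ref{lem:3propfRproj}, which uses the commutativity and factor-preservation statements of Proposition \ref{prop:orthcomp} in an essential way), the inductive step as you have stated it does not close: ``length, or auxiliary complexity, strictly smaller'' is exactly the claim that needs the new definitions and a nontrivial argument, and it is the one place you have acknowledged leaving blank. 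Finally, you have also omitted the intermediate reduction (Lemma \ref{lem:propTrap}) showing that once (Hyp$_j$) holds for all $j$, one can pass to a cover where closed elevations satisfy (Trap) — this uses the inductive hypothesis on routes of shorter length and is where the outer induction on $n$ is actually consumed.

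So: same high-level plan, same toolbox, but the parts that are new in the paper — the definitions of the auxiliary invariants, the precise construction of $\fR_{proj}$, Lemma \ref{lem:3propfRproj}, and the two-stage reduction via Lemmas \ref{lem:inductHyp}, \ref{lem:propTrap}, \ref{lem:2intn} — are the content you have flagged as missing, and they are not routine to supply.
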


We will then use Theorem \ref{thm:embroutes} to prove Theorem \ref{thm:prodsep} in Section \ref{sec:proof}.
The proof of Theorem \ref{thm:embroutes} will be an induction on the length $n$ of the route, with base case $n\leq3$ following from the separability of products of 3 convex-cocompact subgroups in $\pi_1(X)$ \cite{Shepherd23} (together with Proposition \ref{prop:routesep}).
The inductive argument will be built from a number of lemmas, which will be combined together in Section \ref{sec:walker}.

\subsection{Separating $Y_2$ and $Y_n$}

\begin{lem}\label{lem:2intn}
Let $\fR=(y_0,Y_1,y_1,Y_2,y_2,\dots,Y_n,y_n)$ be an essential embedded closed route in a finite non-positively curved cube complex $X$, with $n\geq4$.
If Theorem \ref{thm:embroutes} holds for routes of length less than $n$, then there is a finite cover $\hat{X}\to X$ such that every closed elevation $\hat{\fR}=(\hat{y}_0,\hat{Y}_1,\hat{y}_1,\hat{Y}_2,\hat{y}_2,\dots,\hat{Y}_n,\hat{y}_n)$ of $\fR$ to $\hat{X}$ has $\hat{Y}_2\cap\hat{Y}_n=\emptyset$.
\end{lem}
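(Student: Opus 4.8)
The plan is to trap the configuration into a shorter route to which we can apply the inductive hypothesis. Suppose for contradiction that we want to prevent the intersection $\hat Y_2 \cap \hat Y_n \ne \emptyset$. Whenever a closed elevation $\hat{\fR}$ has $\hat Y_2 \cap \hat Y_n \ne \emptyset$, pick a vertex $\hat z$ in the overlap; then $(\hat y_1, \hat Y_2, \hat z, \hat Y_n, \hat z, \hat Y_1, \hat y_1)$ — wait, that is not quite a route since consecutive subcomplexes must overlap and I need to be careful about which subcomplexes I keep. Let me reorganize: the idea is that if $\hat Y_2$ and $\hat Y_n$ overlap, then the ``middle'' subcomplexes $\hat Y_3, \dots, \hat Y_{n-1}$ together with the overlap point constitute a shorter closed route. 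Concretely, lifting to the universal cover $\wt X$ and using Lemma \ref{lem:elevuniv}, a closed elevation of $\fR$ to $\wt X$ corresponds to translates $g_0 \wt Y_1, g_1 \wt Y_2, \dots$ with $g_0 \ti y_0 = g_n \ti y_n$; the condition $\hat Y_2 \cap \hat Y_n \ne \emptyset$ downstairs means some translates of $\wt Y_2$ and $\wt Y_n$ meet. From such an intersection we can build a closed route of length $n-2$ (roughly: $Y_3, Y_4, \dots, Y_{n-1}$ plus one more piece joining the overlap back around), and if this shorter route were essential we could apply Theorem \ref{thm:embroutes} for length $< n$ to kill its closed elevations — and then Lemma \ref{lem:factorarg} would assemble the desired finite cover.

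The mechanics: first I would reduce, via Proposition \ref{prop:routeembed} (or directly since $X$ is already finite directly special) and Lemma \ref{lem:factorarg}, to handling one closed elevation at a time, or better, enumerate the finitely many ``bad'' patterns. For a fixed closed elevation $\fR'$ of $\fR$ to some intermediate finite cover $X'$ which has $Y_2' \cap Y_n' \ne \emptyset$ at a vertex $z'$, form the route $\fS = (z', Y_n', y_{n-1}', Y_{n-1}', y_{n-2}', \dots, Y_3', y_2', ?, z')$ — I need a subcomplex closing the loop from $y_2'$ back to $z' \in Y_2'$, and the natural choice is $Y_2'$ itself (which contains both $y_2'$ — well, $y_2^{(2)}$ maps to $y_2$ — and $z'$). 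So $\fS$ is a closed route of length $n - 2$: it traverses $Y_n', Y_{n-1}', \dots, Y_3', Y_2'$ with appropriate basepoints. A path along $\fR'$ that is essential gives, by splicing at $z'$, a path along $\fS$; conversely essential paths along $\fS$ glue back to essential paths along $\fR'$-like loops. The key claim is: $\fS$ is essential. This should follow because $\fR$ (hence $\fR'$) is essential and because the discarded segments in $Y_1'$ (from $y_n = y_0$ around through $y_1$ to... ) contribute something controllable — this is exactly where one uses that a null-homotopic path along $\fS$ would combine with a path through $Y_2'$ and $Y_1'$ to produce a null-homotopic path along $\fR'$, contradicting essentialness. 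One has to verify this homotopy bookkeeping carefully, possibly after passing to a further finite cover so that all elevations are embedded and the relevant subcomplexes interact cleanly.

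**The main obstacle** I anticipate is precisely establishing that the shorter route $\fS$ is essential — or, when it is not, extracting from its non-essentialness a contradiction with the essentialness of $\fR$. The subtlety is that discarding $Y_1$ and the overlap structure loses information: a path along $\fS$ need not obviously lift to a path along a genuine sub-route of $\fR$, because the basepoint $z'$ lies in the intersection $Y_2' \cap Y_n'$ rather than being one of the original $y_i$'s. To control this, I expect one needs to first pass to a cover (using Proposition \ref{prop:elevembed} / Proposition \ref{prop:routeembed}) where elevations of all the $Y_i$ embed and do not inter-osculate with hyperplanes, and then argue that closing up $\fR'$ through $z'$ changes homotopy classes of realizations only by elements of $\pi_1(Y_2')$ — which are ``already accounted for'' in the product $K_1 K_2 \cdots K_n$ of Lemma \ref{lem:realizations}. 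In effect, $K_2$ being sandwiched appropriately lets us absorb the correction term. If $\fS$ does turn out essential, apply Theorem \ref{thm:embroutes} in length $n-2 < n$ to get a finite cover of $X'$ with no closed elevations of $\fS$; do this for each of the finitely many bad $\fR'$, take a common finite cover, and conclude via Lemma \ref{lem:factorarg} (together with Lemma \ref{lem:elevelev}\ref{item:R''toR'}) that in the resulting cover $\hat X$, no closed elevation of $\fR$ can have $\hat Y_2 \cap \hat Y_n \ne \emptyset$, since such an elevation would descend a closed elevation of one of the $\fS$'s.
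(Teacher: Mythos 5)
There is a genuine gap: your key claim that the shorter route $\fS$ must be essential is false, and the mechanism you propose for proving it does not work.

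Concretely, your $\fS$ (up to orientation) is the route $\fR_x^2=(x,Y_2,y_2,Y_3,\dots,y_{n-1},Y_n,x)$ of length $n-1$, obtained by discarding $Y_1$ and closing up through a vertex $x\in Y_2\cap Y_n$. You argue that if $\fS$ had a null-homotopic path along it, then splicing it with a path through $Y_1$ would produce a null-homotopic path along $\fR$, contradicting essentialness of $\fR$. But this splice is only null-homotopic if the \emph{complementary} loop --- a path from $y_0$ to $y_1$ in $Y_1$, then from $y_1$ to $x$ in $Y_2$, then from $x$ back to $y_0$ in $Y_n$, i.e.\ a path along the length-$3$ route $\fR_x^1=(y_0,Y_1,y_1,Y_2,x,Y_n,y_n)$ --- is \emph{also} null-homotopic. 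Essentialness of $\fR$ therefore only tells you that $\fR_x^1$ and $\fR_x^2$ cannot both fail to be essential; it does not force $\fR_x^2$ alone to be essential. Indeed one can perfectly well have $\fR$ essential while $\fR_x^2$ is not, provided $\fR_x^1$ picks up the slack. In that situation your construction has nothing to apply Theorem~\ref{thm:embroutes} to, and the argument stalls.

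The paper's proof handles exactly this by running the dichotomy explicitly: for each vertex $x\in Y_2\cap Y_n$ it forms \emph{both} short routes $\fR_x^1$ (length $3$, through $Y_1,Y_2,Y_n$) and $\fR_x^2$ (length $n-1$, through $Y_2,\dots,Y_n$), observes that at least one must be essential (precisely the splicing argument above, run in the right direction), applies the inductive hypothesis to whichever one is essential, and then takes a common finite cover over all $x$. Your proposal covers the $\fR_x^2$ branch of this dichotomy but misses the $\fR_x^1$ branch entirely, so it is incomplete rather than merely a different route to the same result. (A smaller slip: at one point you write that $\fS$ has length $n-2$, but $Y_2,\dots,Y_n$ gives $n-1$ subcomplexes, matching the paper's $\fR_x^2$; this does not affect the application of the inductive hypothesis since $n-1<n$, but it is worth noting.) The remainder of your proposal --- enumerating the finitely many overlap vertices, applying Lemma~\ref{lem:factorarg} and Lemma~\ref{lem:elevelev}\ref{item:R''toR'}, and lifting the two-route contradiction upstairs to rule out $\hat Y_2\cap\hat Y_n\neq\emptyset$ --- matches the paper's bookkeeping.
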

\begin{proof}
If we already have $Y_2\cap Y_n=\emptyset$ then we can just take $\hat{X}=X$, so assume $Y_2\cap Y_n\neq\emptyset$.
Let $x\in Y_2\cap Y_n$ be a vertex, and consider the following closed routes:
$$\fR_x^1=(y_0,Y_1,y_1,Y_2,x,Y_n,y_n)\quad\text{and}\quad\fR_x^2=(x,Y_2,y_2,Y_3,y_3,\dots,y_{n-1},Y_n,x)$$

We claim that one of $\fR_x^1$ and $\fR_x^2$ is essential. Indeed, otherwise there exists a null-homotopic path $(\gamma_1,\gamma_2,\gamma_n)$ along $\fR_x^1$ and a null-homotopic path $(\delta_2,\delta_3,\dots,\delta_n)$ along $\fR_x^2$ (we choose the indices here to match the indices of the $Y_i$).
But then $(\gamma_1,\gamma_2\delta_2,\delta_3,\dots,\delta_{n-1},\delta_n\gamma_n)$ is a null-homotopic path along $\fR$, contradicting the essentialness of $\fR$.

By hypothesis, Theorem \ref{thm:embroutes} holds for routes of length less than $n$.
So if $\fR_x^1$ (resp. $\fR_x^2$) is essential, then there exists a finite cover $\hat{X}_x\to X$ with no closed elevations of $\fR_x^1$ (resp. $\fR_x^2$).
(If $\fR_x^1$ and $\fR_x^2$ are both essential then just pick one of them when defining $\hat{X}_x$.)
Letting $\hat{X}\to X$ be a finite cover that factors through all of the finitely many $\hat{X}_x\to X$ (where $x$ ranges over all vertices in $Y_2\cap Y_n$), we see that there is no $x$ such that $\fR_x^1$ and $\fR_x^2$ both have a closed elevation to $\hat{X}$ (using Lemma \ref{lem:elevelev}\ref{item:noclosedelev}). 

We claim that each closed elevation $\hat{\fR}=(\hat{y}_0,\hat{Y}_1,\hat{y}_1,\hat{Y}_2,\hat{y}_2,\dots,\hat{Y}_n,\hat{y}_n)$  of $\fR$ to $\hat{X}$ satisfies $\hat{Y}_2\cap\hat{Y}_n=\emptyset$.
Indeed, suppose for contradiction that $\hat{Y}_2\cap\hat{Y}_n\neq\emptyset$.
Say $\hat{x}\in \hat{Y}_2\cap\hat{Y}_n$ is a vertex, and let $x$ be the image of $\hat{x}$ in $X$ (note that $x\in Y_2\cap Y_n$).
But then the routes
$$\hat{\fR}_x^1=(\hat{y}_0,\hat{Y}_1,\hat{y}_1,\hat{Y}_2,\hat{x},\hat{Y}_n,\hat{y}_n)\quad\text{and}\quad\hat{\fR}_x^2=(\hat{x},\hat{Y}_2,\hat{y}_2,\hat{Y}_3,\hat{y}_3,\dots,\hat{y}_{n-1},\hat{Y}_n,\hat{x})$$
are closed elevations of $\fR_x^1$ and $\fR_x^2$ respectively to $\hat{X}$, contradicting the construction of $\hat{X}$.
\end{proof}

\subsection{The complexity $\kappa_j(\fR)$}\label{subsec:kappaj}

The arguments in Section \ref{subsec:Hypj} will require induction not just on the length $n$ of the route, but on an additional measure of complexity $\kappa_j(\fR)$, which we define in this section.
First, we define a certain collection of subcomplexes $\cP_j(\fR)$ associated to a route $\fR$.

\begin{defn}(The collection of subcomplexes $\cP_j=\cP_j(\fR)$)\\\label{defn:cP}
Let $\fR=(y_0,Y_1,y_1,Y_2,y_2,\dots,Y_n,y_n)$ be an embedded route in a finite directly special cube complex $X$, with $n\geq4$. For $2\leq j\leq n-1$, let $\cP_j=\cP_j(\fR)$ denote the collection of locally convex subcomplexes $Z\subset Y_1$ such that there exists an elevation $\wt{\fR}=(\ti{y}_0,\wt{Y}_1,\ti{y}_1,\wt{Y}_2,\ti{y}_2,\dots,\wt{Y}_n,\ti{y}_n)$ of $\fR$ to the universal cover $\wt{X}\to X$ with $\Pi_{\wt{Y}_1}(\wt{Y}_j)$ an elevation of $Z$ (remember that $\Pi_{\wt{A}}$ denotes the projection map $\wt{X}\to\wt{A}$ -- see Definition \ref{defn:projection}).
\end{defn}

\begin{remk}\label{remk:iselevation}
	The subcomplex $\Pi_{\wt{Y}_1}(\wt{Y}_j)\subset\wt{X}$ is always an elevation of some locally convex subcomplex of $Y_1$ by Proposition \ref{prop:projection}, so each $\cP_j$ is non-empty.
\end{remk}

\begin{remk}\label{remk:cP2}
	For any elevation $\wt{\fR}$ of $\fR$, we have $\Pi_{\wt{Y}_1}(\wt{Y}_2)=\wt{Y}_1\cap\wt{Y}_2$, and this is an elevation of the component of $Y_1\cap Y_2$ that contains $y_1$. Therefore, $\cP_2$ consists of just a single subcomplex.
\end{remk}

\begin{defn}(The quasiorder on $\cP_j$)\\\label{defn:quasiorder}
	Let $\cP_j$ be as in Definition \ref{defn:cP}.
	Let $\leq$ be the quasiorder on $\cP_j$ defined by $Z_1\leq Z_2$ if there are elevations $\wt{Z}_1,\wt{Z}_2$ to $\wt{X}$ of $Z_1,Z_2$ respectively with $\hH(\wt{Z}_1)\subset\hH(\wt{Z}_2)$.
	(Reflexivity of $\leq$ is immediate; transitivity follows easily from the observation that, for each $Z\in\cP_j$, the deck group of $\wt{X}\to X$ acts transitively on the set of elevations of $Z$ -- see Remark \ref{remk:univelev}.)
\end{defn}

\begin{defn}(The complexity $\kappa_j=\kappa_j(\fR)$)\\\label{defn:kappaj}
	Define an equivalence relation $\sim$ on $\cP_j$ where $Z_1\sim Z_2$ if $Z_1\leq Z_2\leq Z_1$.
	Denote each equivalence class by $[Z_1]_j$.
	The quasiorder $\leq$ on $\cP_j$ induces a partial order on $\cP_j/\sim$, which we again denote by $\leq$.
	Write $\kappa_j=\kappa_j(\fR)$ for the length of the longest $\leq$-chain in $\cP_j/\sim$.
	Note that $\kappa_j$ is finite because $\cP_j$ (and hence $\cP_j/\sim$) is finite. 
	We think of the numbers $\kappa_j(\fR)$ as measures of complexity for $\fR$.
	Since $\kappa_j$ is finite, $\cP_j/\sim$ contains a $\leq$-maximal element.	
\end{defn}

\begin{lem}\label{lem:Z1Z2Z1}
	If $Z_1\leq Z_2\leq Z_1$ in $\cP_j$, and if $\wt{Z}_1,\wt{Z}_2$ are elevations to $\wt{X}$ of $Z_1,Z_2$ respectively with $\hH(\wt{Z}_1)\subset\hH(\wt{Z}_2)$, then $\hH(\wt{Z}_1)=\hH(\wt{Z}_2)$.
\end{lem}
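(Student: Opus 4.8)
The plan is to reduce to the situation already handled in Lemma~\ref{lem:hH=hH} --- two elevations of the \emph{same} subcomplex --- by first showing that $Z_1$ and $Z_2$ have isomorphic universal covers. Since $X$ is finite, every $Z\in\cP_j$ is a finite locally convex subcomplex, so its universal cover is cocompact, and (as in Remark~\ref{remk:univelev} and the proof of Lemma~\ref{lem:hH=hH}) every elevation of $Z$ to $\wt{X}$ may be identified with that universal cover. Write $\wt{U}_i$ for the universal cover of $Z_i$. The first observation is that the argument in the \emph{first} paragraph of the proof of Lemma~\ref{lem:hH=hH} never uses that the two elevations come from a common subcomplex: if $\wt{A},\wt{B}\subset\wt{X}$ are convex subcomplexes with $\hH(\wt{A})\subseteq\hH(\wt{B})$, then applying the Bridge Theorem (Theorem~\ref{thm:bridge}) shows that $\Pi_{\wt{B}}$ restricts to an isomorphism of $\wt{A}$ onto a convex subcomplex $\wt{C}\subseteq\wt{B}$ with $\hH(\wt{C})=\hH(\wt{A})$, and by Lemma~\ref{lem:pathunion} we have $\hH(\wt{A})=\hH(\wt{B})$ if and only if $\wt{C}=\wt{B}$.

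Next I would show $\wt{U}_1\cong\wt{U}_2$. Since $Z_1\leq Z_2$, Definition~\ref{defn:quasiorder} supplies elevations of $Z_1$ and $Z_2$ with nested hyperplane sets, so by the previous step there is an isomorphism $f$ of $\wt{U}_1$ onto a convex subcomplex of $\wt{U}_2$; symmetrically $Z_2\leq Z_1$ gives an isomorphism $g$ of $\wt{U}_2$ onto a convex subcomplex of $\wt{U}_1$. Then $g\circ f$ is an isomorphism of $\wt{U}_1$ onto a convex subcomplex $\wt{W}\subseteq\wt{U}_1$, and I would invoke the ball-counting argument from the \emph{second} paragraph of the proof of Lemma~\ref{lem:hH=hH}: the combinatorial metric of the convex subcomplex $\wt{W}$ agrees with that of $\wt{U}_1$, so the $R$-ball of $\wt{W}$ about any vertex is contained in the $R$-ball of $\wt{U}_1$ about it, with equality for all $R$ exactly when $\wt{W}=\wt{U}_1$; since $\wt{U}_1$ is cocompact it has finitely many ball-growth sequences, so choosing a vertex with a $\leq$-maximal one and comparing it with its image under $g\circ f$ forces $\wt{W}=\wt{U}_1$. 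Hence $g\circ f$ is onto, so $f$ and $g$ are both isomorphisms, and in particular $\wt{U}_1\cong\wt{U}_2$.

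Finally, given the elevations $\wt{Z}_1,\wt{Z}_2$ in the statement with $\hH(\wt{Z}_1)\subseteq\hH(\wt{Z}_2)$, the first step produces a convex subcomplex $\wt{C}\subseteq\wt{Z}_2$ with $\hH(\wt{C})=\hH(\wt{Z}_1)$ that is isomorphic to $\wt{Z}_1$. Since $\wt{Z}_1\cong\wt{U}_1\cong\wt{U}_2\cong\wt{Z}_2$, the subcomplex $\wt{C}$ is a convex subcomplex of the cocompact complex $\wt{Z}_2$ which is isomorphic to $\wt{Z}_2$, so the same ball-counting argument, applied inside $\wt{Z}_2$, forces $\wt{C}=\wt{Z}_2$; therefore $\hH(\wt{Z}_1)=\hH(\wt{C})=\hH(\wt{Z}_2)$, as required.

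I expect the main obstacle to be the discrepancy between the hypothesis and the conclusion: $Z_1\sim Z_2$ only furnishes \emph{some} pair of elevations with nested hyperplane sets in each direction, whereas the lemma demands the conclusion for an \emph{arbitrary} such pair; bridging this is exactly what requires the intermediate isomorphism $\wt{U}_1\cong\wt{U}_2$ and the re-use of the ball-counting trick for two merely isomorphic cocompact complexes rather than two elevations of a common subcomplex. A minor technical point to keep honest is that the ``convex subcomplex of a convex subcomplex'' subcomplexes appearing above are genuinely convex in $\wt{X}$, so that combinatorial distances agree throughout; this follows from Lemma~\ref{lem:pathunion}.
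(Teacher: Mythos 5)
Your proof is correct, but it takes a substantially different (and longer) path than the paper's. The paper's proof is three lines: from $Z_2\leq Z_1$ pick elevations $\wt{Z}'_2,\wt{Z}'_1$ with $\hH(\wt{Z}'_2)\subset\hH(\wt{Z}'_1)$; apply a deck transformation (using the transitivity noted in Remark~\ref{remk:univelev}) to arrange $\wt{Z}'_2=\wt{Z}_2$; then $\hH(\wt{Z}_1)\subset\hH(\wt{Z}_2)\subset\hH(\wt{Z}'_1)$, and since $\wt{Z}_1,\wt{Z}'_1$ are elevations of the same $Z_1$, Lemma~\ref{lem:hH=hH} applied as a black box forces all inclusions to be equalities. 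You instead disassemble Lemma~\ref{lem:hH=hH} into its two halves --- the Bridge-Theorem argument showing a projection with nested hyperplane sets is an isomorphism onto a convex subcomplex, and the cocompact ball-counting argument showing a convex subcomplex isomorphic to a cocompact ambient complex must be the whole thing --- and redeploy these twice: once to show $\wt{U}_1\cong\wt{U}_2$, and once more inside $\wt{Z}_2$. The key observation you identify --- that transitivity of the deck group on elevations lets one align two different pairs of elevations --- is precisely the move the paper makes to shortcut all of this, and you miss it; in exchange you produce the stronger intermediate fact that all elevations of $Z_1$ and $Z_2$ are mutually isomorphic, and an implicit generalization of Lemma~\ref{lem:hH=hH} to two convex cocompact subcomplexes not assumed to be elevations of a common complex. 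Both are correct; the paper's is more economical.
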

\begin{proof}
	As $Z_2\leq Z_1$, there are elevations $\wt{Z}'_2, \wt{Z}'_1$ to $\wt{X}$ with $\hH(\wt{Z}'_2)\subset\hH(\wt{Z}'_1)$.
	Applying an appropriate deck transformation to $\wt{Z}'_2, \wt{Z}'_1$, we may assume that $\wt{Z}_2=\wt{Z}'_2$.
	Then $\hH(\wt{Z}_1)\subset\hH(\wt{Z}_2)\subset\hH(\wt{Z}'_1)$, so Lemma \ref{lem:hH=hH} implies that $\hH(\wt{Z}_1)=\hH(\wt{Z}_2)=\hH(\wt{Z}'_1)$.
\end{proof}

\begin{lem}\label{lem:leqmaximal}
$[Z_1]_j\in\cP_j/\sim$ is $\leq$-maximal if and only if there is no $Z_2\in\cP_j$ and elevations $\wt{Z}_1,\wt{Z}_2$ to $\wt{X}$ with $\hH(\wt{Z}_1)\subsetneq\hH(\wt{Z}_2)$.
\end{lem}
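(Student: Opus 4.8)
The plan is to prove the two implications directly, since the statement is essentially a repackaging of Definition \ref{defn:quasiorder} together with Lemma \ref{lem:Z1Z2Z1}; I expect the argument to be short and I do not anticipate a genuine obstacle, only some care in quantifiers.

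For the ``only if'' direction I would assume $[Z_1]_j$ is $\leq$-maximal and argue by contradiction: suppose there exist $Z_2\in\cP_j$ and elevations $\wt{Z}_1,\wt{Z}_2$ to $\wt{X}$ of $Z_1,Z_2$ respectively with $\hH(\wt{Z}_1)\subsetneq\hH(\wt{Z}_2)$. In particular $\hH(\wt{Z}_1)\subset\hH(\wt{Z}_2)$, so $Z_1\leq Z_2$ by Definition \ref{defn:quasiorder}, hence $[Z_1]_j\leq[Z_2]_j$ in $\cP_j/\sim$. Maximality of $[Z_1]_j$ then forces $[Z_1]_j=[Z_2]_j$, i.e.\ $Z_1\sim Z_2$, which by definition of $\sim$ means $Z_1\leq Z_2\leq Z_1$ in $\cP_j$. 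Now I would invoke Lemma \ref{lem:Z1Z2Z1} on the \emph{particular} elevations $\wt{Z}_1,\wt{Z}_2$ from the contradiction hypothesis, which do satisfy $\hH(\wt{Z}_1)\subset\hH(\wt{Z}_2)$; the lemma yields $\hH(\wt{Z}_1)=\hH(\wt{Z}_2)$, contradicting the assumed strict inclusion. (The key point that makes this clean is that Lemma \ref{lem:Z1Z2Z1} is stated for all pairs of elevations realizing the inclusion, so one is free to feed it exactly the elevations at hand.)

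For the ``if'' direction I would assume there is no $Z_2\in\cP_j$ and no elevations $\wt{Z}_1,\wt{Z}_2$ with $\hH(\wt{Z}_1)\subsetneq\hH(\wt{Z}_2)$, and show $[Z_1]_j$ is $\leq$-maximal. Take any $Z_2\in\cP_j$ with $[Z_1]_j\leq[Z_2]_j$; by Definition \ref{defn:quasiorder} there are elevations $\wt{Z}_1,\wt{Z}_2$ of $Z_1,Z_2$ with $\hH(\wt{Z}_1)\subset\hH(\wt{Z}_2)$. By hypothesis this inclusion cannot be strict, so $\hH(\wt{Z}_1)=\hH(\wt{Z}_2)$; reading Definition \ref{defn:quasiorder} in the reverse direction (using the same two elevations) gives $Z_2\leq Z_1$, hence $[Z_2]_j\leq[Z_1]_j$, and by antisymmetry of the induced partial order $[Z_1]_j=[Z_2]_j$. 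Since $Z_2$ was arbitrary, $[Z_1]_j$ is $\leq$-maximal, completing the proof.
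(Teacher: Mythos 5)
Your proof is correct and uses exactly the same ingredients as the paper (unwinding Definition \ref{defn:quasiorder} and invoking Lemma \ref{lem:Z1Z2Z1}); the only cosmetic difference is that you argue both implications directly (one by contradiction), whereas the paper proves each as a contrapositive, which is logically identical.
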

\begin{proof}
	If $[Z_1]_j$ is not $\leq$-maximal then there is $[Z_2]_j\in\cP_j/\sim$ with $[Z_1]_j\leq[Z_2]_j$ and $[Z_1]_j\neq[Z_2]_j$. 
	Hence $Z_1\leq Z_2$ but $Z_2\nleq Z_1$. This implies that there are elevations $\wt{Z}_1,\wt{Z}_2$ of $Z_1,Z_2$ to $\wt{X}$ with $\hH(\wt{Z}_1)\subsetneq\hH(\wt{Z}_2)$.
	
	Conversely, suppose there is $Z_2\in\cP_j$ and elevations $\wt{Z}_1,\wt{Z}_2$ of $Z_1,Z_2$ to $\wt{X}$ with $\hH(\wt{Z}_1)\subsetneq\hH(\wt{Z}_2)$. Then $Z_1\leq Z_2$, and, by Lemma \ref{lem:Z1Z2Z1}, $Z_2\nleq Z_1$. Hence $[Z_1]_j$ is not $\leq$-maximal.
\end{proof}

The following lemma shows that the complexity $\kappa_j$ can only decrease when passing to a finite cover.

\begin{lem}\label{lem:kappadown}
	Let $\fR=(y_0,Y_1,y_1,Y_2,y_2,\dots,Y_n,y_n)$ be an embedded route in a finite directly special cube complex $X$, with $n\geq4$. Let $\hat{\fR}$ be an elevation of $\fR$ to a finite cover $\hat{X}\to X$ and let $2\leq j\leq n-1$.
	Then each complex in $\cP_j(\hat{\fR})$ is the elevation of a complex in $\cP_j(\fR)$. Moreover, this induces a $\leq$-preserving map $\cP_j(\hat{\fR})/\sim\to\cP_j(\fR)/\sim$, which sends each $\leq$-chain in $\cP_j(\hat{\fR})/\sim$ to a $\leq$-chain in $\cP_j(\fR)/\sim$ of the same length.
	In particular, $\kappa_j(\hat{\fR})\leq\kappa_j(\fR)$.
\end{lem}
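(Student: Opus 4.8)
The plan is to construct an explicit order map $f\colon\cP_j(\hat\fR)\to\cP_j(\fR)$ sending each $Z\in\cP_j(\hat\fR)$ to its image $Z'$ under the covering $\hat X\to X$, and then to show that $f$ collapses no strict chain. Two facts are used throughout: $\wt X$ is simultaneously the universal cover of $X$ and of $\hat X$, and any elevation $\wt\fR$ of $\hat\fR$ to $\wt X$ is also an elevation of $\fR$ to $\wt X$ (Lemma \ref{lem:elevelev}, applied to $\wt X\to\hat X\to X$). Note that $\hat X$, being a cover of a finite directly special complex, is again finite and directly special, and $\hat\fR$ is embedded by Lemma \ref{lem:elevelev}, so $\cP_j(\hat\fR)$ makes sense.

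First I would prove part (1) and simultaneously set up $f$. Fix $Z\in\cP_j(\hat\fR)$, witnessed by an elevation $\wt\fR$ of $\hat\fR$ to $\wt X$ with $\wt Z:=\Pi_{\wt Y_1}(\wt Y_j)$ an elevation of $Z$. Since $\wt\fR$ is also an elevation of $\fR$, Proposition \ref{prop:projection} (see Remark \ref{remk:iselevation}) produces a locally convex $Z'\subseteq Y_1$ of which $\wt Z$ is an elevation; this $Z'$ is precisely the image of $Z$ in $X$, and the same $\wt\fR$ witnesses $Z'\in\cP_j(\fR)$. Since $\wt Z$ is a component of the full preimage of $Z'$ in $\wt X$ and $\wt X\to\hat X\to X$ are coverings, its image $Z$ is a component of the full preimage of $Z'$ in $\hat X$; that is, $Z$ is an elevation of $Z'$ (the analogue for subcomplexes of Lemma \ref{lem:elevelev}\ref{item:R''toR'}), and consequently every elevation of $Z$ to $\wt X$ is also an elevation of $Z'$ to $\wt X$. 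Setting $f(Z)=Z'$ thus gives a well-defined map exhibiting each member of $\cP_j(\hat\fR)$ as an elevation of a member of $\cP_j(\fR)$.

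Next I would verify that $f$ is $\leq$-preserving. If $Z_1\leq Z_2$ in $\cP_j(\hat\fR)$, choose elevations $\wt Z_1,\wt Z_2$ of $Z_1,Z_2$ to $\wt X$ with $\hH(\wt Z_1)\subseteq\hH(\wt Z_2)$; by the last sentence of the previous paragraph these are also elevations of $f(Z_1),f(Z_2)$, so the same containment witnesses $f(Z_1)\leq f(Z_2)$ in $\cP_j(\fR)$. Hence $f$ respects $\sim$ and descends to a $\leq$-preserving map $\bar f\colon\cP_j(\hat\fR)/{\sim}\to\cP_j(\fR)/{\sim}$.

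It remains to show $\bar f$ does not collapse a strict chain, and this is the main obstacle: a priori the larger deck group available for $\fR$ could identify two subcomplexes that are $\sim$-inequivalent for $\hat\fR$, and Lemma \ref{lem:Z1Z2Z1} (ultimately Lemma \ref{lem:hH=hH}) is exactly what rules this out. Suppose $[Z_1]_j<[Z_2]_j$ in $\cP_j(\hat\fR)/{\sim}$ but $[f(Z_1)]_j=[f(Z_2)]_j$. From $Z_1\leq Z_2$ take elevations $\wt Z_1,\wt Z_2$ with $\hH(\wt Z_1)\subseteq\hH(\wt Z_2)$ as above. Since $f(Z_1)\sim f(Z_2)$, i.e.\ $f(Z_1)\leq f(Z_2)\leq f(Z_1)$ in $\cP_j(\fR)$, and $\wt Z_1,\wt Z_2$ are elevations of $f(Z_1),f(Z_2)$, Lemma \ref{lem:Z1Z2Z1} forces $\hH(\wt Z_1)=\hH(\wt Z_2)$. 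Viewing $\wt Z_1,\wt Z_2$ again as elevations of $Z_1,Z_2$, the reverse containment now witnesses $Z_2\leq Z_1$ in $\cP_j(\hat\fR)$, whence $Z_1\sim Z_2$ — contradicting strictness. Therefore $\bar f$ carries every strict chain of $\cP_j(\hat\fR)/{\sim}$ injectively onto a strict chain of the same length in $\cP_j(\fR)/{\sim}$; applying this to a longest chain yields $\kappa_j(\hat\fR)\leq\kappa_j(\fR)$.
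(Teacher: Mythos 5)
Your proof is correct and follows essentially the same approach as the paper: both pass through transitivity of elevations under the tower $\wt X\to\hat X\to X$ (via Remark \ref{remk:iselevation}) to define the map, then note that witnessing elevations in $\wt X$ for $\leq$ on $\cP_j(\hat\fR)$ are automatically witnessing elevations for $\leq$ on $\cP_j(\fR)$, and finally invoke Lemma \ref{lem:Z1Z2Z1} to show strictness is preserved. The only cosmetic difference is that you phrase the last step as a proof by contradiction (``if $f(Z_1)\sim f(Z_2)$ then $Z_1\sim Z_2$'') whereas the paper argues the contrapositive directly (``if $\hat Z_1\nsim\hat Z_2$ one may choose a strict inclusion, hence $Z_1\nsim Z_2$''), but these are the same argument.
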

\begin{proof}
	Let $\hat{Z}\in\cP_j(\hat{\fR})$. Then there is an elevation $\wt{\fR}=(\ti{y}_0,\wt{Y}_1,\ti{y}_1,\wt{Y}_2,\ti{y}_2,\dots,\wt{Y}_n,\ti{y}_n)$ of $\hat{\fR}$ to the universal cover $\wt{X}\to X$ with $\Pi_{\wt{Y}_1}(\wt{Y}_j)$ an elevation of $\hat{Z}$.
	But $\wt{\fR}$ is also an elevation of $\fR$, so Remark \ref{remk:iselevation} implies that $\Pi_{\wt{Y}_1}(\wt{Y}_j)$ and $\hat{Z}$ are both elevations of some complex $Z\in\cP_j(\fR)$.
	
	If $\hat{Z}_1\leq\hat{Z}_2$ in $\cP_j(\hat{\fR})$ are elevations of $Z_1,Z_2$ in $\cP_j(\fR)$, then there are elevations $\wt{Z}_1,\wt{Z}_2$ to $\wt{X}$ of $\hat{Z}_1,\hat{Z}_2$ respectively with $\hH(\wt{Z}_1)\subset\hH(\wt{Z}_2)$; but $\wt{Z}_1,\wt{Z}_2$ are also elevations of $Z_1,Z_2$, so $Z_1\leq Z_2$.	
	It follows that the map $\cP_j(\hat{\fR})\to\cP_j(\fR)$ preserves both $\leq$ and $\sim$, so it induces a $\leq$-preserving map $\cP_j(\hat{\fR})/\sim\to\cP_j(\fR)/\sim$.
	
	Finally, if $\hat{Z}_1\leq\hat{Z}_2$ and $Z_1\leq Z_2$ are as above, but with $\hat{Z}_1\nsim\hat{Z}_2$, then there must be elevations $\wt{Z}_1,\wt{Z}_2$ to $\wt{X}$ that satisfy a strict inclusion $\hH(\wt{Z}_1)\subsetneq\hH(\wt{Z}_2)$.
	By Lemma \ref{lem:Z1Z2Z1}, $Z_1\nsim Z_2$.
	Therefore the map $\cP_j(\hat{\fR})/\sim\to\cP_j(\fR)/\sim$ preserves the lengths of $\leq$-chains.
\end{proof}

\subsection{Property (Hyp$_j$)}\label{subsec:Hypj}

The following property will be important for the arguments in Section \ref{subsec:trap}.

\begin{defn}(Property (Hyp$_j$))\\\label{defn:Hypj}
	Let $\fR=(y_0,Y_1,y_1,Y_2,y_2,\dots,Y_n,y_n)$ be a route in a finite directly special cube complex $X$, with $n\geq4$. For $2\leq j\leq n-1$, we say that $\fR$ satisfies property (Hyp$_j$) if, for every elevation $\wt{\fR}=(\ti{y}_0,\wt{Y}_1,\ti{y}_1,\wt{Y}_2,\ti{y}_2,\dots,\wt{Y}_n,\ti{y}_n)$ of $\fR$ to the universal cover $\wt{X}\to X$, we have $\hH(\wt{Y}_1)\cap\hH(\wt{Y}_j)=\cap_{i=1}^{j}\hH(\wt{Y}_i)$ (remember that $\hH(\wt{A})$ is the collection of hyperplanes that intersect $\wt{A}$ -- see Definition \ref{defn:convexhulls}).
\end{defn}

The aim of this section is to reduce Theorem \ref{thm:embroutes} to the case where the route $\fR$ satisfies property (Hyp$_j$) for all $2\leq j\leq n-1$ (see Lemma \ref{lem:inductHyp}).
The method will be to replace a given route $\fR$ with a finite collection of routes $\Omega$ that satisfy property (Hyp$_j$) for more values of $j$.
The strategy for this replacement will be to consider an elevation $\wt{\fR}$ of $\fR$ to the universal cover $\wt{X}$ of $X$, modify $\wt{\fR}$ using the projection map to a certain convex subcomplex of $\wt{X}$, and then descend the resulting route to a route in $X$.
This will involve results from Section \ref{sec:projections} and also the complexities $\kappa_j(\fR)$ from Section \ref{subsec:kappaj}.

\begin{defn}(The route $\wt{\fR}_{proj}$)\\\label{defn:wtfRproj}
	Let $\fR=(y_0,Y_1,y_1,Y_2,y_2,\dots,Y_n,y_n)$ be an essential embedded closed route in a finite directly special cube complex $X$, with $n\geq4$. Fix $3\leq j\leq n-1$.	
	Let $\wt{\fR}=(\ti{y}_0,\wt{Y}_1,\ti{y}_1,\wt{Y}_2,\ti{y}_2,\dots,\wt{Y}_n,\ti{y}_n)$ be an elevation of $\fR$ to the universal cover $\wt{X}\to X$.
	Suppose $\wt{Z}=\Pi_{\wt{Y}_1}(\wt{Y}_j)$ is an elevation of a subcomplex $Z\in\cP_j(\fR)$, with $[Z]_j$ $\leq$-maximal.
	
	Let $\wt{W}$ be an orthogonal complement of $\wt{Z}$.
	Recall that we have an isomorphism $\phi:\wt{Z}\times\wt{W}\to\Hull(\wt{Z}\cup\wt{W})$.
	For $2\leq l<j$, define $\wt{W}_l=\Pi_{\wt{W}}(\wt{Y}_l)$ and $\wt{P}_l=\phi(\wt{Z}\times\wt{W}_l)$.
	Then for $1\leq l<j$, define $\ti{x}_l=\Pi_{\wt{W}}(\ti{y}_l)$.
	This gives us a route
	\begin{equation}\label{wtfRproj}
		\wt{\fR}_{proj}=\wt{\fR}_{proj}(\wt{\fR},j)=(\ti{y}_0,\wt{Y}_1,\ti{x}_1,\wt{P}_2,\ti{x}_2,\dots,\wt{P}_{j-1},\ti{x}_{j-1},\wt{Y}_j,\ti{y}_j,\dots,\wt{Y}_n,\ti{y}_n)
	\end{equation}
	in $\wt{X}$ (see Figure \ref{fig:wtRproj}). Note that we really have $\ti{x}_1\in\wt{Y}_1$ because $\ti{x}_1\in\Pi_{\wt{W}}(\wt{Y}_1)=\wt{Y}_1\cap\wt{W}$ (and $\wt{Y}_1\cap\wt{W}\supset\wt{Z}\cap\wt{W}\neq\emptyset$).
	Similarly, $\ti{x}_{j-1}\in\wt{Y}_j$ because $\ti{x}_{j-1}\in\Pi_{\wt{W}}(\wt{Y}_j)=\wt{Y}_j\cap\wt{W}$ (and $\wt{Y}_j\cap\wt{W}\neq\emptyset$ because one of the slices $\phi(\wt{Z}\times\{\ti{w}\})$ is contained in $\wt{Y}_j$ by Theorem \ref{thm:bridge}).
\end{defn}

\begin{figure}[H]
	\centering
	\scalebox{0.8}{
		\begin{tikzpicture}[auto,node distance=2cm,
			thick,every node/.style={circle,draw,font=\small},
			every loop/.style={min distance=2cm},
			hull/.style={draw=none},
			]
			\tikzstyle{label}=[draw=none,font=\LARGE]

			\draw[rounded corners=20pt,Green] (0,0) rectangle (3,9) {};
			\draw[rounded corners=20pt,red] (2,1) rectangle (5,3) {};
			\draw[rounded corners=20pt,blue] (4,1) rectangle (6,3) {};
			\draw[rounded corners=20pt,orange] (8,1) rectangle (10,3) {};
			\draw[rounded corners=20pt,cyan] (9,0) rectangle (12,9) {};
			\draw[rounded corners=20pt,magenta] (11,1) rectangle (13,3) {};
			\draw[rounded corners=20pt,brown] (15,1) rectangle (17,3) {};
			
			\draw (1,6) rectangle (15,8);
			\draw[violet,line width=5pt] (3,6)--(3,8);
			\draw[darkgray,line width=5pt] (1,6)--(15,6);
			\draw[red] (2,6.2) rectangle (5,7.9);
			\draw[blue] (4,6.2) rectangle (6,7.9);
			\draw[orange] (8,6.2) rectangle (10,7.9);
			
			\path (6.5,2) edge [dashed] (7.5,2);
			\path (6.5,7) edge [dashed] (7.5,7);
			\path (13.5,2) edge [dashed] (14.5,2);
			
			\draw[draw=black,fill=blue,-triangle 90, ultra thick] (6,3.7) -- (6,5.2);
			
			\node[label] (Pi) at (6.8,4.3) {$\Pi_{\wt{W}}$};
			\node[label,darkgray] (W) at (7,5.5) {$\wt{W}$};
			\node[label,violet] (Z) at (2.5,7) {$\wt{Z}$};
			\node[label,Green] (Y1) at (1.5,9.5) {$\wt{Y}_1$};
			\node[label,cyan] (Yj) at (10.5,9.5) {$\wt{Y}_j$};
			\node[label,red] (P2) at (3.5,8.5) {$\wt{P}_2$};
			\node[label,blue] (P3) at (5,8.5) {$\wt{P}_3$};
			\node[label,orange] (Pj-1) at (8.3,8.5) {$\wt{P}_{j-1}$};
			\node[label,red] (Y2) at (3.5,3.5) {$\wt{Y}_2$};
			\node[label,blue] (Y3) at (5,3.5) {$\wt{Y}_3$};
			\node[label,orange] (Yj-1) at (8.3,3.5) {$\wt{Y}_{j-1}$};
			\node[label,magenta] (Yj+1) at (12.7,3.5) {$\wt{Y}_{j+1}$};
			\node[label,brown] (Yn) at (16,3.5) {$\wt{Y}_n$};
			
			\node[fill] at (2.5,6.2){};
			\node[fill] at (4.5,6.2){};
			\node[fill] at (9.5,6.2){};
			\node[fill] at (2.5,2){};
			\node[fill] at (4.5,2){};
			\node[fill] at (9.5,2){};
			\node[fill] at (0.5,2){};
			\node[fill] at (11.5,2){};
			\node[fill] at (16.5,2){};
			
			\node[label] (x1) at (2.5,5.5){$\ti{x}_1$};
			\node[label] (x2) at (4.5,5.5){$\ti{x}_2$};
			\node[label] (xj-1) at (9.5,5.5){$\ti{x}_{j-1}$};
			
			\node[label] (y0) at (0.5,1.5){$\ti{y}_0$};
			\node[label] (y1) at (2.5,1.5){$\ti{y}_1$};
			\node[label] (y2) at (4.5,1.5){$\ti{y}_2$};
			\node[label] (yj-1) at (9.5,1.5){$\ti{y}_{j-1}$};
			\node[label] (yj) at (11.5,1.5){$\ti{y}_j$};
			\node[label] (yn) at (16.5,1.5){$\ti{y}_n$};

		\end{tikzpicture}
	}
	\caption{The routes $\wt{\fR}$ and $\wt{\fR}_{proj}$.}\label{fig:wtRproj}
\end{figure}

\begin{lem}\label{lem:fRproj}
The route $\wt{\fR}_{proj}(\wt{\fR},j)$ is the elevation of a finite closed route $\fR_{proj}=\fR_{proj}(\wt{\fR},j)$ in $X$.
Moreover, the sizes of the complexes in $\fR_{proj}$ are bounded in terms of $X$ (each is either a subcomplex of $X$, or isomorphic to a product of two subcomplexes of $X$).
\end{lem}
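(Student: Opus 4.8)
The plan is to descend $\wt{\fR}_{proj}$ to $X$ by quotienting each of its convex subcomplexes by a suitable subgroup of its $G$-stabiliser, and then to apply Lemma \ref{lem:elevuniv} to conclude that $\wt{\fR}_{proj}$ is an elevation of the resulting route $\fR_{proj}$. Recall that the complexes of $\wt{\fR}_{proj}$ are $\wt{Y}_1$, the complexes $\wt{P}_l=\phi(\wt{Z}\times\wt{W}_l)$ for $2\leq l\leq j-1$, and $\wt{Y}_j,\dots,\wt{Y}_n$. For $\wt{Y}_1$ and for each $\wt{Y}_i$ with $j\leq i\leq n$ I would quotient by the full stabiliser $G_{\wt{Y}_i}$, recovering the finite locally convex subcomplex $Y_i\subset X$ (these are elevations of the complexes of the embedded route $\fR$, and $X$ is finite). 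All the work lies in the complexes $\wt{P}_l$.

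First I would record the relevant structure. Since $Z\in\cP_j(\fR)$ is a finite locally convex subcomplex of $Y_1\subset X$ and $\wt{Z}$ is an elevation of it, $\wt{Z}/G_{\wt{Z}}\cong Z$. As $X$ is directly special, hence weakly special, Proposition \ref{prop:orthcomp} applies to $\wt{Z}$ and its orthogonal complement $\wt{W}$: it gives that $W:=\wt{W}/G_{\wt{W}}$ embeds as a finite locally convex subcomplex of $X$, that $G_{\wt{Z}}$ and $G_{\wt{W}}$ commute, and that both preserve $\phi(\wt{Z}\times\wt{W})$ with $G_{\wt{Z}}$ acting naturally on the $\wt{Z}$ factor and trivially on the $\wt{W}$ factor, and symmetrically for $G_{\wt{W}}$. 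Then, for each $l$, applying Proposition \ref{prop:projection} to the locally convex subcomplexes $W$ and $Y_l$ of $X$ (with elevations $\wt{W}$ and $\wt{Y}_l$) shows that $\wt{W}_l=\Pi_{\wt{W}}(\wt{Y}_l)$ descends to a finite locally convex subcomplex $W_l:=\wt{W}_l/G_{\wt{W}_l}$ of $X$, with $G_{\wt{W}_l}=G_{\wt{W}}\cap G_{\wt{Y}_l}$.

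Next I would analyse $K_l:=G_{\wt{Z}}\,G_{\wt{W}_l}\subset G$. Since $G_{\wt{Z}}$ acts trivially on the $\wt{W}$ factor of $\phi(\wt{Z}\times\wt{W})$ it stabilises the subcomplex $\wt{P}_l=\phi(\wt{Z}\times\wt{W}_l)$; since $G_{\wt{W}_l}\leq G_{\wt{W}}$ acts trivially on the $\wt{Z}$ factor and stabilises the convex subcomplex $\wt{W}_l\subset\wt{W}$, it too stabilises $\wt{P}_l$; the two subgroups commute and intersect trivially (any common element acts trivially on all of $\phi(\wt{Z}\times\wt{W})$, hence is trivial by freeness of the action $G\acts\wt{X}$). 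Thus $K_l\cong G_{\wt{Z}}\times G_{\wt{W}_l}$ is a subgroup of $G_{\wt{P}_l}$ which, with respect to the product structure $\wt{P}_l=\wt{Z}\times\wt{W}_l$, acts as the product of the two actions, so that $\wt{P}_l/K_l\cong (\wt{Z}/G_{\wt{Z}})\times(\wt{W}_l/G_{\wt{W}_l})=Z\times W_l$, a finite cube complex isomorphic to a product of two subcomplexes of $X$ with at most $|X|^2$ cubes. I would then apply Lemma \ref{lem:elevuniv} to $\wt{\fR}_{proj}$ with the subgroups $G_{\wt{Y}_1},K_2,\dots,K_{j-1},G_{\wt{Y}_j},\dots,G_{\wt{Y}_n}$: this produces a route $\fR_{proj}$ in $X$ with complexes $Y_1,Z\times W_2,\dots,Z\times W_{j-1},Y_j,\dots,Y_n$, all finite with size bounded in terms of $X$, and the converse half of that lemma gives that $\wt{\fR}_{proj}$ is an elevation of $\fR_{proj}$. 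Finally, $\fR_{proj}$ is closed since its initial and terminal vertices are the images in $X$ of $\ti{y}_0$ and $\ti{y}_n$, namely $y_0$ and $y_n=y_0$.

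I expect the main obstacle to be the bookkeeping around the product decompositions: one must check that $\wt{W}_l$ is convex in $\wt{W}$ and that $\wt{P}_l$ is convex in $\wt{X}$ (so that stabilisers and quotients behave as expected), and that the ``acts trivially on the other factor'' conclusions of Proposition \ref{prop:orthcomp} for $\phi(\wt{Z}\times\wt{W})$ restrict correctly to the sub-product $\wt{P}_l=\phi(\wt{Z}\times\wt{W}_l)$. These points are routine but need care to identify $\wt{P}_l/K_l$ with $Z\times W_l$ cleanly, and to verify that the quotient maps $\wt{P}_l/K_l\to X$ are local isometries so that $\fR_{proj}$ is a genuine route.
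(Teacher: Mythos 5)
Your proposal is correct and follows essentially the same approach as the paper: both use Propositions \ref{prop:orthcomp} and \ref{prop:projection} to identify the subgroup $G_{\wt{Z}}G_{\wt{W}_l}$ stabilising $\wt{P}_l$, observe that the quotient is $Z\times W_l$, and descend $\wt{\fR}_{proj}$ to a route in $X$ built from these quotients. The paper simply writes down $\fR_{proj}$ with these quotients and asserts the result, whereas you make the elevation claim slightly more explicit via Lemma \ref{lem:elevuniv}; the technical points you flag (convexity of $\wt{P}_l$, the quotient map being a local isometry) are indeed routine consequences of the product structure and the freeness of the deck action.
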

\begin{proof}
Firstly, it follows from Proposition \ref{prop:orthcomp}\ref{item:PApreserved} that $G_{\wt{Z}}$ and $G_{\wt{W}}$ preserve $\phi(\wt{Z}\times\wt{W})$, moreover $G_{\wt{Z}}$ (resp. $G_{\wt{W}}$) acts on the $\wt{Z}$ factor (resp. $\wt{W}$ factor) in the natural way and it acts trivially on the other factor.
Secondly, Proposition \ref{prop:orthcomp}\ref{item:Bembed} tells us that $\wt{W}/G_{\wt{W}}$ embeds in $X$ (remember that direct specialness is a stronger property than weak specialness). 
Thirdly, Proposition \ref{prop:projection} implies that $G_{\wt{W}_l}=G_{\wt{W}}\cap G_{\wt{Y}_l}$ (for $2\leq l<j$).
Hence $G_{\wt{Z}}$ and $G_{\wt{W}_l}$ both preserve $\wt{P}_l=\phi(\wt{Z}\times\wt{W}_l)$.
Moreover, Proposition \ref{prop:projection} implies that $\wt{W}_l/G_{\wt{W}_l}$ embeds in $X$, so we have that 
\begin{equation}\label{Plquotient}
	\wt{P}_l/G_{\wt{Z}}G_{\wt{W}_l}\cong \wt{Z}/G_{\wt{Z}}\times \wt{W}_l/G_{\wt{W}_l}\cong Z\times  \wt{W}_l/G_{\wt{W}_l}
\end{equation}
is a finite complex.
Now put
\begin{equation}
	\fR_{proj}(\wt{\fR},j)=(y_0,Y_1,x_1,\wt{P}_2/G_{\wt{Z}}G_{\wt{W}_2},x_2,\dots,\wt{P}_{j-1}/G_{\wt{Z}}G_{\wt{W}_{j-1}},x_{j-1},Y_j,y_j,\dots,Y_n,y_n).
\end{equation}
The vertices $x_l$ are the images of $\ti{x}_l$ in $X$, and the maps $\wt{P}_l/G_{\wt{Z}}G_{\wt{W}_l}\to\wt{X}/G=X$ are the natural quotient maps.
Clearly, $\fR_{proj}(\wt{\fR},j)$ is a finite closed route in $X$.
Moreover, (\ref{Plquotient}) implies that each complex in $\fR_{proj}(\wt{\fR},j)$ is either a subcomplex of $X$, or isomorphic to a product of two subcomplexes of $X$.
\end{proof}

\begin{lem}\label{lem:3propfRproj}
	The route $\fR_{proj}=\fR_{proj}(\wt{\fR},j)$ satisfies the following properties:
	\begin{enumerate}
		\item\label{item:isessential} $\fR_{proj}$ is essential.
		\item\label{item:Hypj'} $\fR_{proj}$ satisfies $\mathrm{(Hyp}_j\mathrm{)}$.
		\item\label{item:Hypl'} If $\fR$ (from Definition \ref{wtfR'proj}) satisfies $\mathrm{(Hyp}_l\mathrm{)}$ for some $2\leq l<j$, then $\fR_{proj}$ also satisfies $\mathrm{(Hyp}_l\mathrm{)}$.
	\end{enumerate}
\end{lem}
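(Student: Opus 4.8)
The plan is to pass everything to the universal cover $\wt X$ and analyse all elevations of $\fR_{proj}$ at once. Recall that by Lemma~\ref{lem:routeelev}\ref{item:essential} a closed route is essential iff it has no closed elevation to $\wt X$, and that each property $\mathrm{(Hyp}_m\mathrm{)}$ is by definition quantified over all elevations to $\wt X$; so in all three parts it suffices to control these elevations. I would do this via Lemma~\ref{lem:elevuniv}: applying it with $\wt{\fR}_{proj}$ as the route in $\wt X$ and with $K_1=G_{\wt Y_1}$, $K_l=G_{\wt Z}G_{\wt W_l}$ for $2\le l\le j-1$ (a subgroup since $G_{\wt Z}$ and $G_{\wt W_l}$ commute, by Proposition~\ref{prop:orthcomp}\ref{item:ABcommute} and $G_{\wt W_l}=G_{\wt W}\cap G_{\wt Y_l}$ from Proposition~\ref{prop:projection}), and $K_l=G_{\wt Y_l}$ for $j\le l\le n$, the resulting quotient route is precisely $\fR_{proj}$. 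Hence, after translating by $G$, every elevation of $\fR_{proj}$ has the shape $\wt{\fR}'_{proj}=(\ti y_0,\wt Y_1,g_1\ti x_1,g_1\wt P_2,\dots,g_{j-1}\ti x_{j-1},g_{j-1}\wt Y_j,g_j\ti y_j,\dots,g_{n-1}\wt Y_n,g_n\ti y_n)$ with $g_m=k_1\cdots k_m$, $k_l\in K_l$; writing $k_l=a_lb_l$ with $a_l\in G_{\wt Z}$, $b_l\in G_{\wt W_l}$ for $2\le l\le j-1$ and using that $G_{\wt Z}$ commutes with $G_{\wt W}$, I record $g_m=h_mc_m$ with $h_m:=k_1a_2\cdots a_m\in G_{\wt Y_1}$ (as $G_{\wt Z}=G_{\wt Y_1}\cap G_{\wt Y_j}\subseteq G_{\wt Y_1}$) and $c_m:=b_2\cdots b_m\in G_{\wt W}$. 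The two standing facts I would set up first are: $\hH(\wt P_l)=\hH(\wt Z)\sqcup\hH(\wt W_l)$ (hyperplanes of the product $\wt P_l=\phi(\wt Z\times\wt W_l)$); and $G_{\wt W}$ stabilises $\hH(\wt Z)$ and $G_{\wt Z}$ stabilises $\hH(\wt W)$ (from Proposition~\ref{prop:orthcomp}\ref{item:PApreserved}, since $G_{\wt W}$ preserves $\wt P_{\wt Z}$ acting trivially on the $\wt Z$-factor, so it permutes the $\wt Z$-type hyperplanes of $\wt P_{\wt Z}$, which are exactly the members of $\hH(\wt Z)$); combined with $G_{\wt Z}$ preserving each $\wt P_l$ (proof of Lemma~\ref{lem:fRproj}) and the disjointness $\hH(\wt Z)\cap\hH(\wt W_l)=\emptyset$, this gives that $G_{\wt Z}$ also stabilises each $\hH(\wt W_l)$. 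So any product of elements of $G_{\wt Z}$ and $G_{\wt W}$ fixes $\hH(\wt Z)$, $\hH(\wt W)$, and every $\hH(\wt W_l)$ setwise.

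For \ref{item:isessential}: suppose $\wt{\fR}'_{proj}$ as above is closed, i.e.\ $g_n\ti y_n=\ti y_0$. Moving all the $a_l$ to the left, $g_n=h'_1\,b_2\cdots b_{j-1}\,k_j\cdots k_n$ with $h'_1:=k_1a_2\cdots a_{j-1}\in G_{\wt Y_1}$; since $b_l\in G_{\wt W_l}\subseteq G_{\wt Y_l}$ and $k_l\in G_{\wt Y_l}$ for $l\ge j$, this exhibits $g_n$ as a product $h'_1h'_2\cdots h'_n$ with $h'_l\in G_{\wt Y_l}$ for all $l$. By Lemma~\ref{lem:elevuniv} applied to $\wt{\fR}$ with $K_l=G_{\wt Y_l}$ (for which the quotient route is just $\fR$), the route with complexes $\wt Y_1,\,h'_1\wt Y_2,\,h'_1h'_2\wt Y_3,\dots$ is an elevation of $\fR$, and it is closed since its terminal vertex is $(h'_1\cdots h'_n)\ti y_n=g_n\ti y_n=\ti y_0$. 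This contradicts the essentialness of $\fR$.

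For \ref{item:Hypj'}: I must show $\hH(\wt Y_1)\cap\hH(g_{j-1}\wt Y_j)\subseteq\hH(g_{l-1}\wt P_l)$ for $2\le l\le j-1$; since $\hH(g_{l-1}\wt P_l)\supseteq g_{l-1}\hH(\wt Z)=k_1\hH(\wt Z)$ (the $a$'s and $b$'s in $g_{l-1}$ fix $\hH(\wt Z)$), it suffices to show $\hH(\wt Y_1)\cap\hH(g_{j-1}\wt Y_j)=k_1\hH(\wt Z)$. Writing $g_{j-1}=k_1AB$ with $A:=a_2\cdots a_{j-1}\in G_{\wt Z}\subseteq G_{\wt Y_1}$ and $B:=b_2\cdots b_{j-1}\in G_{\wt W}$, factoring out $k_1A$ and applying Theorem~\ref{thm:bridge} gives $\hH(\wt Y_1)\cap\hH(g_{j-1}\wt Y_j)=k_1A\,\hH(\wt Z'')$ with $\wt Z'':=\Pi_{\wt Y_1}(B\wt Y_j)$. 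Now $\wt Y_1$ and $B\wt Y_j$ are the first and $j$-th complexes of an elevation of $\fR$ (take $h_1=1$, $h_l=b_l$ in Lemma~\ref{lem:elevuniv}), so $\wt Z''$ is an elevation of some $Z''\in\cP_j(\fR)$; since $B\in G_{\wt W}$ fixes $\hH(\wt Z)\subseteq\hH(\wt Y_j)$, we get $\hH(\wt Z)\subseteq\hH(\wt Z'')$, i.e.\ $Z\le Z''$. As $[Z]_j$ is $\le$-maximal this forces $Z\sim Z''$, hence $\hH(\wt Z)=\hH(\wt Z'')$ by Lemma~\ref{lem:Z1Z2Z1}, so $\hH(\wt Y_1)\cap\hH(g_{j-1}\wt Y_j)=k_1A\hH(\wt Z)=k_1\hH(\wt Z)$ as required. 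This is the one place where the $\le$-maximality of $[Z]_j$ — and so the complexity $\kappa_j$ — is genuinely used.

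For \ref{item:Hypl'}: assume $\fR$ satisfies $\mathrm{(Hyp}_l\mathrm{)}$, fix $2\le i<l\le j-1$, and take $\wt H\in\hH(\wt Y_1)\cap\hH(g_{l-1}\wt P_l)$; I must show $\wt H\in\hH(g_{i-1}\wt P_i)$. Splitting $\hH(g_{l-1}\wt P_l)=g_{l-1}\hH(\wt Z)\sqcup g_{l-1}\hH(\wt W_l)$, the first piece equals $g_{i-1}\hH(\wt Z)\subseteq\hH(g_{i-1}\wt P_i)$, so assume $\wt H\in g_{l-1}\hH(\wt W_l)$. On one hand, $\wt Y_1$ and $g_{l-1}\wt Y_l$ are the first and $l$-th complexes of an elevation $\wt{\fR}^*$ of $\fR$ (in Lemma~\ref{lem:elevuniv} put all the $a$'s into $h_1$ and $h_m=b_m$ for $2\le m\le l-1$, so $h_1\cdots h_{l-1}=g_{l-1}$); since $\hH(\wt W_l)\subseteq\hH(\wt Y_l)$, applying $\mathrm{(Hyp}_l\mathrm{)}$ to $\wt{\fR}^*$ gives $\wt H\in\hH(\wt Y_1)\cap\hH(g_{l-1}\wt Y_l)\subseteq\hH(\wt Y_i^*)=g^*_{i-1}\hH(\wt Y_i)$, where $g^*_{i-1}=g_{i-1}(a_i\cdots a_{l-1})$. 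On the other hand, $\wt H\in g_{l-1}\hH(\wt W)=k_1\hH(\wt W)$, so tracking through the $a$'s and $b$'s gives $g_{i-1}^{-1}\wt H\in\hH(\wt W)$. Combining, $(a_i\cdots a_{l-1})^{-1}g_{i-1}^{-1}\wt H\in\hH(\wt Y_i)\cap\hH(\wt W)=\hH(\wt W_i)$, and since $a_i\cdots a_{l-1}\in G_{\wt Z}$ fixes $\hH(\wt W_i)$ we get $g_{i-1}^{-1}\wt H\in\hH(\wt W_i)\subseteq\hH(\wt P_i)$, i.e.\ $\wt H\in\hH(g_{i-1}\wt P_i)$. (The reverse inclusions in each $\mathrm{(Hyp}_m\mathrm{)}$ are automatic, and the translated elevations $g_0\ne1$ are handled by applying $g_0$ throughout.)

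The main obstacle is not conceptual but organisational: carrying the three families of group elements ($k_1$, the $a_l\in G_{\wt Z}$, the $b_l\in G_{\wt W_l}$) through the projection and product formulas, which is made to work by the commuting and hyperplane-stabilising properties assembled in the first paragraph, together with the $\le$-maximality input in part \ref{item:Hypj'}.
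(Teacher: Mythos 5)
Your proposal is correct and follows essentially the same approach as the paper: parametrise elevations of $\fR_{proj}$ via Lemma \ref{lem:elevuniv} with $K_l=G_{\wt Z}G_{\wt W_l}$, split $k_l=a_lb_l$, and use the commuting/hyperplane-stabilising facts from Propositions \ref{prop:projection} and \ref{prop:orthcomp} to shuttle the $a_l$'s past the $b_l$'s, with $\le$-maximality of $[Z]_j$ (via Lemma \ref{lem:Z1Z2Z1}, where the paper uses the equivalent Lemma \ref{lem:leqmaximal}) doing the work in part \ref{item:Hypj'}. The only cosmetic difference is that the paper fixes a single modified elevation $\wt{\fR}'$ with $k_1^*=k_1a_2\cdots a_{j-1}$ and reuses it for all three parts, whereas in part \ref{item:Hypl'} you build a more local elevation with $h_1=k_1a_2\cdots a_{l-1}$; both choices are fine and lead to the same conclusion.
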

\begin{proof}
By Lemma \ref{lem:elevuniv}, every elevation of $\fR_{proj}$ to $\wt{X}$ takes the form
\begin{equation}\label{wtfR'proj}
	\wt{\fR}'_{proj}=(g_0\ti{y}_0,g_0\wt{Y}_1,g_1\ti{x}_1,g_1\wt{P}_2,g_2\ti{x}_2,\dots,g_{j-2}\wt{P}_{j-1},g_{j-1}\ti{x}_{j-1},g_{j-1}\wt{Y}_j,g_j\ti{y}_j,\dots,g_{n-1}\wt{Y}_n,g_n\ti{y}_n),
\end{equation}
where $g_0\in G$, and $g_i=g_0k_1k_2\cdots k_i$ ($1\leq i\leq n$), with 
\begin{equation}
	k_1\in G_{\wt{Y}_1}, k_2\in G_{\wt{Z}}G_{\wt{W}_2},\dots, k_{j-1}\in G_{\wt{Z}}G_{\wt{W}_{j-1}}, k_j\in G_{\wt{Y}_j},\dots, k_n\in G_{\wt{Y}_n}.
\end{equation}
For $2\leq l<j$, let $k_l=a_lk_l^*$ with $a_l\in G_{\wt{Z}}$ and $k_l^*\in G_{\wt{W}_l}$.
Define $k_1^*=k_1a_2a_3\cdots a_{j-1}$. As $G_{\wt{Z}}=G_{\wt{Y}_1}\cap G_{\wt{Y}_j}$ (Proposition \ref{prop:projection}), we have $k_1^*\in  G_{\wt{Y}_1}$.
Similarly, $G_{\wt{W}_l}=G_{\wt{W}}\cap G_{\wt{Y}_l}$, so $k_l^*\in G_{\wt{Y}_l}$ ($2\leq l<j$).
Putting $k_i^*=k_i$ for $j\leq i\leq n$, and $g_i^*=g_0k_1^*k_2^*\cdots k_i^*$ for $1\leq i\leq n$, we can then apply Lemma \ref{lem:elevuniv} again to build an elevation $\wt{\fR}'$ of $\fR$ to $\wt{X}$ defined by
\begin{equation}\label{wtR'}
	\wt{\fR}'=(g_0\ti{y}_0,g_0\wt{Y}_1,g_1^*\ti{y}_1,g_1^*\wt{Y}_2,g_2^*\ti{y}_2,\dots,g_{n-1}^*\wt{Y}_n,g_n^*\ti{y}_n).
\end{equation}

We now verify properties \ref{item:isessential}--\ref{item:Hypl'} of the lemma:
\begin{enumerate}
	\item By Proposition \ref{prop:orthcomp}\ref{item:ABcommute}, $G_{\wt{Z}}$ commutes with $G_{\wt{W}_2},\dots, G_{\wt{W}_{j-1}}$, so
	\begin{align}\label{gngn*}
		g_n^*&=g_0k_1^*k_2^*\cdots k_n^*\\\nonumber
		&=g_0k_1a_2a_3\cdots a_{j-1}k_2^*\cdots k_{j-1}^* k_j\cdots k_n\\\nonumber
		&=g_0k_1(a_2k_2^*)\cdots(a_{j-1}k_{j-1}^*)k_j\cdots k_n\\\nonumber
		&=g_0k_1k_2\cdots k_n\\\nonumber
		&=g_n.\nonumber
	\end{align}
	Lemma \ref{lem:routeelev}\ref{item:essential} says that a route in $X$ is essential if and only if it has no closed elevations to $\wt{X}$.
	The route $\fR$ is essential by assumption, so $\wt{\fR}'$ is not closed, but the routes $\wt{\fR}'$ and $\wt{\fR}'_{proj}$ have the same terminal and initial vertices by (\ref{gngn*}), thus $\wt{\fR}'_{proj}$ is not closed either. Since $\wt{\fR}'_{proj}$ represents an arbitrary elevation of $\fR_{proj}$ to $\wt{X}$, we conclude that $\fR_{proj}$ is essential.
	
	\item Again consider the elevation $\wt{\fR}'_{proj}$ of $\fR_{proj}$ given in (\ref{wtfR'proj}).
	Our task is to show that
	\begin{equation}\label{Hypj}
		\hH(g_0\wt{Y}_1)\cap\hH(g_{j-1}\wt{Y}_j)\subset\bigcap_{l=2}^{j-1}\hH(g_{l-1}\wt{P}_l).
	\end{equation} 
	The elements $k_2,\dots,k_{j-1}$ are all in $G_{\wt{Z}}G_{\wt{W}}$, so they all preserve $\phi(\wt{Z}\times\wt{W})$ without interchanging the two factors in the product (Proposition \ref{prop:orthcomp}), hence they all stabilize the collection of hyperplanes $\hH(\wt{Z})=\hH(\wt{Y}_1)\cap\hH(\wt{Y}_j)$ (this last equality is due to Theorem \ref{thm:bridge}).
	As a result,
		\begin{equation*}
		\hH(\wt{Z})=\hH(\wt{Y}_1)\cap\hH(\wt{Y}_j)=\hH(k_2k_3\cdots k_{j-1}\wt{Y}_1)\cap\hH(k_2k_3\cdots k_{j-1}\wt{Y}_j).
	\end{equation*}
	Thus
	\begin{equation*}
		\hH(\wt{Z})\subset\hH(\wt{Y}_1)\cap\hH(k_2k_3\cdots k_{j-1}\wt{Y}_j)=\hH(\Pi_{\wt{Y}_1}(k_2k_3\cdots k_{j-1}\wt{Y}_j)),
	\end{equation*}
	with the equality again due to Theorem \ref{thm:bridge}.
	Left multiplying by $g_0k_1$, we get
	\begin{equation}\label{hypinc}
		\hH(g_1\wt{Z})\subset\hH(g_0\wt{Y}_1)\cap\hH(g_{j-1}\wt{Y}_j)=\hH(\Pi_{g_0\wt{Y}_1}(g_{j-1}\wt{Y}_j)).
	\end{equation}
	A similar computation to (\ref{gngn*}) shows that $g_{j-1}=g_{j-1}^*$, so $g_0\wt{Y}_1$ and $g_{j-1}\wt{Y}_j$ are the 1st and $j$-th subcomplexes in the route $\wt{\fR}'$. Since $\wt{\fR}'$ is an elevation of $\fR$, we deduce from Remark \ref{remk:iselevation} that $\Pi_{g_0\wt{Y}_1}(g_{j-1}\wt{Y}_j)$ is an elevation of some complex in $\cP_j(\fR)$.
	But we assumed $[Z]_j$ is $\leq$-maximal, so Lemma \ref{lem:leqmaximal} implies that the inclusion in (\ref{hypinc}) is actually an equality.
	Since $k_2,\dots,k_{j-1}$ stabilize $\hH(\wt{Z})$ and $\hH(\wt{Z})\subset\hH(\wt{P}_l)$ for $2\leq l\leq j-1$, it follows that
	\begin{equation*}
		\hH(g_0\wt{Y}_1)\cap\hH(g_{j-1}\wt{Y}_j)=\hH(g_1\wt{Z})\subset\bigcap_{l=2}^{j-1}\hH(g_1k_2k_3\cdots k_{l-1}\wt{P}_l)=\bigcap_{l=2}^{j-1}\hH(g_{l-1}\wt{P}_l).
	\end{equation*}
	
	\item We now suppose there is $2\leq l<j$ such that $\fR$ satisfies (Hyp$_l$). Our task is to show that $\fR_{proj}$ also satisfies (Hyp$_l$).
	Again consider the elevation $\wt{\fR}'_{proj}$ of $\fR_{proj}$ given in (\ref{wtfR'proj}).
	We must show that
	\begin{equation}\label{Hypl}
		\hH(g_0\wt{Y}_1)\cap\hH(g_{l-1}\wt{P}_l)\subset\bigcap_{i=2}^{l}\hH(g_{i-1}\wt{P}_i).
	\end{equation} 
	For $2\leq i\leq l$, we can make a similar computation to (\ref{gngn*})
	\begin{align}\label{gi-1*gi-1}
		g_{i-1}^*&=g_0k_1^*k_2^*\cdots k_{i-1}^*\\\nonumber
		&=g_0k_1a_2a_3\cdots a_{j-1}k_2^*\cdots k_{i-1}^*\\\nonumber
		&=g_0k_1(a_2k_2^*)\cdots (a_{i-1}k_{i-1}^*)a_i\cdots a_{j-1}\\\nonumber
		&=g_0k_1k_2\cdots k_{i-1}a_i\cdots a_{j-1}\\\nonumber
		&=g_{i-1}a_i\cdots a_{j-1}.\\\nonumber
	\end{align}
	As $a_i\cdots a_{j-1}\in G_{\wt{Z}}$ stabilize $\phi(\wt{Z}\times\wt{W})$, acting trivially on the $\wt{W}$ factor, we deduce that $a_i\cdots a_{j-1}$ stabilize the sets of hyperplanes $\hH(\wt{W})$ and $\hH(\wt{W}_i)$. 
	Combined with (\ref{gi-1*gi-1}), we get
	\begin{equation}\label{hHWi}
		\hH(g_{i-1}^*\wt{W})=\hH(g_{i-1}\wt{W})\quad\text{and}\quad\hH(g_{i-1}^*\wt{W}_i)=\hH(g_{i-1}\wt{W}_i).
	\end{equation}
	Furthermore, $g_{i-1}=g_1k_2\cdots k_{i-1}$, and $k_2\cdots k_{i-1}$ also stabilize $\hH(\wt{W})$, so we additionally have
	\begin{equation}\label{hHW}
		\hH(g_{i-1}^*\wt{W})=\hH(g_{i-1}\wt{W})=\hH(g_1\wt{W}).
	\end{equation}
	As $\fR$ satisfies (Hyp$_l$) and $\wt{\fR}'$ (given in (\ref{wtR'})) is an elevation of $\fR$ to $\wt{X}$, we have
	\begin{equation}\label{HyplforR}
		\hH(g_0\wt{Y}_1)\cap\hH(g_{l-1}^*\wt{Y}_l)\subset\bigcap_{i=2}^{l}\hH(g_{i-1}^*\wt{Y}_i).
	\end{equation}
	Using (\ref{hHWi}), (\ref{hHW}), (\ref{HyplforR}) and the fact that $\hH(\wt{W}_i)=\hH(\wt{W})\cap\hH(\wt{Y}_i)$ (Theorem \ref{thm:bridge}), we deduce
	\begin{align}\label{bigcaphHWi}
		\hH(g_0\wt{Y}_1)\cap\hH(g_{l-1}\wt{W}_l)&=\hH(g_0\wt{Y}_1)\cap\hH(g_{l-1}^*\wt{W}_l)\\\nonumber
		&=\hH(g_0\wt{Y}_1)\cap\hH(g_{l-1}^*\wt{W})\cap\hH(g_{l-1}^*\wt{Y}_l)\\\nonumber
		&=\hH(g_0\wt{Y}_1)\cap\hH(g_1\wt{W})\cap\hH(g_{l-1}^*\wt{Y}_l)\\\nonumber
		&\subset\bigcap_{i=2}^l(\hH(g_{i-1}^*\wt{W})\cap\hH(g_{i-1}^*\wt{Y}_i))\\\nonumber
		&=\bigcap_{i=2}^l\hH(g_{i-1}^*\wt{W}_i)\\\nonumber
		&=\bigcap_{i=2}^l\hH(g_{i-1}\wt{W}_i).\\\nonumber
	\end{align}
	For $2\leq i\leq l$, we know that $\wt{P}_i=\phi(\wt{Z}\times\wt{W}_i)$, so $\hH(\wt{P}_i)=\hH(\wt{Z})\cup\hH(\wt{W}_i)$.
	And $k_i$ stabilizes $\wt{P}_i=\phi(\wt{Z}\times\wt{W}_i)$ without swapping the factors, so it stabilizes $\hH(\wt{Z})$. Thus $\hH(g_{i-1}\wt{Z})=g_1k_2\cdots k_{i-1}\hH(\wt{Z})=g_1\hH(\wt{Z})$.
	Finally, combining these two facts with (\ref{bigcaphHWi}) yields
	\begin{align*}
		\hH(g_0\wt{Y}_1)\cap\hH(g_{l-1}\wt{P}_l)&\subset (\hH(g_0\wt{Y}_1)\cap\hH(g_{l-1}\wt{W}_l))\cup \hH(g_{l-1}\wt{Z})\\\nonumber
		&=(\hH(g_0\wt{Y}_1)\cap\hH(g_{l-1}\wt{W}_l))\cup g_1\hH(\wt{Z})\\\nonumber
		&\subset \left(\bigcap_{i=2}^l\hH(g_{i-1}\wt{W}_i)\right)\cup g_1\hH(\wt{Z})\\\nonumber
		&=\bigcap_{i=2}^l(\hH(g_{i-1}\wt{W}_i)\cup \hH(g_{i-1}\wt{Z}))\\\nonumber
		&=\bigcap_{i=2}^l\hH(g_{i-1}\wt{P}_i).\qedhere
	\end{align*}
\end{enumerate}
\end{proof}

Putting together Lemmas \ref{lem:fRproj} and \ref{lem:3propfRproj} yields the following.

\begin{lem}\label{lem:Omega}
Let $\fR=(y_0,Y_1,y_1,Y_2,y_2,\dots,Y_n,y_n)$ be an essential embedded closed route in a finite directly special cube complex $X$, with $n\geq4$. Fix $2\leq j\leq n-1$.
There exists a finite collection $\Omega=\Omega(\fR,j)$ of finite essential closed routes (not necessarily embedded) in $X$ of length $n$ such that:
\begin{enumerate}
	\item\label{item:Hypj} Each route in $\Omega$ satisfies $\mathrm{(Hyp}_j\mathrm{)}$.
	\item\label{item:Hypl} If $\fR$ satisfies $\mathrm{(Hyp}_l\mathrm{)}$ for some $2\leq l<j$, then the routes in $\Omega$ also satisfy $\mathrm{(Hyp}_l\mathrm{)}$.
	\item\label{item:matchOmega} If an elevation $\wt{\fR}$ of $\fR$ to the universal cover $\wt{X}\to X$ has $\Pi_{\wt{Y}_1}(\wt{Y}_j)$ as an elevation of some $Z\in\cP_j(\fR)$ with $[Z]_j$ $\leq$-maximal, then $\wt{\fR}$ has the same initial and terminal vertices as some elevation of a route in $\Omega$.
\end{enumerate} 
\end{lem}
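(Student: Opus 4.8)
The plan is to let $\Omega$ consist of representatives, up to isomorphism of routes, of the routes $\fR_{proj}(\wt{\fR},j)$ furnished by Definition~\ref{defn:wtfRproj}; Lemmas~\ref{lem:fRproj} and~\ref{lem:3propfRproj} then deliver all three conclusions once the finiteness of this collection is established. First I would dispose of the degenerate value $j=2$: here $\mathrm{(Hyp}_2\mathrm{)}$ reads $\hH(\wt{Y}_1)\cap\hH(\wt{Y}_2)=\bigcap_{i=1}^{2}\hH(\wt{Y}_i)$, a tautology, so every route in $X$ satisfies $\mathrm{(Hyp}_2\mathrm{)}$, and one may simply take $\Omega=\{\fR\}$. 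Then conclusion~\ref{item:Hypj} is the tautology just noted, conclusion~\ref{item:Hypl} is vacuous, and conclusion~\ref{item:matchOmega} holds because $\wt{\fR}$ is an elevation of $\fR\in\Omega$ with its own initial and terminal vertices.

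Now suppose $3\le j\le n-1$, so Definition~\ref{defn:wtfRproj} applies. Let $S$ be the set of all routes of the form $\fR_{proj}(\wt{\fR},j)$, where $\wt{\fR}$ ranges over the elevations of $\fR$ to $\wt{X}$ for which $\Pi_{\wt{Y}_1}(\wt{Y}_j)$ is an elevation of some $Z\in\cP_j(\fR)$ with $[Z]_j$ $\leq$-maximal. By Lemma~\ref{lem:fRproj}, each member of $S$ is a finite closed route of length $n$ whose complexes are each a subcomplex of $X$ or a product of two subcomplexes of $X$, of size bounded in terms of the finite complex $X$; as there are only finitely many local isometries from such a complex to $X$ and finitely many choices of basepoints, $S$ meets only finitely many isomorphism classes of routes. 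Let $\Omega\subset S$ be a finite set of representatives for these classes. Being finite, closed, essential, of length $n$, and satisfying $\mathrm{(Hyp}_j\mathrm{)}$ or $\mathrm{(Hyp}_l\mathrm{)}$ are all invariant under isomorphism of routes, because an isomorphism of routes carries the elevations to $\wt{X}$ (components of the relevant pullbacks) bijectively to each other, preserving their hyperplane collections. Hence, using Lemma~\ref{lem:3propfRproj}\ref{item:isessential}, \ref{item:Hypj'} and~\ref{item:Hypl'}, every route in $\Omega$ is a finite essential closed route of length $n$ satisfying $\mathrm{(Hyp}_j\mathrm{)}$, and satisfying $\mathrm{(Hyp}_l\mathrm{)}$ whenever $\fR$ does ($2\le l<j$); this gives conclusions~\ref{item:Hypj} and~\ref{item:Hypl}.

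For conclusion~\ref{item:matchOmega}, take an elevation $\wt{\fR}$ of $\fR$ with $\Pi_{\wt{Y}_1}(\wt{Y}_j)$ an elevation of a $\leq$-maximal $Z\in\cP_j(\fR)$, and form the route $\wt{\fR}_{proj}=\wt{\fR}_{proj}(\wt{\fR},j)$ of~(\ref{wtfRproj}). By Lemma~\ref{lem:fRproj} it is an elevation of $\fR_{proj}(\wt{\fR},j)\in S$, which is isomorphic to some $\fR'\in\Omega$; since isomorphic routes have the same elevations to $\wt{X}$, the route $\wt{\fR}_{proj}$ is also an elevation of $\fR'$. By inspection of~(\ref{wtfRproj}), its initial and terminal vertices are $\ti{y}_0$ and $\ti{y}_n$, which are exactly the initial and terminal vertices of $\wt{\fR}$, as required.

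The step I expect to be the main obstacle is the finiteness of $\Omega$: the elevations $\wt{\fR}$, and the orthogonal complements $\wt{W}$ used to build $\fR_{proj}(\wt{\fR},j)$, form an infinite family, and it is precisely the boundedness clause of Lemma~\ref{lem:fRproj} that forces the routes $\fR_{proj}(\wt{\fR},j)$ into finitely many isomorphism types. The rest is routine bookkeeping, chiefly the verification that $\mathrm{(Hyp}_j\mathrm{)}$, $\mathrm{(Hyp}_l\mathrm{)}$ and the initial and terminal vertices are unaffected by passing to isomorphism representatives.
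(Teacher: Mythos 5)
Your proof is correct and follows essentially the same strategy as the paper: the degenerate case $j=2$ is handled by $\Omega=\{\fR\}$, and for $j\geq3$ you take $\Omega$ to consist of the routes $\fR_{proj}(\wt{\fR},j)$ from Definition~\ref{defn:wtfRproj}, invoking Lemmas~\ref{lem:fRproj} and~\ref{lem:3propfRproj} for the three conclusions. The only difference is that you replace $S$ by a finite set of isomorphism-class representatives and explicitly check that essentialness, the properties $\mathrm{(Hyp}_l\mathrm{)}$, and elevations are invariant under isomorphism of routes; the paper skips this and simply asserts finiteness from the boundedness clause of Lemma~\ref{lem:fRproj}, implicitly identifying isomorphic routes. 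Your version is a bit more pedantic but closes a small gap in rigor.
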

\begin{proof}
	If $j=2$ then we can just take $\Omega=\{\fR\}$. Property \ref{item:Hypj} holds because any route in $X$ trivially satisfies (Hyp$_2$). Properties \ref{item:Hypl} and \ref{item:matchOmega} also hold for trivial reasons. 
	
	Now assume $j\geq3$. We take $\Omega=\Omega(\fR,j)$ to be the collection of all routes $\fR_{proj}(\wt{\fR},j)$ (as constructed in Lemma \ref{lem:fRproj}), where $\wt{\fR}$ ranges over all elevations $\wt{\fR}$ of $\fR$ to $\wt{X}$ such that $\Pi_{\wt{Y}_1}(\wt{Y}_j)$ is an elevation of a subcomplex $Z\in\cP_j(\fR)$ with $[Z]_j$ $\leq$-maximal (as in Definition \ref{defn:wtfRproj}).
	For such a route $\wt{\fR}$, we know that $\wt{\fR}_{proj}(\wt{\fR},j)$ has the same initial and terminal vertices as $\wt{\fR}$ (see (\ref{wtfRproj})), and $\wt{\fR}_{proj}(\wt{\fR},j)$ is an elevation of $\fR_{proj}(\wt{\fR},j)$ (Lemma \ref{lem:fRproj}), so part \ref{item:matchOmega} is satisfied. Parts \ref{item:Hypj} and \ref{item:Hypl} (and essentialness of the routes in $\Omega$) follow from Lemma \ref{lem:3propfRproj}.
	Finally, the finiteness of $\Omega$ follows from the finiteness of $X$ and the second part of Lemma \ref{lem:fRproj}.	
\end{proof}

As promised, we can now reduce Theorem \ref{thm:embroutes} to the case where the route $\fR$ satisfies property (Hyp$_j$) for all $2\leq j\leq n-1$.

\begin{lem}\label{lem:inductHyp}
	Fix $n\geq4$.
	Suppose Theorem \ref{thm:embroutes} holds for routes of length $n$ that satisfy
	property (Hyp$_j$) for all $2\leq j\leq n-1$.
	Then Theorem \ref{thm:embroutes} holds for all routes of length $n$.
\end{lem}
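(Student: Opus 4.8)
The plan is to prove, by \emph{reverse} induction on $j$, the statement $P(j)$: Theorem~\ref{thm:embroutes} holds for every essential embedded closed route of length $n$ (in an arbitrary finite directly special cube complex) that satisfies $\mathrm{(Hyp}_l\mathrm{)}$ for all $2\le l\le j$. The base case $P(n-1)$ is precisely the hypothesis of the lemma, while $P(1)$ is the desired conclusion, since the range $2\le l\le 1$ is empty. So it suffices to deduce $P(j-1)$ from $P(j)$ for each $2\le j\le n-1$.

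Fix such a $j$, assume $P(j)$, and let $\fR=(y_0,Y_1,y_1,\dots,Y_n,y_n)$ be an essential embedded closed route of length $n$ in a finite directly special cube complex $X$ satisfying $\mathrm{(Hyp}_l\mathrm{)}$ for all $2\le l\le j-1$. I run a nested induction on the complexity $\kappa_j(\fR)$ (Definition~\ref{defn:kappaj}). Apply Lemma~\ref{lem:Omega} to obtain the finite collection $\Omega=\Omega(\fR,j)$ of essential closed routes of length $n$ in $X$; by parts~\ref{item:Hypj} and~\ref{item:Hypl} of that lemma, and since $\fR$ satisfies $\mathrm{(Hyp}_l\mathrm{)}$ for all $2\le l<j$, every route in $\Omega$ satisfies $\mathrm{(Hyp}_l\mathrm{)}$ for all $2\le l\le j$. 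These routes need not be embedded, so for each $\fS\in\Omega$ I first use Proposition~\ref{prop:routeembed} to pass to a finite cover of $X$ in which every elevation of $\fS$ is embedded, then apply $P(j)$ to each of the (finitely many) closed elevations of $\fS$ there --- these are essential, embedded, and still satisfy $\mathrm{(Hyp}_l\mathrm{)}$ for $2\le l\le j$, since these properties are inherited by elevations --- and finally combine everything with Lemma~\ref{lem:factorarg} to get a finite cover of $X$ with no closed elevations of $\fS$. Passing to a common finite cover over all $\fS\in\Omega$ yields a finite cover $X_1\to X$, itself directly special, with no closed elevations of any route in $\Omega$.

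The crucial claim is: \emph{every closed elevation $\fR_1$ of $\fR$ to $X_1$ satisfies $\kappa_j(\fR_1)<\kappa_j(\fR)$.} Granting this, each such $\fR_1$ is an essential embedded closed route of length $n$ in the finite directly special cube complex $X_1$, still satisfying $\mathrm{(Hyp}_l\mathrm{)}$ for all $2\le l\le j-1$ (inheritance again) and with strictly smaller $\kappa_j$; by the inner induction hypothesis there is a finite cover of $X_1$ with no closed elevations of $\fR_1$, and Lemma~\ref{lem:factorarg} then produces a finite cover of $X$ with no closed elevations of $\fR$, completing both inductions. (When $\kappa_j(\fR)=1$ the claim forces $\kappa_j(\fR_1)<1$, which is impossible since $\cP_j(\fR_1)\neq\emptyset$; so $X_1$ itself already works, which is the base case of the inner induction.) To prove the claim, suppose $\fR_1$ is a closed elevation with $\kappa_j(\fR_1)=\kappa_j(\fR)=k$ (recall $\kappa_j(\fR_1)\le k$ by Lemma~\ref{lem:kappadown}). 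Pick a $\le$-chain of length $k$ in $\cP_j(\fR_1)/\sim$; by Lemma~\ref{lem:kappadown} its image in $\cP_j(\fR)/\sim$ is a $\le$-chain of length $k=\kappa_j(\fR)$, hence a maximal chain, so its top class $[Z]_j$ is $\le$-maximal in $\cP_j(\fR)/\sim$. Choosing (via Definition~\ref{defn:cP}) an elevation $\wt{\fR}$ of $\fR_1$, hence of $\fR$, to $\wt{X}$ with $\Pi_{\wt{Y}_1}(\wt{Y}_j)$ an elevation of a representative of the top of the chosen chain --- which, tracing through the construction of the map in Lemma~\ref{lem:kappadown}, is an elevation of a representative of $[Z]_j$ --- we may apply Lemma~\ref{lem:Omega}\ref{item:matchOmega}: $\wt{\fR}$ has the same initial and terminal vertices as some elevation $\wt{\fS}$ of a route $\fS\in\Omega$ to $\wt{X}$. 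But $\fR_1$ is closed, so these two vertices have the same image in $X_1$; hence $\wt{\fS}$ descends to a \emph{closed} elevation of $\fS$ to $X_1$, contradicting the choice of $X_1$. This proves the claim.

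The heart of the argument --- and the main obstacle --- is precisely this last claim: that after killing all closed elevations of the auxiliary routes in $\Omega$, the complexity $\kappa_j$ must strictly drop on every closed elevation of $\fR$. Its proof hinges on combining the ``maximal-projection matching'' property of $\Omega$ (Lemma~\ref{lem:Omega}\ref{item:matchOmega}) with the chain-length-preserving behaviour of $\kappa_j$ under covers (Lemma~\ref{lem:kappadown}), and the subtlety is that the matching is only guaranteed when $\Pi_{\wt{Y}_1}(\wt{Y}_j)$ lies in a $\le$-maximal class --- which is why one must extract such an elevation from a chain of maximal length. Everything else (the reverse induction on $j$, inheritance of the $\mathrm{(Hyp}_l\mathrm{)}$ properties and of direct specialness under finite covers, the detour through Proposition~\ref{prop:routeembed} to handle the possibly non-embedded routes of $\Omega$, and the bookkeeping via Lemma~\ref{lem:factorarg}) is routine.
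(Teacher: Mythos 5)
Your proof is correct and follows essentially the same route as the paper: build $\Omega(\fR,j)$, kill its closed elevations in a finite cover, and use Lemma~\ref{lem:kappadown} together with Lemma~\ref{lem:Omega}\ref{item:matchOmega} to force $\kappa_j$ to drop strictly on every surviving closed elevation of $\fR$. The only difference is packaging: you run a reverse induction on $j$ (your $P(j)$) nested with an induction on the single quantity $\kappa_j(\fR)$, whereas the paper phrases it as a lexicographic double induction on $\bigl(n-1-\mathrm{Hyp}(\fR),\ \sum_{i=2}^{n-1}\kappa_i(\fR)\bigr)$; these are equivalent, and your single-variable inner induction is arguably a little cleaner.
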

\begin{proof}
We proceed by a double induction, inducting first on the number 
\begin{equation}
	\mathrm{Hyp}(\fR)=\max\{j\mid \fR\text{ satisfies (Hyp$_l$) for all $2\leq l\leq j$}\}.
\end{equation}
This first induction will go in a backwards direction, with $\mathrm{Hyp}(\fR)=n-1$ being the base case (Theorem \ref{thm:embroutes} holds in this case by assumption of the lemma), and $2\leq\mathrm{Hyp}(\fR)\leq n-1$ the general case (note that we always have  $2\leq\mathrm{Hyp}(\fR)$ since any route satisfies (Hyp$_2$)). 
The second induction will be on the sum of complexities $\sum_{j=2}^{n-1}\kappa_j(\fR)$ (and will go in a forwards direction).

If $\fR$ satisfies (Hyp$_j$), then any elevation $\hat{\fR}$ of $\fR$ to a finite cover of $X$ will also satisfy (Hyp$_j$). This is simply because the set of elevations of $\hat{\fR}$ to the universal cover $\wt{X}\to X$ is a subset of the set of elevations of $\fR$ to $\wt{X}$.
As a result, the number $\mathrm{Hyp}(\fR)$ can only increase when replacing $\fR$ by an elevation in a finite cover.
Similarly, Lemma \ref{lem:kappadown} implies that the complexities $\kappa_j(\fR)$ can only decrease when passing to a finite cover.

Now suppose $\mathrm{Hyp}(\fR)=j-1<n-1$.
Apply Lemma \ref{lem:Omega} to produce the finite collection $\Omega=\Omega(\fR,j)$ of finite essential closed routes in $X$.
Parts \ref{item:Hypj} and \ref{item:Hypl} of Lemma \ref{lem:Omega} ensure that each $\fR_{proj}\in\Omega$ satisfies $\mathrm{Hyp}(\fR_{proj})\geq j$.
By Proposition \ref{prop:routeembed}, there is a finite cover $X'\to X$ where all elevations of routes in $\Omega$ embed. Each such elevation $\fR'_{proj}$ also satisfies $\mathrm{Hyp}(\fR'_{proj})\geq j$, so Theorem \ref{thm:embroutes} holds for the routes $\fR'_{proj}$ by induction. Hence there is a further finite cover $\hat{X}\to X'$ with no closed elevation of any of the routes $\fR'_{proj}$. This implies that $\hat{X}$ has no closed elevation of any of the routes in $\Omega$ (using Lemma \ref{lem:elevelev}\ref{item:R''toR'}).

We now claim that any closed elevation $\hat{\fR}$ of $\fR$ to $\hat{X}$ satisfies $\kappa_j(\hat{\fR})<\kappa_j(\fR)$.
Suppose for contradiction that $\kappa_j(\hat{\fR})=m=\kappa_j(\fR)$.
Let $[\hat{Z}_1]_j\lneq[\hat{Z}_2]_j\lneq\cdots\lneq[\hat{Z}_m]_j$ be a chain in $\cP_j(\hat{\fR})/\sim$.
By Lemma \ref{lem:kappadown}, this descends to a chain $[Z_1]\lneq[Z_2]\lneq\cdots\lneq[Z_m]$ in $\cP_j(\fR)$.
Since $m=\kappa_j(\fR)$, we know that $[Z_m]_j$ is $\leq$-maximal.
Let $\wt{\fR}=(\ti{y}_0,\wt{Y}_1,\ti{y}_1,\wt{Y}_2,\ti{y}_2,\dots,\wt{Y}_n,\ti{y}_n)$ be an elevation of $\hat{\fR}$ to the universal cover $\wt{X}\to X$ with $\Pi_{\wt{Y}_1}(\wt{Y}_j)$ an elevation of $\hat{Z}_m$ (hence also an elevation of $Z_m$).
By Lemma \ref{lem:Omega}\ref{item:matchOmega}, $\wt{\fR}$ has the same initial and terminal vertices as some elevation $\wt{\fR}_{proj}$ of a route $\fR_{proj}\in\Omega$.
Let $\hat{\fR}_{proj}$ be the (unique) elevation of $\fR_{proj}$ to $\hat{X}$ such that $\wt{\fR}_{proj}$ is an elevation of $\hat{\fR}_{proj}$ (Lemma \ref{lem:elevelev}).
Then $\hat{\fR}_{proj}$ has the same initial and terminal vertices as $\hat{\fR}$. But $\hat{\fR}$ is a closed route by assumption, so $\hat{\fR}_{proj}$ is a closed elevation of $\fR_{proj}$ to $\hat{X}$, in contradiction with the construction of $\hat{X}$.



By induction, Theorem \ref{thm:embroutes} holds for each closed elevation of $\fR$ to $\hat{X}$ (if any even exist), thus there exists a finite cover of $\hat{X}$ with no closed elevations of $\fR$.
\end{proof}

\subsection{Property (Trap)}\label{subsec:trap}

The following property will be crucial in the next section when we apply the Walker and Imitator construction.

\begin{defn}(Property (Trap))\\\label{defn:trap}
Let $\fR=(y_0,Y_1,y_1,Y_2,y_2,\dots,Y_n,y_n)$ be an embedded route in a finite non-positively curved cube complex $X$ with $n\geq4$.
We say that $\fR$ satisfies property (Trap) if, for any vertex $y\in Y_1\cap Y_2$ and edge $e$ in $Y_1$ incident to $y$ whose interior lies outside of $Y_2$, the hyperplane $H=H(e)$ is disjoint from $Y_2,Y_3,\dots,Y_{n-1}$.
(The reason for the name (Trap) will become clear in the next section!) A picture of a route $\fR$ that fails property (Trap) is given in Figure \ref{fig:notrap}.
\end{defn}

\begin{figure}[H]
	\centering
	\scalebox{0.8}{
		\begin{tikzpicture}[auto,node distance=2cm,
			thick,every node/.style={circle,draw,font=\small},
			every loop/.style={min distance=2cm},
			hull/.style={draw=none},
			]
			\tikzstyle{label}=[draw=none,font=\LARGE]

			\draw[rounded corners=20pt] (0,0) rectangle (3,6) {};
			\draw[rounded corners=20pt] (1,1) rectangle (5,3) {};
			\draw[rounded corners=20pt] (4,1) rectangle (7,3) {};
			\draw[rounded corners=20pt] (9,1) rectangle (12,3) {};
			\draw[rounded corners=20pt] (14,1) rectangle (17,3) {};

			\node[fill] at (2,3){};
			\node[fill] at (2,4){};
			\path[line width=2pt] (2,3) edge (2,4);
			\draw[rounded corners=30pt,red] (-1,3.5)--(6,3.5)--
			(6,5.5)--(10.5,5.5)--(10.5,0);
			
			\node[label] (e) at (2.3,4.3){$e$};
			\node[label] (Y1) at (1.5,5.4){$Y_1$};
			\node[label] (Y2) at (3.5,2){$Y_2$};
			\node[label] (Y3) at (6,2){$Y_3$};
			\node[label] (Yi) at (9.8,2){$Y_i$};
			\node[label] (Yn) at (15.5,2){$Y_n$};
			\node[label,red] (H) at (5.5,4.5){$H$};
			
			\path (7.5,2) edge [dashed] (8.5,2);
			\path (12.5,2) edge [dashed] (13.5,2);

		\end{tikzpicture}
	}
	\caption{Picture of a route $\fR$ that fails property (Trap).}\label{fig:notrap}
\end{figure}
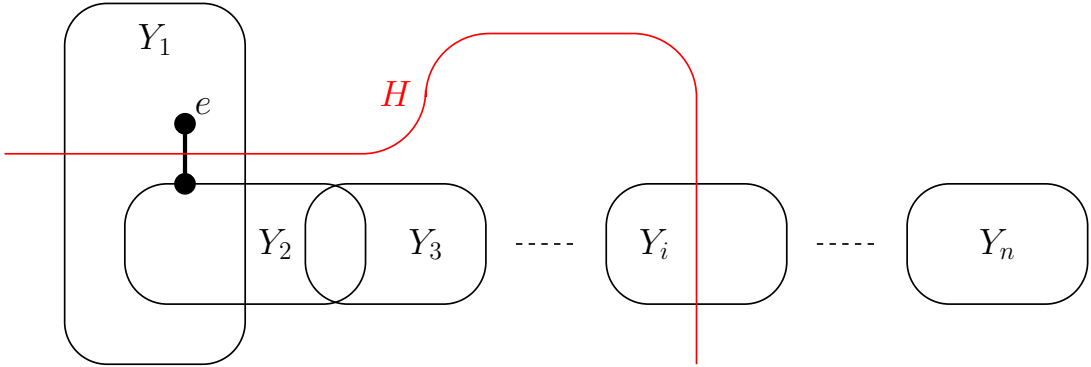

\begin{lem}\label{lem:propTrap}
Let $\fR$ be an essential embedded closed route of length $n\geq4$ in a finite directly special cube complex $X$.
Suppose $\fR$ satisfies property (Hyp$_j$) for all $2\leq j\leq n-1$, and suppose Theorem \ref{thm:embroutes} holds for routes of length less than $n$.
Then there is a finite cover $\hat{X}\to X$ such that each closed elevation of $\fR$ to $\hat{X}$ satisfies property (Trap).
\end{lem}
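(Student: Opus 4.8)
The plan is to enumerate the finitely many ways property (Trap) can fail and, for each, build an essential embedded closed route of length strictly less than $n$ whose closed elevations "witness" that failure; killing these shorter routes in a common finite cover (using the hypothesis that Theorem \ref{thm:embroutes} holds below length $n$) then forces (Trap). As a preliminary reduction I would apply Proposition \ref{prop:routeembed} to pass to a finite cover over which every elevation of $\fR$ embeds and does not inter-osculate with hyperplanes. Since property (Trap) is inherited by elevations of routes (a failure upstairs projects to a failure downstairs: a bad vertex and edge in $\hat{Y}_1$ map to a bad vertex and edge in the complex they cover), and since property (Hyp$_j$) and essentialness are inherited by elevations, a factor-type argument in the style of Lemma \ref{lem:factorarg} reduces the lemma to this cover and a closed elevation of $\fR$. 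After this I may assume every closed elevation of $\fR$ (to any further finite cover) is embedded and has no hyperplane inter-osculating with any $\hat{Y}_i$; in particular the ``$Y_2$''-clause of (Trap) holds automatically — an edge of $\hat{Y}_1$ leaving $\hat{Y}_2$ gives a hyperplane osculating but, by the absence of inter-osculation, disjoint from $\hat{Y}_2$ — so only the indices $l\in\{3,\dots,n-1\}$ remain to be controlled.

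Fix a vertex $y\in Y_1\cap Y_2$, an edge $e$ of $Y_1$ at $y$ with interior outside $Y_2$, and an index $l$ with $H:=H(e)$ meeting $Y_l$; choose a vertex $z\in N(H)\cap Y_l$ lying on an edge of $Y_l$ dual to $H$, which is possible because $N(H)$ is a locally convex subcomplex (Definition \ref{defn:carrier}). When $3\le l\le n-2$ I would take
$$\fR_{y,e,l}=(y,Y_1,y_1,Y_2,y_2,\dots,Y_{l-1},y_{l-1},Y_l,z,N(H),y),$$
an embedded closed route of length $l+1\le n-1$, and when $l=n-1$ I would take
$$\fR_{y,e,l}=(y,N(H),z,Y_{n-1},y_{n-2},Y_{n-2},y_{n-3},\dots,Y_3,y_2,Y_2,y),$$
of length $n-1$. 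In either case, a bad datum occurring in a closed elevation $\hat{\fR}$ of $\fR$ gives rise to a closed elevation of the corresponding $\fR_{y,e,l}$, using Remark \ref{remk:carrierelev} to recognise the carrier $N(\hat H)$ as an elevation of $N(H)$. Hence, once each $\fR_{y,e,l}$ is known to be essential, Theorem \ref{thm:embroutes} (for length $<n$) yields a finite cover $\hat{X}_{y,e,l}\to X$ with no closed elevations of $\fR_{y,e,l}$, and a finite cover $\hat{X}\to X$ factoring through all of the (finitely many) $\hat{X}_{y,e,l}$ has the property that every closed elevation of $\fR$ to $\hat{X}$ satisfies (Trap).

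The remaining point — and the heart of the argument, where property (Hyp$_l$) is used — is that each $\fR_{y,e,l}$ is essential. If $\fR_{y,e,l}$ were inessential, it would have a closed elevation to the universal cover $\wt{X}$ (Lemma \ref{lem:routeelev}); in that closed elevation the subcomplexes over $Y_2,\dots,Y_l$ overlap at lifts of $y_2,\dots,y_{l-1}$, so by Lemma \ref{lem:routeelev}\ref{item:extendelev} they extend to a full elevation $\wt{\fR}$ of $\fR$ to $\wt{X}$; meanwhile the edge $e$ lifts to an edge $\tilde{e}$ lying both in the carrier $\wt{N(H)}$ and in the $\wt{Y}_1$ of $\wt{\fR}$, so $\wt{H}:=H(\tilde{e})$ lies in $\hH(\wt{Y}_1)$ and, via the lift of the edge through $z$, in $\hH(\wt{Y}_l)$, while it osculates and — by the absence of inter-osculation — is disjoint from $\wt{Y}_2$; this contradicts property (Hyp$_l$). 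The delicate step is ensuring that $\tilde{e}$ genuinely lands in the $\wt{Y}_1$ of the extended elevation: this is immediate when $\fR_{y,e,l}$ itself runs through $Y_1$ (the case $l\le n-2$, where the $Y_1$-segment from $y$ to $y_1$ pins both lifts into the same lift of $Y_1$), but for $l=n-1$ one must handle the position of $y$ relative to the route basepoint $y_1$ inside $Y_1\cap Y_2$, i.e. match up lifts when $Y_1\cap Y_2$ is disconnected. Threading this through, together with organising the finitely many cases inside the larger induction of Section \ref{sec:control} (on $n$ and on the complexities $\kappa_j$), is where I expect the real work to lie.
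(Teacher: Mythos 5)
Your overall strategy matches the paper's: translate each potential failure of (Trap) into an essential shorter closed route (length $<n$), kill these in a common finite cover via the inductive hypothesis, and then observe that a (Trap) failure in a closed elevation of $\fR$ would descend to a closed elevation of one of the shorter routes. The difference lies in the shorter routes. The paper uses a single family $\fR(x_1,e,i,x_i)=(x_1,Y_2,y_2,\dots,Y_i,x_i,N(H),x_1)$ of length $i$, uniformly for $2\le i\le n-1$; the crucial design feature is that $N(H)$ meets $Y_2$ at the osculation vertex $x_1$ itself, so in any closed elevation $\ti{x}_1\in\wt{Y}_2\cap N(\wt{H})$, the osculation lifts, and $\wt{H}\cap\wt{Y}_2=\emptyset$ is immediate --- no preliminary reduction needed, and $l=2$ is handled uniformly. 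Your routes for $3\le l\le n-2$ attach $N(H)$ to $Y_1$ instead and therefore need the preliminary appeal to Proposition \ref{prop:routeembed} to deduce $\wt{H}\cap\wt{Y}_2=\emptyset$; they are also one subcomplex longer. In exchange you gain that $\ti{e}\subset\wt{Y}_1$ is automatic, and with the reduction in place that part of your argument is sound.

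The genuine gap is the one you flag for $l=n-1$. Your route $(y,N(H),z,Y_{n-1},\dots,Y_2,y)$ omits $Y_1$, so a closed elevation gives $\ti{y}\in\wt{Y}_2\cap N(\wt{H})$ and the lift $\ti{e}$ of $e$ at $\ti{y}$ sits in the unique elevation $\wt{Y}_1'$ of $Y_1$ containing $\ti{y}$. But $\wt{Y}_1'\cap\wt{Y}_2$ projects into the single component of $Y_1\cap Y_2$ containing $y$, so if $y_1$ lies in a different component of $Y_1\cap Y_2$ then $\wt{Y}_1'$ contains no lift of $y_1$ lying in $\wt{Y}_2$ and hence cannot appear as the first subcomplex of any elevation of $\fR$ extending the given $\wt{Y}_2,\dots,\wt{Y}_{n-1}$ via Lemma \ref{lem:routeelev}\ref{item:extendelev}. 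Without such an elevation you have no way to deduce $\wt{H}\in\hH(\wt{Y}_1)$, which is the input that (Hyp$_{n-1}$) would turn into a contradiction. (The paper's step --- extending $(\ti{x}_1,\wt{Y}_2,\dots,\wt{Y}_i)$ and asserting $\wt{H}\in\hH(\wt{Y}_1)$ --- quietly relies on $x_1$ and $y_1$ sharing a component of $Y_1\cap Y_2$ for the same reason.) The resolution is to observe that Lemma \ref{lem:routetrap} only ever uses (Trap) at vertices the imitator can reach from $y_1$ while staying in $Y_1\cap Y_2$, i.e.\ vertices in the component of $y_1$; restricting the enumeration of (Trap) failures to that component makes $y$ and $y_1$ joinable by a path in $Y_1\cap Y_2$, which then produces the desired elevation and closes the gap, for your routes or the paper's.
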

\begin{proof}
	Say $\fR=(y_0,Y_1,y_1,Y_2,y_2,\dots,Y_n,y_n)$.
Let $x_1\in Y_1\cap Y_2$ be a vertex and $e$ an edge in $Y_1$ incident to $x_1$ whose interior lies outside of $Y_2$, and suppose that the hyperplane $H=H(e)$ intersects $Y_i$ for some $2\leq i\leq n-1$ (if no such $x_1$ and $e$ exist then $\fR$ already satisfies property (Trap) and we are done).
Let $e_i$ be an edge in $Y_i$ which is dual to $H$, and let $x_i$ be a vertex incident to $e_i$.
In particular, $x_i\in N(H)\cap Y_i$ (recall that $N(H)$ is the carrier of $H$, see Definition \ref{defn:carrier}).
Consider the route
$$\fR(x_1,e,i,x_i)=(x_1,Y_2,y_2,Y_3,y_3,\dots,Y_i,x_i,N(H),x_1).$$
$\fR(x_1,e,i,x_i)$ is clearly a closed embedded route of length $i$. 

We claim that $\fR(x_1,e,i,x_i)$ is essential.
Indeed, if not then Lemma \ref{lem:routeelev}\ref{item:essential} implies that there is a closed elevation
$$\wt{\fR}(x_1,e,i,x_i)=(\tilde{x}_1,\wt{Y}_2,\tilde{y}_2,\wt{Y}_3,\tilde{y}_3,\dots,\wt{Y}_i,\tilde{x}_i,N(\wt{H}),\tilde{x}_1)$$
of $\fR(x_1,e,i,x_i)$ to the universal cover $\wt{X}\to X$. 
Note that $\wt{H}$ is a hyperplane of $\wt{X}$ (any elevation of $N(H)$ to $\wt{X}$ is a hyperplane carrier in $\wt{X}$ by Remark \ref{remk:carrierelev}).
Note also that the osculation of $H$ and $Y_2$ at $(x_1,e)$ lifts to an osculation of $\wt{H}$ and $\wt{Y}_2$; since convex subcomplexes of CAT(0) cube complexes do not inter-osculate with hyperplanes, we see that $\wt{H}$ does not intersect $\wt{Y}_2$.
In addition, $e_i$ lifts to an edge in $\wt{Y}_i$ which is dual to $\wt{H}$ and incident to $\ti{x}_i$, so $\wt{H}$ intersects $\wt{Y}_i$.
If $i=2$ then we get a contradiction straight away, otherwise use Lemma \ref{lem:routeelev}\ref{item:extendelev} to extend $(\tilde{x}_1,\wt{Y}_2,\tilde{y}_2,\wt{Y}_3,\tilde{y}_3,\dots,\wt{Y}_i)$ to an elevation of $\fR$ to $\wt{X}$. We know that $\wt{H}\in\hH(\wt{Y}_1)\cap\hH(\wt{Y}_i) - \hH(\wt{Y}_2)$, so $\fR$ fails property (Hyp$_i$), a contradiction. This proves the claim.

We have assumed that Theorem \ref{thm:embroutes} holds for routes of length less than $n$, therefore it holds for $\fR(x_1,e,i,x_i)$, and there exists a finite cover $\hat{X}\to X$ with no closed elevations of $\fR(x_1,e,i,x_i)$.
Moreover, we may choose $\hat{X}$ so that this works for all choices of $x_1$, $e$, $i$ and $x_i$ as above (have one finite cover for each choice of $x_1,e,i,x_i$, then take a finite cover that factors through all of them).
	
Finally, we claim that each closed elevation $\hat{\fR}=(\hat{y}_0,\hat{Y}_1,\hat{y}_1,\hat{Y}_2,\hat{y}_2,\dots,\hat{Y}_n,\hat{y}_n)$ of $\fR$ to $\hat{X}$ satisfies property (Trap) (it will be embedded since we assumed $\fR$ is embedded).
Suppose for contradiction that $\hat{x}_1\in\hat{Y}_1\cap\hat{Y}_2$ is a vertex and $\hat{e}$ is an edge in $\hat{Y}_1$ incident to $\hat{x}_1$ whose interior lies outside of $\hat{Y}_2$, and so that the hyperplane $\hat{H}=H(\hat{e})$ intersects $\hat{Y}_i$ for some $2\leq i\leq n-1$.
Let $\hat{e}_i$ be an edge in $\hat{Y}_i$ which is dual to $\hat{H}$, and let $\hat{x}_i$ be a vertex incident to $\hat{e}_i$.
Say $\hat{x}_1$, $\hat{e}$ and $\hat{x}_i$ descend to $x_1$, $e$ and $x_i$ in $X$.
But then $(\hat{x}_1,\hat{Y}_2,\hat{y}_2,\hat{Y}_3,\hat{y}_3,\dots,\hat{Y}_i,\hat{x}_i,N(\hat{H}),\hat{x}_1)$ is a closed elevation of the route $\fR(x_1,e,i,x_i)$ to $\hat{X}$, contrary to the construction of $\hat{X}$.
\end{proof}

\bigskip
\section{The Walker and Imitator construction}\label{sec:walker}

In this section we complete the proof of Theorem \ref{thm:embroutes}.
The main tool we will use in this section is the Walker and Imitator construction from \cite{Shepherd23}.
The construction is essentially equivalent to the canonical completion and retraction of Haglund--Wise \cite{HaglundWise08}, but the interpretation in terms of walker and imitator will better facilitate our arguments. 
Our description of the walker-imitator construction here will be fairly brief, we refer the reader to \cite{Shepherd23} for further discussion and examples.
For simplicity we will describe the construction for the special case of a subcomplex (which is the only case we need) rather than for a general local isometry of cube complexes.

\begin{cons}(Walker and Imitator)\\\label{cons:imitator}
	Let $Y$ be a locally convex subcomplex of a directly special cube complex $X$. We consider two people wandering around the 1-skeleta of $X$ and $Y$: the \emph{walker} wanders around $X$ while the \emph{imitator} wanders around $Y$. The walker wanders freely around the 1-skeleton of $X$, while the imitator tries to ``copy'' the walker in the following way: if the imitator and walker are at vertices $(y,x)\in Y\times X$ and the walker traverses an edge $e$ incident to $x$, then the imitator traverses the edge $f$ incident to $y$ with $f\parallel e$, if such an edge exists, otherwise they remain at $y$ (see Definition \ref{defn:parallelism} for the definition of parallelism). Note that the edge $f$, if it exists, will be unique because $H(e)$ doesn't self-osculate at $y$. 
	
	Iterating the process, if the walker travels along a path $\gamma$, starting at $x$, then the imitator travels along a path that we denote $\delta=\delta(\gamma,y)$, starting at $y$. 
\end{cons}

\begin{lem}\label{lem:gammasquare}\cite[Lemma 3.2]{Shepherd23}\\
	If $\gamma$ and $\gamma'$ are homotopic paths in $X$ (rel endpoints), then for any vertex $y\in Y$ the paths $\delta(\gamma,y)$ and $\delta(\gamma',y)$ are homotopic in $Y$.
\end{lem}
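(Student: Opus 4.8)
Looking at Lemma \ref{lem:gammasquare}, I need to prove that if $\gamma$ and $\gamma'$ are homotopic rel endpoints in $X$, then the imitator paths $\delta(\gamma,y)$ and $\delta(\gamma',y)$ are homotopic rel endpoints in $Y$.

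The key reduction is: since $\gamma$ and $\gamma'$ are homotopic rel endpoints in a non-positively curved cube complex, they differ by a finite sequence of elementary homotopies, where each elementary homotopy either (a) inserts or deletes a backtrack (a subpath $e\bar e$), or (b) pushes a path across a square (replacing two consecutive edges forming a corner of a square by the other two edges of that square). So it suffices to check that the imitator path is unchanged up to homotopy in $Y$ under each of these two moves, applied to a subpath of $\gamma$.

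So my proof plan is:

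\begin{proof}
It suffices to prove the claim when $\gamma$ and $\gamma'$ differ by a single elementary homotopy, since a general homotopy rel endpoints is a concatenation of such moves. There are two cases.

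\textbf{Case 1: backtracking.} Suppose $\gamma'$ is obtained from $\gamma$ by inserting a backtrack $e\bar e$ at some vertex $x$ reached after an initial segment $\gamma_0$ of $\gamma$; let $y'\in Y$ be the position of the imitator after traversing $\delta(\gamma_0,y)$. When the walker traverses $e$, either the imitator moves along the edge $f$ incident to $y'$ with $f\parallel e$ (if it exists), or the imitator stays at $y'$. In the first case, traversing $\bar e$ the walker traverses an edge parallel to $e$, and the unique edge incident to the imitator's new position parallel to $e$ is $\bar f$ (it exists, namely the other orientation of $f$), so the imitator returns to $y'$ along $f\bar f$. In the second case the imitator stays at $y'$ throughout. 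Either way the imitator, after the inserted backtrack, is back at $y'$ having traversed a null-homotopic loop in $Y$, and the rest of the imitator path (corresponding to the remainder of $\gamma$) is traversed identically. Hence $\delta(\gamma',y)$ is homotopic rel endpoints to $\delta(\gamma,y)$ in $Y$.

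\textbf{Case 2: pushing across a square.} Suppose $\gamma$ contains consecutive edges $e_1e_2$ forming a corner of a square $S$ of $X$ at a vertex $x$, and $\gamma'$ replaces this by the other corner $e_2'e_1'$ of $S$, where $e_1\parallel e_1'$ and $e_2\parallel e_2'$. Let $\gamma_0$ be the initial segment of $\gamma$ before $e_1e_2$, let $y'\in Y$ be the imitator's position after $\delta(\gamma_0,y)$, and let $x'$ be the walker's position then (the common startpoint of $e_1$ and $e_2'$). The imitator's response to $e_1$ (resp. $e_2$) is to move along the unique edge at its current vertex parallel to $e_1$ (resp. $e_2$), if it exists, else stay; similarly for $e_2'$, $e_1'$. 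One checks by examining the link of the imitator's vertex $y'$ in $Y$ that one of the following holds: either there are edges $f_1\parallel e_1$ and $f_2\parallel e_2$ at $y'$ spanning a square $S'$ of $Y$ (here $Y$ locally convex, hence its inclusion a local isometry, forces the square $S'$ to lie in $Y$ once both edges do), in which case the imitator traverses two sides of $S'$ one way for $\gamma$ and the other two sides for $\gamma'$; or at most one of the parallelism classes is represented at $y'$, in which case the imitator traverses only a single edge (or stays still) for both $e_1e_2$ and $e_2'e_1'$ — and a short case check shows the resulting two-step imitator subpaths coincide or are related by a backtrack insertion as in Case 1. In every case the imitator subpath for $e_1e_2$ is homotopic rel endpoints in $Y$ to that for $e_2'e_1'$, and the remaining imitator path (for the common tail of $\gamma$) is traversed identically. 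Thus $\delta(\gamma',y)$ is homotopic rel endpoints to $\delta(\gamma,y)$ in $Y$.

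Since any homotopy rel endpoints between $\gamma$ and $\gamma'$ factors as a finite sequence of the above two moves, composing the homotopies of imitator paths completes the proof.
\end{proof}

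The main obstacle I expect is the bookkeeping in Case 2: one must carefully enumerate which parallelism classes of the square $S$ are realized by edges at the imitator's vertex $y'$, and use that $X$ is directly special (no self-osculations, two-sidedness, no inter-osculations) together with local convexity of $Y$ to rule out degenerate configurations and to conclude that when both edges $f_1, f_2$ exist they actually bound a square in $Y$. The backtracking case (Case 1) is comparatively routine.
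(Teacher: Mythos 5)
The paper does not give its own proof of this lemma; it is cited directly from \cite[Lemma 3.2]{Shepherd23}, so there is no in-paper argument to compare against. Your proof, evaluated on its own terms, is correct and takes the natural route: reduce to elementary homotopies (backtrack insertions/deletions and pushing a two-edge corner across a square) and track the imitator's response to each move.

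Your Case 1 is routine and correct as written. In Case 2 the only point that really needs direct specialness is the one you flag: if edges $f_1\parallel e_1$ and $f_2\parallel e_2$ both exist at the imitator's vertex $y'$, then since $H(e_1)$ and $H(e_2)$ intersect, the absence of inter-osculation forces $f_1,f_2$ to span a square $S'$ in $X$, and local convexity of $Y$ (fullness of $\link_Y(y')$ in $\link_X(y')$) puts $S'$ inside $Y$; the two imitator responses then traverse the two halves of $\partial S'$ and are homotopic via $S'$. In the degenerate subcase where only $f_1$ (say) exists at $y'$ and not $f_2$, one should observe that the imitator, after traversing $f_1$ to $y''$, still has no edge parallel to $e_2$ at $y''$: any such edge $g$ would either span a square with $f_1$ at $y''$ (forcing an edge parallel to $e_2$ back at $y'$, contradiction) or inter-osculate with $H(f_1)$ (impossible since $X$ is directly special). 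So the imitator response to $e_1e_2$ is $f_1\cdot\mathrm{const}$ and to $e_2'e_1'$ is $\mathrm{const}\cdot f_1$ — the same edge-path, no backtrack needed. Your caveat ``or are related by a backtrack insertion'' is therefore unnecessary (the paths literally coincide after discarding constant steps), but it does not introduce an error. The subcase where neither parallelism class is represented at $y'$ gives the constant path twice and is immediate. Altogether the proof is sound.
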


\begin{cons}(Imitator covers)\\\label{cons:cover}
	Let $X$ and $Y$ be as in Construction \ref{cons:imitator} and pick a base vertex $y\in Y$. We have a right-action of $G:=\pi_1(X,y)$ on the vertex set of $Y$, defined as follows: for $y'\in Y$ a vertex and $[\gamma]\in G$, let $y'\cdot[\gamma]$ be the endpoint of the path $\delta(\gamma,y')$ -- this only depends on the homotopy class of $\gamma$ by Lemma \ref{lem:gammasquare}. Let $G_y<G$ be the stabiliser of $y$, which has finite index in $G$ if $Y$ is finite. 
	Define the \emph{imitator cover of $(X,Y,y)$} as the based cover $(\dot{X},\dot{y})\to(X,y)$ that corresponds to the subgroup $G_y$.
\end{cons}

\begin{remk}\label{remk:notclosedlift}
	If $\gamma$ is a loop in $X$ based at $y$ such that $\delta(\gamma,y)$ does not end at $y$, then $[\gamma]\notin G_y$, so the lift of $\gamma$ to the imitator cover $\dot{X}$ based at $\dot{y}$ does not close up as a loop.
\end{remk}

The following lemma enables us to control the movement of the imitator in certain situations. This lemma is a slight variation on \cite[Lemma 4.5]{Shepherd23}.

\begin{lem}(Subcomplex Entrapment)\\\label{lem:subcomplextrap}
	Let $Y$ be a locally convex subcomplex of a directly special cube complex $X$.
	Let the walker be in $X$ and the imitator in $Y$.
	Let $Z\subset X$ be a locally convex subcomplex that does not inter-osculate with hyperplanes of $X$. Suppose the imitator and walker start at positions $(y,x)\in Y\times X$.
	\begin{enumerate}
		\item\label{item:stayin} If $y,x\in Z$, then the imitator stays inside $Y\cap Z$ as long as the walker stays inside $Z$.
		\item\label{item:stayout} If $x\in Z$ but $y\notin Z$, then the imitator stays outside of $Z$ as long as the walker stays inside $Z$.
	\end{enumerate}
\end{lem}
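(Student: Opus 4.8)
The plan is to prove both parts simultaneously by induction on the length of the path traversed by the walker. The key geometric input is the following consequence of the hypothesis that $Z$ does not inter-osculate with hyperplanes of $X$: \emph{if $H$ is a hyperplane of $X$ dual to some edge of $Z$, then every edge of $X$ that is dual to $H$ and has at least one endpoint in $Z$ is entirely contained in $Z$.} Indeed, such an $H$ intersects $Z$, so by hypothesis $H$ and $Z$ cannot osculate; by Definition~\ref{defn:complexosculate} this means there is no vertex $z\in Z$ and edge $f$ incident to $z$ with $H(f)=H$ and the interior of $f$ outside $Z$. Since $Z$ is a subcomplex, an edge with an endpoint in $Z$ either lies in $Z$ or has its interior outside $Z$, so the displayed statement follows.

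Next I would set up the induction. At each step the walker traverses an edge $e$ lying in the subcomplex $Z$ (since the walker's path stays in $Z$), so $H:=H(e)$ is dual to an edge of $Z$ and the observation above applies to it. The imitator, at its current position $y'$, either stays put --- in which case its position is unchanged --- or traverses the unique edge $f$ incident to $y'$ with $H(f)=H$, moving to the other endpoint of $f$. For part~\ref{item:stayin} the imitator starts at $y\in Y\cap Z$ (it always lies in $Y$ by Construction~\ref{cons:imitator}, and $y\in Z$ by hypothesis); assuming inductively that it is currently at a vertex $y'\in Z$, if it moves along such an $f$ then $f$ is an edge dual to $H$ with the endpoint $y'$ in $Z$, hence $f\subset Z$ by the observation, so the new position is again in $Z$. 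Thus the imitator stays in $Y\cap Z$, as claimed.

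For part~\ref{item:stayout} the imitator starts at $y\notin Z$; assuming inductively that it is currently at a vertex $y'\notin Z$, suppose it moves along an edge $f$ with $H(f)=H$ to a new position $y''$, and suppose for contradiction that $y''\in Z$. Then $f$ is an edge dual to $H$ with one endpoint $y''\in Z$ and the other endpoint $y'\notin Z$; in particular $f\not\subset Z$, so its interior lies outside $Z$, and $H$ and $Z$ osculate at $(y'';f)$ in the sense of Definition~\ref{defn:complexosculate}. But $H$ also intersects $Z$, so $H$ and $Z$ inter-osculate, contradicting the hypothesis on $Z$. Hence $y''\notin Z$ and the induction goes through.

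The argument is short and I do not anticipate a serious obstacle; the one point requiring care is the bookkeeping around the phrase ``the interior of $f$ lies outside $Z$'', where one must use that $Z$ is a subcomplex to conclude that an edge incident to a vertex of $Z$ is either contained in $Z$ or meets $Z$ only in that vertex. It is also worth recalling explicitly that direct specialness of $X$ is what guarantees the imitating edge $f$ is well-defined (no self-osculation at $y'$), so that the walker--imitator process referenced throughout makes sense.
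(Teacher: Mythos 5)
Your proof is correct and takes essentially the same approach as the paper: both arguments hinge on observing that if the walker's edge $e$ lies in $Z$, then $H(e)$ intersects $Z$, so an imitating edge $f$ parallel to $e$ cannot have exactly one endpoint in $Z$ (which would make $H(e)$ and $Z$ inter-osculate). You spell out the two cases separately where the paper treats them in one stroke, but the content is identical.
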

\begin{proof}
The only way that the imitator can switch between the inside and outside of $Z$ is by traversing an edge $f$ in $Y$ with exactly one endpoint in $Z$.
And this can only happen if the walker traverses an edge $e$ which is parallel to $f$.
Since $Z$ does not inter-osculate with the hyperplane $H(f)$, there is no edge in $Z$ which is parallel to $f$; so, as long as the walker stays inside $Z$, it is impossible for the imitator to  switch between the inside and outside of $Z$.
\end{proof}

We are now ready to apply the Walker and Imitator construction to prove a lemma about routes in directly special cube complexes.

\begin{lem}\label{lem:routetrap}
Let $\fR=(y_0,Y_1,y_1,Y_2,y_2,\dots,Y_n,y_n)$ be an embedded closed route in a finite directly special cube complex $X$ with $n\geq4$.
Suppose that $\fR$ satisfies property (Trap), $Y_n$ does not inter-osculate with hyperplanes and $Y_2\cap Y_n=\emptyset$.
Then there is a finite cover $\hat{X}\to X$ with no closed elevations of $\fR$.
\end{lem}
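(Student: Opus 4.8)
The plan is to apply the Walker and Imitator construction (Construction \ref{cons:imitator}) with $Y=Y_1$ and base vertex $y_0$, and to verify the walker--imitator property anticipated in the introduction: if the walker and imitator both start at $y_0$ and the walker traverses the realization of a path along $\fR$, then the imitator does \emph{not} return to $y_0$. Once this is established, the finite cover is produced from the imitator action, and the conclusion follows from the basic properties of route elevations.

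First I would establish the key property by tracking the imitator in three phases. Fix a path $\delta=(\delta_1,\dots,\delta_n)$ along $\fR$; since $\fR$ is embedded, each $\delta_i$ is an honest path in the locally convex subcomplex $Y_i$ running from $y_{i-1}$ to $y_i$. \textit{Phase 1:} while the walker traverses $\delta_1\subset Y_1$, the walker and imitator stay together --- each edge $e$ the walker crosses is incident to their common position, and since $H(e)$ does not self-osculate there the unique parallel edge the imitator crosses is $e$ itself --- so the imitator ends Phase 1 at $y_1\in Y_1\cap Y_2$. \textit{Phase 2:} while the walker traverses $\delta_2\cdots\delta_{n-1}$ (occupying $Y_i$ during $\delta_i$ for $2\le i\le n-1$), I claim the imitator stays in $Y_1\cap Y_2$. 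Indeed the imitator is always in $Y_1$, so the only way it could leave $Y_2$ is by crossing an edge $f\subset Y_1$ from a vertex $y\in Y_1\cap Y_2$ with the interior of $f$ outside $Y_2$, i.e.\ with $H(f)$ osculating $Y_2$ at $(y;f)$; property (Trap) then forces $H(f)$ to be disjoint from $Y_2,\dots,Y_{n-1}$, whereas for the imitator to cross $f$ the walker must simultaneously cross an edge dual to $H(f)$ lying inside whichever $Y_i$ ($2\le i\le n-1$) it currently occupies --- a contradiction. So the imitator ends Phase 2 at some vertex $z\in Y_1\cap Y_2$. \textit{Phase 3:} the walker traverses $\delta_n\subset Y_n$; since $Y_2\cap Y_n=\emptyset$ we have $z\notin Y_n$, and since $Y_n$ does not inter-osculate with hyperplanes, the Subcomplex Entrapment Lemma \ref{lem:subcomplextrap}\ref{item:stayout} keeps the imitator outside $Y_n$ throughout this phase. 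Hence the imitator finishes outside $Y_n$; but $\fR$ is closed, so $y_0=y_n\in Y_n$, and therefore the imitator does not return to $y_0$, proving the key property.

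To conclude, recall that the realizations of paths along $\fR$ are loops based at $y_0$, and by the key property none of them fixes the vertex $y_0$ under the right action of $G=\pi_1(X,y_0)$ on the (finite) vertex set of $Y_1$ from Construction \ref{cons:cover}. Let $\hat G\triangleleft G$ be the kernel of the associated permutation representation $G\to\operatorname{Sym}(VY_1)$; this is a finite-index normal subgroup, and no realization of a path along $\fR$ represents an element of $\hat G$. Since $\hat G$ is normal, no such realization is conjugate into $\hat G$, so no realization of a path along $\fR$ lifts to a loop in the finite cover $\hat X\to X$ corresponding to $\hat G$. Finally, if $\hat X$ had a closed elevation of $\fR$, then by Lemma \ref{lem:routeelev}\ref{item:elevtopath} a path along it would descend to a path along $\fR$ whose realization lifts to a loop in $\hat X$ --- a contradiction. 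Hence $\hat X$ has no closed elevations of $\fR$.

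The main obstacle is the key property, and within it Phase 2: one must see that property (Trap) is precisely what prevents the imitator from wandering out of $Y_2$ while the walker travels through $Y_2,\dots,Y_{n-1}$, and that the three phases interlock correctly --- the imitator must be inside $Y_2$ (hence, since $Y_2\cap Y_n=\emptyset$, outside $Y_n$) before the walker's final trip through $Y_n$. The passage to a finite cover is then routine; the only mild subtlety is that one should take the kernel of the permutation action (equivalently, the normal core of the vertex stabilizer) rather than the imitator cover itself, so as to control closed elevations based at every lift of $y_0$, not just at the distinguished basepoint.
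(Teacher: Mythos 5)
Your proof is correct and follows essentially the same approach as the paper's: apply the Walker/Imitator construction with $Y=Y_1$ and basepoint $y_0$, show via property (Trap), $Y_2\cap Y_n=\emptyset$, and Subcomplex Entrapment that the imitator cannot return to $y_0$ after the walker traverses a path along $\fR$, and then pass to a finite regular cover (the paper takes any finite regular cover factoring through the imitator cover; your choice of the kernel of the permutation representation is one such cover) and invoke Lemma \ref{lem:routeelev}\ref{item:elevtopath}. The three-phase breakdown and the conjugacy/normality remark are slightly more explicit than the paper's presentation but amount to the same argument.
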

\begin{proof}
We work with the walker in $X$ and the imitator in $Y_1$.
Let $(\dot{X},\dot{y}_0)\to(X,y_0)$ be the imitator cover of $(X,Y_1,y_0)$.
Let $\gamma=(\gamma_1,\gamma_2,\dots,\gamma_n)$ be a path along $\fR$.

We claim that $\delta(\gamma_1\gamma_2\cdots\gamma_n,y_0)$ does not end at $y_0$, i.e. if the walker traverses $\gamma_1\gamma_2\cdots\gamma_n$ then the imitator does not end up back at $y_0$.
Indeed, as the walker traverses the first segment $\gamma_1$, the imitator also traverses $\gamma_1$ (since $\gamma_1$ is in $Y_1$).
Recall that property (Trap) means that, for any vertex $y\in Y_1\cap Y_2$ and edge $e$ in $Y_1$ incident to $y$ whose interior lies outside $Y_2$, the hyperplane $H=H(e)$ is disjoint from $Y_2,Y_3,\dots,Y_{n-1}$.
As the walker traverses $\gamma_2\gamma_3\cdots\gamma_{n-1}$, the only way the imitator can leave $Y_2$ is by traversing an edge $e$ as above, and this can only happen if the walker traverses an edge parallel to $e$.
So it follows from property (Trap) that the imitator stays trapped in $Y_1\cap Y_2$ as the walker traverses $\gamma_2\gamma_3\cdots\gamma_{n-1}$.
Since $Y_2\cap Y_n=\emptyset$, the imitator is outside $Y_n$ when the walker reaches the end of $\gamma_2\gamma_3\cdots\gamma_{n-1}$.
As the walker traverses the final segment $\gamma_n$ (which is in $Y_n$), the imitator must stay outside of $Y_n$ because of Subcomplex Entrapment and the fact that $Y_n$ does not inter-osculate with hyperplanes.
As $y_0=y_n\in Y_n$, we conclude that the imitator does not end up back at $y_0$ when the walker reaches the end of $\gamma_n$. This proves the claim.

By Remark \ref{remk:notclosedlift}, the lift of $\gamma_1\gamma_2\cdots\gamma_n$ to $\dot{X}$ based at $\dot{y}_0$ does not close up as a loop.
Now let $\hat{X}\to X$ be a finite regular cover that factors through $\dot{X}$.
Observe that $\gamma_1\gamma_2\cdots\gamma_n$ (and hence $\gamma$) has no closed lift to $\hat{X}$.
This works for any choice of $\gamma$, so no path along $\fR$ has a closed lift to $\hat{X}$.
Finally, it follows from Lemma \ref{lem:routeelev}\ref{item:elevtopath} that $\hat{X}$ has no closed elevations of $\fR$.
\end{proof}

We can now prove Theorem \ref{thm:embroutes}.

\theoremstyle{plain}
\newtheorem*{thm:embroutes}{Theorem \ref{thm:embroutes}}
\begin{thm:embroutes}
	Let $X$ be a finite directly special cube complex and let $n\geq1$ be an integer.
	For any essential embedded closed route $\fR$ in $X$ of length $n$, there exists a finite cover $\hat{X}\to X$ with no closed elevations of $\fR$. 
\end{thm:embroutes}
\begin{proof}
We induct on $n$.
The case $n\leq3$ follows from the separability of products of 3 convex-cocompact subgroups in $\pi_1(X)$ \cite[Proposition 4.13]{Shepherd23} together with Proposition \ref{prop:routesep}.
Now let $n\geq4$, and assume that the theorem holds for routes of length less than $n$.
Let $\fR$ be an essential embedded closed route in $X$ of length $n$.
By Lemma \ref{lem:inductHyp}, we may assume that $\fR$ satisfies property (Hyp$_j$) for all $2\leq j\leq n-1$.

It suffices to find a finite cover $\hat{X}\to X$ such that each closed elevation $$\hat{\fR}=(\hat{y}_0,\hat{Y}_1,\hat{y}_1,\hat{Y}_2,\hat{y}_2,\dots,\hat{Y}_n,\hat{y}_n)$$ of $\fR$ to $\hat{X}$ satisfies the three assumptions of Lemma \ref{lem:routetrap}, as then we can apply Lemma \ref{lem:routetrap} to each of these elevations $\hat{\fR}$ to obtain a further finite cover of $X$ with no closed elevations of $\fR$ (using Lemma \ref{lem:factorarg}). 
By Lemma \ref{lem:propTrap}, there exists a finite cover $\hat{X}_1\to X$ such that each closed elevation of $\fR$ satisfies property (Trap).
Proposition \ref{prop:routeembed} gives us a finite cover $\hat{X}_2\to X$ such that each closed elevation $\hat{\fR}$ of $\fR$ has $\hat{Y}_n$ not inter-osculating with hyperplanes.
And Lemma \ref{lem:2intn} provides a finite cover $\hat{X}_3\to X$ such that each closed elevation $\hat{\fR}$ of $\fR$ satisfies $\hat{Y}_2\cap\hat{Y}_n=\emptyset$.
Finally, choosing $\hat{X}$ to be a finite cover of $X$ that factors through $\hat{X}_1$, $\hat{X}_2$ and $\hat{X}_3$, we ensure that all closed elevations of $\fR$ to $\hat{X}$ satisfy all three of the assumptions of Lemma \ref{lem:routetrap}, as required (it is straightforward to check that these three properties of routes pass to elevations).
\end{proof}

\bigskip
\section{Proof of Theorem \ref{thm:prodsep}}\label{sec:proof}

The goal of this section is to prove Theorems \ref{thm:prodsep} and \ref{thm:routesext}.
First, we weaken the assumptions of Theorem \ref{thm:embroutes} so that $X$ is virtually special instead of directly special, and so that the route $\fR$ is finite instead of embedded.

\begin{thm}\label{thm:routes}
	Let $X$ be a finite virtually special cube complex.
	For any finite essential closed route $\fR$ in $X$, there exists a finite cover $\hat{X}\to X$ with no closed elevations of $\fR$. 
\end{thm}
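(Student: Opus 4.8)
The plan is to reduce Theorem \ref{thm:routes} to Theorem \ref{thm:embroutes} in two steps, corresponding to the two weakenings of the hypotheses. First I would pass from virtually special to directly special, and then from an arbitrary finite route to an embedded one.

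\textbf{Step 1: Passing to a directly special cover.} Since $X$ is finite and virtually special, by Remark \ref{remk:special} it has a finite cover $X'\to X$ that is directly special. By Lemma \ref{lem:factorarg}, it suffices to show that for each closed elevation $\fR'$ of $\fR$ to $X'$ there is a finite cover $\hat{X}_{\fR'}\to X'$ with no closed elevations of $\fR'$. Each such $\fR'$ is a finite route in the finite directly special cube complex $X'$, and it is essential by Lemma \ref{lem:routeelev}\ref{item:esselev} (a closed elevation of an essential closed route is essential). So we have reduced to the case where $X$ itself is directly special, but $\fR$ is still only finite, not necessarily embedded.

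\textbf{Step 2: Passing to an embedded route.} Now $X$ is finite directly special and $\fR$ is a finite essential closed route. Apply Proposition \ref{prop:routeembed} to obtain a finite cover $X''\to X$ in which every elevation of $\fR$ is embedded (and whose subcomplexes don't inter-osculate with hyperplanes, though we won't need that second property here). Again invoke Lemma \ref{lem:factorarg}: it suffices to find, for each closed elevation $\fR''$ of $\fR$ to $X''$, a finite cover $\hat{X}_{\fR''}\to X''$ with no closed elevations of $\fR''$. Each $\fR''$ is an essential (by Lemma \ref{lem:routeelev}\ref{item:esselev}) embedded (by choice of $X''$, or Lemma \ref{lem:elevelev}(1)) closed route in the finite directly special cube complex $X''$, so Theorem \ref{thm:embroutes} applies and produces the desired cover. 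Combining via Lemma \ref{lem:factorarg} gives a finite cover $\hat{X}\to X''$, hence $\hat{X}\to X$, with no closed elevations of $\fR$.

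There is no real obstacle here: the argument is a routine two-stage application of the "factor argument" Lemma \ref{lem:factorarg}, using Remark \ref{remk:special} and Proposition \ref{prop:routeembed} to supply the intermediate covers, and Lemma \ref{lem:routeelev}\ref{item:esselev} to propagate essentialness down through covers. The only point requiring a moment's care is checking that essentialness, embeddedness, and closedness all behave correctly under elevation — but these are exactly the properties recorded in Lemmas \ref{lem:routeelev} and \ref{lem:elevelev}, so each step is immediate. The entire content of the theorem is already contained in Theorem \ref{thm:embroutes}; this statement merely repackages it in a form where the hypotheses match the setting of Proposition \ref{prop:routesep} (and ultimately Theorem \ref{thm:prodsep}).
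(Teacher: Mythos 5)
Your proof is correct and follows essentially the same route as the paper: both reduce Theorem \ref{thm:routes} to Theorem \ref{thm:embroutes} via Proposition \ref{prop:routeembed} and Lemma \ref{lem:factorarg}, propagating essentialness with Lemma \ref{lem:routeelev}\ref{item:esselev}. The only small gap is that in your Step 2 you treat the cover $X''$ produced by Proposition \ref{prop:routeembed} as directly special without justification (the statement of that proposition does not assert this); this is easily repaired by noting that the cover produced by the underlying Proposition \ref{prop:elevembed} is already directly special, or by passing to a further directly special cover (as the paper does), using Lemma \ref{lem:elevelev}(1) and Lemma \ref{lem:routeelev}\ref{item:esselev} to keep elevations embedded and essential.
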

\begin{proof}
	Let $\fR$ be a finite essential closed route in $X$.	
	By Proposition \ref{prop:routeembed}, there is a finite cover $X'\to X$ where every elevation of $\fR$ is embedded.
	The closed elevations of $\fR$ to $X'$ will also be essential by Lemma \ref{lem:routeelev}\ref{item:esselev}.
	Passing to a further finite cover if necessary, we may assume that $X'$ is directly special.
	Now apply Theorem \ref{thm:embroutes} to each closed elevation of $\fR$ to $X'$.
	By Lemma \ref{lem:factorarg}, we produce a finite cover $\hat{X}\to X$ with no closed elevations of $\fR$.
\end{proof}

We can now prove Theorem \ref{thm:routesext}, which further weakens the assumptions of Theorem \ref{thm:routes} by removing the assumption that $X$ is finite.

\begin{thm}\label{thm:routesext}
	Let $X$ be a virtually special cube complex.
	For any finite essential closed route $\fR$ in $X$, there exists a finite-sheeted cover $\hat{X}\to X$ with no closed elevations of $\fR$. 
\end{thm}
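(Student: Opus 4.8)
The plan is to reduce Theorem~\ref{thm:routesext} to Theorem~\ref{thm:routes} by restricting to a suitable finite subcomplex of $X$ on which the route $\fR$ lives. The point is that $\fR$ is finite, so all the data of $\fR$ — the finitely many finite complexes $Y_i$ together with the maps $Y_i\to X$ — only involves a compact part of $X$. First I would let $X_0\subset X$ be the (finite) subcomplex consisting of all cubes of $X$ that meet the image of some $Y_i$, and then enlarge $X_0$ to a finite subcomplex $X_1$ that is $\pi_1$-surjective-free enough; more precisely, I would take a finite connected non-positively curved subcomplex $X_1\subseteq X$ containing all the images of the $Y_i$ and containing the basepoint $y_0$, and such that each map $Y_i\to X$ factors through a local isometry $Y_i\to X_1$. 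Since being a local isometry is a local condition on links, and the links of $X_1$ at vertices in the image of $Y_i$ agree with those of $X$ once $X_1$ contains the relevant cubes, one can arrange this. Thus $\fR$ becomes a finite essential closed route $\fR_1$ in $X_1$ (essentialness is inherited because a null-homotopy of a realization in $X_1$ is in particular a null-homotopy in $X$ — we need the other direction, so see below).

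The main obstacle is that passing to a subcomplex can \emph{destroy} essentialness: a realization $\gamma$ of a path along $\fR$ might be essential in $X$ but null-homotopic in the smaller complex $X_1$ — no wait, it is the reverse that is dangerous. A loop that is null-homotopic in $X$ need not be null-homotopic in $X_1$, which is harmless; but a loop essential in $X_1$ could become trivial in $X$. Actually for our purposes $\fR$ essential in $X$ gives: every realization of a path along $\fR$ is essential in $X$, hence (a fortiori, since $\pi_1(X_1)\to\pi_1(X)$ need not be injective) we cannot immediately conclude it is essential in $X_1$. To fix this, I would instead use that $X_1$ can be chosen so that the inclusion $X_1\hookrightarrow X$ is $\pi_1$-injective — for instance by taking $X_1$ to be locally convex, e.g. a cubical neighborhood of the convex hull in $\wt X$ of the relevant lifts, pushed down; any locally convex subcomplex of a non-positively curved cube complex is $\pi_1$-injective by Remark~\ref{remk:isembedding} applied to the universal cover. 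With $X_1\hookrightarrow X$ locally convex and $\pi_1$-injective, a realization of a path along $\fR$ that is essential in $X$ and lies in $X_1$ is automatically essential in $X_1$; conversely an essential loop in $X_1$ stays essential in $X$. So $\fR_1$ is a finite essential closed route in $X_1$.

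Next I would verify that $X_1$ is virtually special: since $X_1$ is locally convex in $X$, the inclusion is a local isometry, so composing with a local isometry from a finite cover witnessing virtual specialness of $X$ — hmm, virtual specialness passes to locally convex subcomplexes because a local isometry $X_1\to X\to X_\Gamma$ (after passing to a finite special cover) exhibits a finite special cover of $X_1$; concretely, pull back the finite special cover $X'\to X$ to $X_1$ to get a finite cover $X_1'\to X_1$ which admits a local isometry into the Salvetti complex $X_\Gamma$, hence is special. Crucially $X_1$ has only finitely many hyperplanes (it is finite), and, being virtually special, Theorem~\ref{thm:routes} applies: there is a finite-sheeted cover $\hat X_1\to X_1$ with no closed elevations of $\fR_1$.

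Finally I would promote the cover $\hat X_1\to X_1$ to a finite-sheeted cover $\hat X\to X$. Let $H\lneq \pi_1(X_1)$ be the finite-index subgroup corresponding to $\hat X_1$, and note $\pi_1(X_1)\hookrightarrow \pi_1(X)=:G$ is injective. Using Lemma~\ref{lem:realizations} (applied in $X$, with a fixed path $\gamma$ along $\fR$), the set of homotopy classes of realizations of paths along $\fR$ is a coset-product $K_1K_2\cdots K_n[\gamma]$ in $G$, where each $K_i$ lies in $\pi_1(X_1)\le G$; and no closed elevations of $\fR_1$ to $\hat X_1$ means $1\notin H\,K_1K_2\cdots K_n[\gamma]$ inside $\pi_1(X_1)$. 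I want a finite-index subgroup $\hat G\le G$ with $1\notin \hat G\,K_1K_2\cdots K_n[\gamma]$. This is exactly where separability enters: each $K_i$ is a convex-cocompact subgroup of $\pi_1(X_1)$, hence of $G$ (its fixed convex subcomplex in $\wt X_1\subset\wt X$ is still convex and cocompact since $X_1$ is locally convex), so by Theorem~\ref{thm:prodsep} — already proved via Theorem~\ref{thm:routes} and Proposition~\ref{prop:routesep} for finite $X_1$, wait, we need it for $G=\pi_1(X)$ with $X$ possibly infinite. Hmm: actually the cleanest route avoids invoking Theorem~\ref{thm:prodsep} and instead argues directly: the cover $\hat X_1\to X_1$ extends to a finite cover of $X$ because one can form the cover of $X$ corresponding to the preimage of $H$ under $G\to$ (permutations of the $H$-cosets) — but $\pi_1(X_1)$ need not have finite index in $G$. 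To handle this, I would instead use that $X_1\hookrightarrow X$ is locally convex, so by \cite{Scott78}-type arguments (or by building the cover of $X$ that ``agrees with'' $\hat X_1$ over $X_1$ and is trivial elsewhere — glue $\hat X_1$ together with copies of $X\setminus X_1$ along the boundary using the covering $\hat X_1\to X_1$), one obtains a finite-sheeted cover $\hat X\to X$ whose restriction over $X_1$ contains $\hat X_1$ as a subcomplex; then any closed elevation of $\fR$ to $\hat X$ would lie over $X_1$ (since $\fR$ does) and restrict to a closed elevation of $\fR_1$ to $\hat X_1$, a contradiction. The hard part is making this gluing construction precise and checking the resulting space is a genuine finite cover of $X$; this is the ``extension of covers of a locally convex subcomplex'' technique, and I would cite \cite{HaglundWise08} / canonical-completion-style results, or alternatively deduce it from Lemma~\ref{lem:sepequiv} together with separability of convex-cocompact subgroups (Haglund--Wise) applied inside $G$ to the $K_i$ — noting that the product $K_1\cdots K_n$ being separable in $G$ is precisely the content we can bootstrap, so in practice this theorem is proved simultaneously with, or as an immediate corollary of, Theorem~\ref{thm:prodsep} via Proposition~\ref{prop:routesep} once one knows $X$ may be replaced by the finite $X_1$.
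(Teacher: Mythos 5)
Your proposal takes a genuinely different route from the paper's, and the difference matters: the paper never tries to find a finite locally convex subcomplex of $X$. Instead it fixes a local isometry $X\to X_\Gamma$ to a Salvetti complex, notes that the compact images of the $Y_i$ land in $X_\Lambda$ for some finite subgraph $\Lambda\subset\Gamma$, and precomposes with the retraction $X_\Gamma\to X_\Lambda$ (collapsing edges labelled by $\Gamma-\Lambda$) to get a map $X\to X_\Lambda$ to a \emph{finite} cube complex under which each $Y_i\to X\to X_\Lambda$ is still a local isometry. The route $\fR$ then descends to a route $\fR'$ in $X_\Lambda$; essentialness of $\fR'$ is checked using $\pi_1$-injectivity of the local isometries $X\to X_\Gamma$ and $X_\Lambda\hookrightarrow X_\Gamma$; Theorem~\ref{thm:routes} gives a finite cover $\hat X_\Lambda\to X_\Lambda$; and one \emph{pulls back} along $X\to X_\Lambda$ to get $\hat X\to X$. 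The structural point that makes this work is that the pullback of a finite-sheeted covering map is a finite-sheeted covering map whether or not $X\to X_\Lambda$ is a local isometry, so the cover-extension problem you run into never arises.

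Your proposal has two gaps. First, you do not establish that a finite locally convex subcomplex $X_1\subset X$ containing the images of all the $Y_i$ exists. The construction you suggest (take $\Hull\bigl(\bigcup_i\wt Y_i\bigr)$ in $\wt X$ and push down) does not yield an embedded, let alone locally convex, subcomplex of $X$: for any elevation $\wt\fR$ of $\fR$ to $\wt X$ (necessarily non-closed, since $\fR$ is essential), $\ti y_0\neq\ti y_n$ both lie in the hull yet descend to the single vertex $y_0=y_n$ of $X$, so the hull fails to embed. More generally, the smallest locally convex subcomplex of $X$ containing a given finite subcomplex need not be finite; controlling this requires something in the spirit of Lemma~\ref{lem:vspeccocomp}, which is set up for a $G$-invariant, $G$-cocompact convex subcomplex and does not directly apply here.

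The second and more serious gap is the one you flagged: extending $\hat X_1\to X_1$ to a finite-sheeted cover $\hat X\to X$. The ad hoc gluing you sketch does not produce a covering space — there is no canonical way to match copies of $X\setminus X_1$ along the frontier of $X_1$ compatibly with the covering projection. The alternative you suggest, separability of $K_1K_2\cdots K_n$ in $G=\pi_1(X)$, is exactly Theorem~\ref{thm:prodsep}, which the paper deduces \emph{from} Theorem~\ref{thm:routesext}, so that is circular; separability of each single $K_i$ (Haglund--Wise) is strictly weaker and does not give the product statement; and separability of the product inside $\pi_1(X_1)$ (available from Theorem~\ref{thm:routes} and Proposition~\ref{prop:routesep} for finite $X_1$) does not transfer to $G$ because $\pi_1(X_1)$ has infinite index in $G$. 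This cover-extension step is not a technicality to be filled in later; it is the central difficulty, and the retraction-and-pullback argument in the paper is precisely the device that avoids it.
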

\begin{proof}
By Lemma \ref{lem:factorarg}, we may assume that $X$ is special.
Thus there exists a local isometry $X\to X_\Gamma$ to the Salvetti complex of a right-angled Artin group.
Let $\fR=(y_0,Y_1,y_1,Y_2,y_2,\dots,Y_n,y_n)$ be a finite essential closed route in $X$.
Each $Y_i$ has compact image in $X_\Gamma$, hence there is a finite subgraph $\La\subset\Gamma$ such that the images of the $Y_i$ are all contained in the natural subcomplex $X_\La\subset X_\Gamma$.
There is also a natural retraction $X_\Gamma\to X_\La$ by collapsing all edges labelled by vertices in $\Gamma-\La$.
This gives us a map $X\to X_\La$ such that each composition $Y_i\to X\to X_\La$ is a local isometry.
As a result, the route $\fR$ induces a finite closed route $\fR'$ in $X_\La$.

We claim that $\fR'$ is essential.
Indeed, any path $\delta'$ along $\fR'$ lifts to a path $\delta$ along $\fR$ (lift each segment of $\delta'$ to the corresponding $Y_i$, then project it to $X$).
The route $\fR$ is essential, so $\delta$ is essential. Since $X\to X_\Gamma$ is a local isometry, the image of $\delta$ in $X_\Gamma$ is essential, and this is the same as the image of $\delta'$ under the inclusion map $X_\La\xhookrightarrow{} X_\Gamma$.
It follows that $\delta'$ is essential, as required.

Now apply Theorem \ref{thm:routes} to $\fR'$ and $X_\La$, producing a finite cover $\hat{X}_\La\to X_\La$ with no closed elevations of $\fR'$.
Let $\hat{X}$ be a component of the pullback of the maps $X\to X_\La$ and $\hat{X}_\La\to X_\La$.
This is similar to the construction of an elevation, except that $X\to X_\La$ is not necessarily a local isometry.
As a result, the map $\hat{X}\to\hat{X}_\La$ might not be a local isometry either, but the map $\hat{X}\to X$ will be a finite-sheeted covering (since $\hat{X}_\La\to X_\La$ is a finite-sheeted covering).

Now let $\hat{Y}_i\to\hat{X}$ be an elevation of $Y_i\to X$. This yields the following commutative diagram:

\begin{equation}\label{doublepullback}
	\begin{tikzcd}[
		ar symbol/.style = {draw=none,"#1" description,sloped},
		isomorphic/.style = {ar symbol={\cong}},
		equals/.style = {ar symbol={=}},
		subset/.style = {ar symbol={\subset}}
		]
		\hat{Y}_i\ar{d}\ar{r}&\hat{X}\ar{d}\ar{r}&\hat{X}_\La\ar{d}\\
		Y_i\ar{r}&X\ar{r}&X_\La
	\end{tikzcd}
\end{equation}
The left and right hand squares in (\ref{doublepullback}) are both pullback diagrams (restricted to one component), so they compose to give a third pullback diagram, i.e. $\hat{Y}_i$ is a component of the pullback of $Y_i\to X_\La$ and $\hat{X}_\La\to X_\La$.
As $Y_i\to X_\La$ is a local isometry, the map $\hat{Y}_i\to \hat{X}_\La$ is an elevation of $Y_i\to X_\La$.
It follows that any elevation $\hat{\fR}$ of $\fR$ to $\hat{X}$ induces an elevation $\hat{\fR}'$ of $\fR'$ to $\hat{X}_\La$.
Moreover, if $\hat{\fR}$ is closed, then its initial and terminal vertices are the same vertex in $\hat{X}$, so they map to the same vertex in $\hat{X}_\La$, hence $\hat{\fR}'$ is also closed.
By construction, $\hat{X}_\La$ has no closed elevations of $\fR'$, therefore $\hat{X}$ has no closed elevations of $\fR$.
\end{proof}

It is now easy to deduce Theorem \ref{thm:prodsep}.

\theoremstyle{plain}
\newtheorem*{thm:prodsep}{Theorem \ref{thm:prodsep}}
\begin{thm:prodsep}
	Let $G\acts \wt{X}$ be a virtually special action of a group $G$ on a CAT(0) cube complex $\wt{X}$, and let $K_1,K_2,\dots,K_n<G$ be convex-cocompact subgroups. Then the product $K_1K_2\cdots K_n$ is separable in $G$.
\end{thm:prodsep}
\begin{proof}
As the action is virtually special, there is a finite-index subgroup $G'<G$ acting freely on $\wt{X}$, with the quotient $\wt{X}/G'$ a special cube complex. Passing to the normal core, we may assume that $G'$ is normal in $G$.
By Lemma \ref{lem:sepfiniteindex}, we may assume that $G$ is equal to $G'$.
The result then follows from Proposition \ref{prop:routesep} and Theorem \ref{thm:routesext}.
\end{proof}

\bigskip
\section{Separability with respect to group actions}\label{sec:actions}

In this section we prove Proposition \ref{prop:ballsep} and Corollary \ref{cor:contact} regarding separable subsets in group actions.

Recall that, given an action of a group $G$ on a set $A$, we say that a subset $B\subset A$ is \emph{$G$-separable} if for any $a\in A-B$ there exists a finite-index subgroup $\hat{G}<G$ such that $\hat{G}\cdot a \cap B=\emptyset$.
In the introduction we already discussed the connection between separability of subsets of $G$ and $G$-separability of subsets of $A$ in some special cases.
The following lemma illuminates this connection in a more general setting.

\begin{lem}\label{lem:Bsep}
	Suppose $G$ acts on a set $A$ with finitely many orbits, and let $B\subset A$.
	Then $B$ is $G$-separable if and only if it can be written as a finite union $B=\cup_i S_i\cdot b_i$ with $b_i\in B$, $S_i\subset G$ separable, and $S_i G_{b_i}=S_i$.
\end{lem}
\begin{proof}
$\implies$: First suppose that $B$ is $G$-separable. Let $\{b_i\}\subset B$ be a collection of representatives for the $G$-orbits of $A$ that intersect $B$.  We assumed that $A$ has finitely many $G$-orbits, so the collection $\{b_i\}$ is finite. For each $b_i$, define
$$S_i=\{g\in G\mid gb_i\in B\}.$$
Clearly we can write $B$ as a finite union $B=\cup_i S_i\cdot b_i$, and $S_i G_{b_i}=S_i$ for each $i$.
It remains to show that each $S_i$ is separable in $G$.
Let $g\in G-S_i$.
By definition, $gb_i\notin B$.
We assumed that $B$ is $G$-separable, so there exists a finite-index subgroup $\hat{G}<G$ such that $\hat{G}\cdot gb_i\cap B=\emptyset$.
It follows that $\hat{G}g\cap S_i=\emptyset$, thus $S_i$ is separable (using the version of separability from Lemma \ref{lem:sepequiv}\ref{it:sep3}).

$\impliedby$: Now suppose that $B$ can be written as a finite union $B=\cup_i S_i\cdot b_i$ with $b_i\in B$, $S_i\subset G$ separable, and $S_i G_{b_i}=S_i$.
Let $a\in A-B$, and pick one of the elements $b_i$.

First suppose that $a$ is in the same $G$-orbit as $b_i$, so we may write $a=g_ib_i$ for some $g_i\in G$.
Then $g_i\notin S_i$, so, by separability of $S_i$, there exists a finite-index subgroup $\hat{G}_i<G$ such that $\hat{G}_ig_i\cap S_i=\emptyset$.
Since $S_iG_{b_i}=S_i$, it follows that $\hat{G}_i\cdot a\cap S_i\cdot b_i=\emptyset$.
If on the other hand $a$ is in a different $G$-orbit to $b_i$, then we can simply put $\hat{G}_i=G$ and we still have $\hat{G}_i\cdot a\cap S_i\cdot b_i=\emptyset$.

Finally, define $\hat{G}=\cap_i\hat{G}_i$, which is again a finite-index subgroup of $G$.
As $B=\cup_i S_i\cdot b_i$ and $\hat{G}_i\cdot a\cap S_i\cdot b_i=\emptyset$ for each $i$, we have $\hat{G}\cdot a\cap B=\emptyset$.
Thus $B$ is $G$-separable.
\end{proof}

We now use this lemma to prove Proposition \ref{prop:ballsep}, which explains how the separability of certain products of subgroups in $G$ relates to the $G$-separability of balls in graphs that admit a cocompact $G$-action.

\theoremstyle{plain}
\newtheorem*{prop:ballsep}{Proposition \ref{prop:ballsep}}
\begin{prop:ballsep}
	Let $G$ act cocompactly on a graph $X$. Suppose that any product of vertex stabilisers is separable in $G$.
	Then for any $x\in VX$ and any integer $R\geq0$, the ball
	$$B_R(x)=\{y\in VX\mid d(x,y)\leq R\}$$
	is $G$-separable.
\end{prop:ballsep}
\begin{proof}
	For each $y\in B_{R-1}(x)$, let $N_y$ be a set of $G_y$-orbit representatives for the neighbours of $y$.
	Since $G$ acts cocompactly on $X$, we see that each $N_y$ is finite.
	Now let
	$$\Omega=\{(x=x_0,x_1,x_2,\dots,x_r)\mid 0\leq r\leq R,\, x_{i+1}\in N_{x_i}\}.$$
	Note that $\Omega$ is also finite.
	We claim that
	\begin{equation}\label{BRx}
		B_R(x)=\bigcup_{(x_0,\dots,x_r)\in\Omega} G_{x_0}G_{x_1}\cdots G_{x_r}\cdot x_r.
	\end{equation}
The inclusion $\supset$ is clear.
We now prove $\subset$, so let $y\in B_R(x)$.
Let $(x=y_0,y_1,y_2,\dots,y_r=y)$ be the vertices on a shortest path from $x$ to $y$ in $X$ (so $0\leq r\leq R$).
Inductively define vertices $x=x_0,x_1,x_2,\dots,x_r$ and elements $g_i\in G_{x_{i}}$ such that $g_ig_{i-1}\cdots g_0y_{i+1}=x_{i+1}\in N_{x_i}$ ($0\leq i\leq r-1$).
Indeed, for the base case we just set $x_0=x$.
For the inductive step, suppose we have already chosen $x_0,\dots,x_i$ and $g_0,\dots,g_{i-1}$ with the required properties ($0\leq i\leq r-1$).
So $g_{i-1}g_{i-2}\cdots g_0y_i=x_i\in B_{R-1}(x)$ is a neighbour of $g_{i-1}g_{i-2}\cdots g_0 y_{i+1}$.
Then by definition of $N_{x_i}$, there is $g_i\in G_{x_i}$ such that $g_ig_{i-1}\cdots g_0y_{i+1}\in N_{x_i}$, and we define $x_{i+1}=g_ig_{i-1}\cdots g_0y_{i+1}$.
This completes the inductive step.
By construction, $(x_0,x_1,\dots,x_r)\in\Omega$ and $x_r=g_{r-1}g_{r-2}\cdots g_0y_r$, hence
$$y=y_r=g_0g_1\cdots g_{r-1}x_r\in G_{x_0}G_{x_1}\cdots G_{x_r}\cdot x_r$$
is an element of the right-hand side of (\ref{BRx}).
This completes the proof of (\ref{BRx}). 

Finally, we apply Lemma \ref{lem:Bsep} with $A=VX$, $B=B_R(x)$ and the finite union $B=\cup_i S_i\cdot b_i$ being the union in (\ref{BRx}), noting that each $S_i$ is a product of vertex stabilisers so is separable in $G$ by assumption of the proposition.
\end{proof}

Combining Proposition \ref{prop:ballsep} with Theorem \ref{thm:prodsep} allows us to prove separability results for a wide range of actions of virtually special cubulated groups on graphs.
The following corollary provides some notable examples.

\theoremstyle{plain}
\newtheorem*{cor:contact}{Corollary \ref{cor:contact}}
\begin{cor:contact}
	Let $G\acts X$ be one of the following actions:
	\begin{enumerate}
		\item The action of a virtually special cubulated group on its contact graph \cite{Hagen14,BehrstockHagenSisto17}.
		\item The action of a finitely generated right-angled Artin group on its extension graph $\Gamma^e$ or clique graph $\Gamma^e_k$ \cite{KimKoberda13}.
		\item The action of a one-ended hyperbolic cubulated group on its canonical JSJ tree \cite{Bowditch98}.
		\item The action of a virtually special cubulated group on its coned-off Cayley graph with respect to a given convex-cocompact subgroup and finite generating set \cite{Farb98} (we make no assumption of relative hyperbolicity here).
	\end{enumerate}
	Then for any $x\in VX$ and any integer $R\geq0$, the ball $B_R(x)$ is $G$-separable.
\end{cor:contact}
\begin{proof}
	The result follows from Theorem \ref{thm:prodsep} and Proposition \ref{prop:ballsep} once we know that the action $G\acts X$ is cocompact with convex-compact vertex stabilisers.
	
	In part \ref{it:contact}, each vertex stabiliser for $G\acts X$ is a hyperplane stabiliser with respect to the cubulation of $G$, hence convex-cocompact. Edges in $X$ correspond to intersections or osculations of hyperplanes, so the cocompactness of $G\acts X$ follows from the cocompactness of the cubulation of $G$.
	
	In part \ref{it:extension}, first consider the action of a finitely generated right-angled Artin group $G=A_\Gamma$ on its extension graph $X=\Gamma^e$.
	The vertices of $\Gamma^e$ correspond to conjugates of the standard generators of $A_\Gamma$, and the action is by conjugation.
	For a generator $v\in V\Gamma$, the stabiliser of $v$ as a vertex of $\Gamma^e$ is equal to the centraliser of $v$ as an element of $A_\Gamma$, which is equal to the subgroup of $A_\Gamma$ generated by the standard generators that commute with $v$, which is a convex-compact subgroup (with respect to the standard cubulation of $A_\Gamma$). Hence $A_\Gamma\acts \Gamma^e$ has convex-cocompact vertex stabilisers.
	By \cite[Proposition 7.1]{Koberda13}, each edge of $\Gamma^e$ is in the same $A_\Gamma$-orbit as an edge of the form $(u,v)$, with $u,v\in V\Gamma$ commuting generators of $A_\Gamma$, therefore the action of $A_\Gamma$ on $\Gamma^e$ is cocompact.
	In a similar vein, one can show that each clique of $\Gamma^e$ is in the same $A_\Gamma$-orbit as a clique spanned by pairwise commuting standard generators of $A_\Gamma$. It follows that the action of $A_\Gamma$ on the clique graph $\Gamma^e_k$ is cocompact and has convex-cocompact vertex stabilisers.
	
	In part \ref{it:JSJ}, the vertex stabilisers of $G\acts X$ are quasiconvex, hence convex-compact with respect to any cubulation of $G$ by \cite[Proposition 7.2]{HaglundWise08}. Moreover, $G$ acts on $X$ cocompactly by construction.
	Also note that all cubulations of $G$ are virtually special because $G$ is hyperbolic \cite{Agol13}.
	
	Finally, in part \ref{it:coned}, each vertex stabiliser is either trivial or a conjugate of the given convex-compact subgroup, and the action is cocompact by construction.
\end{proof}

\bibliographystyle{alpha}
\bibliography{Ref}

\end{document}